\documentclass{amsart}
\usepackage[utf8]{inputenc}
\usepackage{amsmath}
\usepackage{amssymb}
\usepackage{mathrsfs}
\usepackage{graphicx}

\usepackage[colorlinks]{hyperref}
\usepackage[svgnames, dvipsnames, usenames]{xcolor}
\hypersetup{urlcolor = RedViolet, linkcolor = RoyalBlue, citecolor = ForestGreen}

\usepackage{partmac}
\usepackage{graphmor}

\def\ppen{\penalty300 }
\let\col=\colon
\def\colon{\col\ppen}

\theoremstyle{plain} 
\newtheorem{thm}{Theorem}[section]
\newtheorem{prop}[thm]{Proposition}
\newtheorem{lem}[thm]{Lemma}
\newtheorem{cor}[thm]{Corollary}
\theoremstyle{definition}
\newtheorem{defn}[thm]{Definition}
\newtheorem{nota}[thm]{Notation}
\newtheorem{rem}[thm]{Remark}
\newtheorem{ex}[thm]{Example}
\numberwithin{equation}{section}

\theoremstyle{plain}

\newenvironment{customthm}[1]
  {\innercustomthm}
  {\endinnercustomthm}

\renewcommand{\theta}{\vartheta}
\renewcommand{\phi}{\varphi}
\renewcommand{\epsilon}{\varepsilon}
\renewcommand{\subset}{\subseteq}
\renewcommand{\supset}{\supseteq}

\newcommand{\N}{\mathbb N}
\newcommand{\Z}{\mathbb Z}

\newcommand{\C}{\mathbb C}
\newcommand{\GGr}{\mathbb G}
\newcommand{\HGr}{\mathbb H}

\DeclareMathOperator{\Mor}{Mor}

\DeclareMathOperator{\spanlin}{span}
\DeclareMathOperator{\Lrot}{Lrot}
\DeclareMathOperator{\Rrot}{Rrot}

\DeclareMathOperator{\id}{id}
\DeclareMathOperator{\diag}{diag}
\DeclareMathOperator{\Aut}{Aut}
\DeclareMathOperator{\End}{End}
\newcommand{\Cat}{\mathscr{C}}
\newcommand{\RCat}{\mathfrak{C}}
\newcommand{\Gat}{\mathscr{G}}

\newcommand{\GG}{\mathbf{G}}
\newcommand{\KG}{\mathbf{K}}
\newcommand{\HG}{\mathbf{H}}
\newcommand{\MG}{\mathbf{M}}

\newcommand{\nullG}{\mathbf{0}}

\newcommand{\grpth}{^{\mathrm{grp}\mathchar `\-\mathrm{th}}}
\newcommand{\staralg}{\mathop{\rm\ast\mathchar `\-alg}}
\newcommand{\llangle}{\langle\!\langle}
\newcommand{\rrangle}{\rangle\!\rangle}

\newcommand{\PG}{\mathbf{P}}
\newcommand{\QG}{\mathbf{Q}}

\newcommand{\Lin}{\mathscr{L}}
\newcommand{\Part}{\mathscr{P}}

\newcommand{\aw}{\mathbf{a}}
\newcommand{\bw}{\mathbf{b}}
\newcommand{\cw}{\mathbf{c}}
\newcommand{\dw}{\mathbf{d}}
\newcommand{\iw}{\mathbf{i}}
\newcommand{\jw}{\mathbf{j}}
\newcommand{\vw}{\mathbf{v}}

\newcommand{\la}{\mathsf{a}}

\newcommand{\lv}{\mathsf{v}}

\newcommand{\Gedge}{\Graph{
\GV 0.5:1,2;
\GS 0.5/0.5:1/2;
}}

\newcommand{\Gtriangle}{\Graph{
\GV 0.2:1,2;
\GV 0.8:1.5;
\GS 0.2/0.2:1/2;
\GS 0.2/0.8:1/1.5,2/1.5;
}}

\def\Gforkk{\Graph{
\GS 0.3/0.5:1/1.5,1.5/,2/1.5;
\GS 0.5/1:1.5/;
\GS 0.3/0:1/,1.5/,2/;
\GV 0.5:1.5;
}}

\newcommand{\Gabab}{\Graph{
\GE 1/1:2/3;
\GS 1/0:2/1,2/3,3/2,3/4;
\GV 1:2,3;
}}

\newcommand{\GLabc}{\Graph{
\GV 0.5:1,2,3;
\GS 0.5/0:1/,2/,3/;
}}

\begin{document}
\title{Group-theoretical graph categories}
\author{Daniel Gromada}
\address{Saarland University, Fachbereich Mathematik, Postfach 151150,
66041 Saarbr\"ucken, Germany}
\email{gromada@math.uni-sb.de}
\date{\today}
\subjclass[2010]{20G42 (Primary); 05C25, 18D10 (Secondary)}
\keywords{Graph automorphism group, quantum group, representation category}
\thanks{The author was supported by the collaborative research centre SFB-TRR~195 ``Symbolic Tools in Mathematics and their Application''.}
\thanks{I would like to thank Alexander Mang, David Roberson, and Moritz Weber for inspiring discussions about graph categories. I also thank to Laura Maaßen for discussing details of her work and the relations with this article.}
\thanks{I am grateful to Moritz Weber also for reading and commenting an early draft of this manuscript. Many thanks go to the referee for a detailed review that significantly improved the quality of the article.}

\begin{abstract}
The semidirect product of a~finitely generated group dual with the symmetric group can be described through so-called group-theoretical categories of partitions (covers only a~special case; due to Raum--Weber, 2015) and skew categories of partitions (more general; due to Maaßen, 2018). We generalize these results to the case of graph categories, which allows to replace the symmetric group by the group of automorphisms of some graph.
\end{abstract}

\maketitle
\section*{Introduction}

The main subject of this article are diagrammatic categories that can be used to model representation categories of groups and quantum groups. One typical example is the \emph{category of all partitions}~$\Part$, which models the intertwiner spaces of the symmetric group \cite{HR05}. Another classical example is so-called \emph{Brauer duality}, which connects the \emph{category of all pairings} with the orthogonal group. Some additional categories of partitions (i.e. subcategories of the category of all partitions) were later interpreted in the theory of \emph{quantum groups} \cite{BS09}. A~recent paper \cite{MR19} studies categories of graphs. In particular, the \emph{category of all graphs} can be used to model the representation category of the automorphism group $\Aut G$ of a~given graph~$G$.

These results can all be understood as some generalizations of the classical Schur--Weyl duality. But we can also go the opposite direction. Starting with a~certain category, we can reconstruct the unique compact matrix quantum group associated to this category using the so-called \emph{Tannaka--Krein reconstruction} (for quantum groups formulated by Woronowicz \cite{Wor88}). This idea motivated the search for classification of all categories of partitions as every such instance defines a~new compact matrix quantum group.

This classification was successfully completed in \cite{RW16}. An important large class of the categories of partitions is formed by so-called \emph{group-theoretical categories} \cite{RW14,RW15}. Those categories are shown to be in one-to-one correspondence with $sS_\infty$-invariant normal subgroups $A\trianglelefteq\Z_2^{*\infty}$. In addition, it is shown that the associated quantum group is of the form $\hat\Gamma\rtimes S_n$, where $\Gamma=\Z_2^{*n}/A$.

The semidirect product construction $\hat\Gamma\rtimes S_n$ makes sense also if $A$ is $S_\infty$-invariant but not $sS_\infty$-invariant. In that case, the standard categories of partitions cannot be used to describe the intertwiners. A~natural question is then, whether there is an alternative approach to model the representation categories. This was solved in \cite{Maa18} by introducing skew categories of partitions.

The goal of the current article is to generalize those concepts defined for categories of partitions into the more general context of categories of graphs. The article is divided into two parts -- the first part dealing with combinatorics and the second part dealing with the quantum groups and intertwiners. Each part consists of a~preliminary section (Sect. \ref{sec.prelimcomb} and~\ref{sec.prelimQG}) and sections with original results (Sect. \ref{sec.comb}, \ref{sec.QG},~\ref{sec.conv}).

In Section~\ref{sec.prelimcomb}, we introduce categories of partitions and graphs and recall the important combinatorial results from \cite{RW14} and \cite{Maa18}. In Section~\ref{sec.comb}, we generalize those results into the graph-categorical context. We define \emph{group-theoretical graph categories} to be those graph categories that are invariant under taking graph quotients. We also introduce more general \emph{skew graph categories}. We show that such categories can be described in a~group-theoretical manner in terms of \emph{graph fibrations}.

\begin{customthm}{A}[Theorem \ref{T.comb}]
There is a~one-to-one correspondence between graph fibrations~$F$ and skew graph categories~$\Cat$ described by Formulae \eqref{eq.CatF},~\eqref{eq.FCat}. The graph fibration~$F$ is easy if and only if the skew graph category~$\Cat$ is a~group-theoretical graph category.
\end{customthm}

In Section~\ref{sec.prelimQG}, we introduce compact matrix quantum groups and their connections with categories of partitions and graphs. In particular, we recall the important results from \cite{RW15,Maa18}. In Section~\ref{sec.QG}, we generalize those results into the graph-categorical context. First, we look at the \emph{easy} case. In Theorem~\ref{T.Geasy}, we show that group-theoretical graph categories correspond to quantum groups of the form $\hat\Gamma\rtimes\Aut G$. Then we focus on the more general case of skew graph categories and obtain the following result.
\begin{customthm}{B}[Theorem \ref{T.G}]
\label{T.B}
Let $F$ be a~graph fibration and $\Cat$~the corresponding category. Let $G$ be a~graph, let $K$ be the greatest subgraph of~$G$ contained in~$F$. Then the quantum group corresponding to~$\Cat^G$ is
$$\GGr=\hat\Gamma\rtimes\Aut K,$$
where $\Gamma=\Z_2^{*V(K)}/F(K)$.
\end{customthm}

Finally, in Section~\ref{sec.conv}, we discuss the opposite question: Given a~quantum group of the form $\GGr=\hat\Gamma\rtimes\Aut G$, what graph category can we associate to it and what are the corresponding intertwiner spaces. In Theorem~\ref{T.intsp}, we even generalize Theorem~\ref{T.B} and describe the intertwiner spaces for the semidirect product $\hat\Gamma\rtimes H$, where $H\subset S_n$ is an arbitrary permutation group and $\Gamma$ is an $H$-invariant quotient of $\Z_2^{*n}$.

\section{Preliminaries: Partitions, graphs, and their categories}
\label{sec.prelimcomb}

\subsection{Partitions}

Let $S$ be a~set. A~\emph{partition}~$\pi$ of~$S$ is a~decomposition of the set~$S$ into disjoint non-empty subsets. That is, $\pi=\{V_1,\dots,V_n\}$ with $V_i\neq\emptyset$, $V_i\cap V_j=\emptyset$, and $\bigcup_i V_i=S$. We denote $\pi\in\Part(S)$.

Consider $k\in\N_0$. By a~\emph{partition of $k$ points}, we mean a~partition of the set $\{1,\dots,k\}$. We denote $\Part(k):=\Part(\{1,\dots,k\})$ the set of all such partitions. We denote such partitions pictorially by drawing the $k$~points on a~line and connecting those that are contained in the same part by strings. For example,
$$p=\{\{1,2,4\},\{3,6\},\{5\},\{7,8\}\}=\LPartition{0.5:5}{0.5:1,2,4;1:3,6;0.5:7,8}\in\Part(8).$$

Another way to represent partitions is to use words. Let $V$ be some countable alphabet. We denote by~$V^*$ the monoid of words over~$V$. By~$V^k$, we denote all words of length~$k$. We denote by $\emptyset\in V^0$ the empty word. We may represent partitions of $k$~points as words in~$V^k$ by identifying the blocks of~$p$ with some letters in~$V$. Note that this representation is not unique as we may always choose a~different identification between letters and blocks. Given a~word $\aw\in V^k$, we denote by $\ker(\aw)$ the associated partition. For example, the above partition $p$ can be represented by the following words
$$p=\LPartition{0.5:5}{0.5:1,2,4;1:3,6;0.5:7,8}=\ker(\mathsf{aabacbdd})=\ker(\mathsf{ccecgeaa})$$

Consider $k,l\in\N_0$. We denote by $\Part(k,l)$ the set of \emph{partitions of $k$ upper and $l$~lower points}, that is, partitions of the set $\{1,\dots,k\}\sqcup\{1,\dots,l\}$. Those will again be represented pictorially. This time, we put $k$ points in one line and $l$ on another line below and connect again those points that share the same part. In order to make clear, whether crossing strings denote a~single block or two separate ones, we denote the blocks by dots. That is, all strings coming from points belonging to a~single block intersect in a~single point emphasized by a~dot. For example,
\begin{equation}
\label{eq.pq}
\PG=
\Graph{
\GV 0.5:2.5;
\GV 0:1,4;
\GS 0.5/1:2.5/1,2.5/2,2.5/3;
\GS 0.5/0:2.5/2,2.5/3;
\GS 1/1.5:1/,2/,3/;
\GS 0/-0.5:1/,2/,3/,4/;
}\in\Part(3,4)
\qquad
\QG=
\Graph{
\GV 1:1,4;
\GV 0:1;
\GV 0.25:2.25,3.5;
\GS 0.25/0:2.25/2,3.5/3,3.5/4;
\GS 0.25/1:2.25/3,3.5/2;
\GS 1/1.5:1/,2/,3/,4/;
\GS 0/-0.5:1/,2/,3/,4/;
}\in\Part(4,4).
\end{equation}

The partitions on two lines can also be represented by pairs of words. The first word represents the top line, the second word represents the bottom line.
$$
\vrule height 16bp depth 10bp width 0bp
\PG=
\Graph{
\GV 0.5:2.5;
\GV 0:1,4;
\GS 0.5/1:2.5/1,2.5/2,2.5/3;
\GS 0.5/0:2.5/2,2.5/3;
\GS 1/1.5:1/,2/,3/;
\GS 0/-0.5:1/,2/,3/,4/;
\Ptext (3.2,0.5) {$\mathsf{a}$}
\Ptext (0.5,0) {$\mathsf{b}$}
\Ptext (4.5,0) {$\mathsf{c}$}
}=\ker(\mathsf{aaa},\mathsf{baac})
\qquad
\QG=
\Graph{
\GV 1:1,4;
\GV 0:1;
\GV 0.25:2.25,3.5;
\GS 0.25/0:2.25/2,3.5/3,3.5/4;
\GS 0.25/1:2.25/3,3.5/2;
\GS 1/1.5:1/,2/,3/,4/;
\GS 0/-0.5:1/,2/,3/,4/;
\Ptext (0.5,1) {$\mathsf{a}$}
\Ptext (4.5,1) {$\mathsf{d}$}
\Ptext (0.5,0) {$\mathsf{e}$}
\Ptext (1.75,0.35) {$\mathsf{c}$}
\Ptext (4,0.5) {$\mathsf{b}$}
}=\ker(\mathsf{abcd},\mathsf{ecbb})
$$

For partitions on two lines $\PG\in\Part(k,l)$, it will also be convenient to allow having empty blocks. That is, in the diagrams, there may also occur isolated dots.

\subsection{Categories of partitions}
\label{secc.partcat}
We define the following operations on the sets $\Part(k,l)$
\begin{itemize}
\item  The \emph{tensor product} of two partitions $\PG\in\Part(k,l)$ and $\QG\in\Part(k',l')$ is the partition $\PG\otimes \QG\in \Part(k+k',l+l')$ obtained by writing the graphical representations of $\PG$ and~$\QG$ ``side by side''.
$$
\Graph{
\GV 0.5:2.5;
\GV 0:1,4;
\GS 0.5/1:2.5/1,2.5/2,2.5/3;
\GS 0.5/0:2.5/2,2.5/3;
\GS 1/1.5:1/,2/,3/;
\GS 0/-0.5:1/,2/,3/,4/;
}
\otimes
\Graph{
\GV 1:1,4;
\GV 0:1;
\GV 0.25:2.25,3.5;
\GS 0.25/0:2.25/2,3.5/3,3.5/4;
\GS 0.25/1:2.25/3,3.5/2;
\GS 1/1.5:1/,2/,3/,4/;
\GS 0/-0.5:1/,2/,3/,4/;
}
=
\Graph{
\GV 1:5,8;
\GV 0.5:2.5;
\GV 0:1,4,5;
\GV 0.25:6.25,7.5;
\GS 0.5/1:2.5/1,2.5/2,2.5/3;
\GS 0.5/0:2.5/2,2.5/3;
\GS 0.25/0:6.25/6,7.5/7,7.5/8;
\GS 0.25/1:6.25/7,7.5/6;
\GS 1/1.5:1/,2/,3/,5/,6/,7/,8/;
\GS 0/-0.5:1/,2/,3/,4/,5/,6/,7/,8/;
}
$$

\item For $\PG\in\Part(k,l)$, $\QG\in\Part(l,m)$ we define their \emph{composition} $\QG\PG\in\Part(k,m)$ by putting the graphical representation of~$\QG$ below~$\PG$ identifying the lower row of~$\PG$ with the upper row of~$\QG$. The upper row of~$\PG$ now represents the upper row of the composition and the lower row of~$\QG$ represents the lower row of the composition.
$$
\Graph{
\GV 1:1,4;
\GV 0:1;
\GV 0.25:2.25,3.5;
\GS 0.25/0:2.25/2,3.5/3,3.5/4;
\GS 0.25/1:2.25/3,3.5/2;
\GS 1/1.5:1/,2/,3/,4/;
\GS 0/-0.5:1/,2/,3/,4/;
}
\cdot
\Graph{
\GV 0.5:2.5;
\GV 0:1,4;
\GS 0.5/1:2.5/1,2.5/2,2.5/3;
\GS 0.5/0:2.5/2,2.5/3;
\GS 1/1.5:1/,2/,3/;
\GS 0/-0.5:1/,2/,3/,4/;
}
=
\Graph{
\GV 1.5:2.5;
\GV 1:1,4;
\GS 1.5/2:2.5/1,2.5/2,2.5/3;
\GS 1.5/1:2.5/2,2.5/3;
\GS 2/2.5:1/,2/,3/;
\GS 1/0.5:1/,2/,3/,4/;
\GV 0:1,4;
\GV -1:1;
\GV -0.75:2.25,3.5;
\GS -0.75/-1:2.25/2,3.5/3,3.5/4;
\GS -0.75/0:2.25/3,3.5/2;
\GS 0/0.5:1/,2/,3/,4/;
\GS -1/-1.5:1/,2/,3/,4/;
}
= 
\Graph{
\GV 0.5:2.5,3.5,4;
\GV 0:1;
\GS 0.5/1:2.5/1,2.5/2,2.5/3;
\GS 0.5/0:2.5/2,2.5/3,2.5/4;
\GS 1/1.5:1/,2/,3/;
\GS 0/-0.5:1/,2/,3/,4/;
}
$$

\item For $\PG\in\Part(k,l)$ we define its \emph{involution} $\PG^*\in\Part(l,k)$ by reversing its graphical representation with respect to the horizontal axis.
$$
\left(
\Graph{
\GV 0.5:2.5;
\GV 0:1,4;
\GS 0.5/1:2.5/1,2.5/2,2.5/3;
\GS 0.5/0:2.5/2,2.5/3;
\GS 1/1.5:1/,2/,3/;
\GS 0/-0.5:1/,2/,3/,4/;
}
\right)^*
=
\Graph{
\GV 0.5:2.5;
\GV 1:1,4;
\GS 0.5/0:2.5/1,2.5/2,2.5/3;
\GS 0.5/1:2.5/2,2.5/3;
\GS 0/-0.5:1/,2/,3/;
\GS 1/1.5:1/,2/,3/,4/;
}
$$
\end{itemize}

Those operations define the structure of a~rigid monoidal involutive category on the collection $\Part(k,l)$. The set of natural numbers forms the set of objects and the partitions $\PG\in\Part(k,l)$ are the morphisms $k\to l$.

Any collection of subsets $\Cat(k,l)\subset\Part(k,l)$ containing the \emph{identity partition} $\Gid\in\Cat(1,1)$ and the \emph{pair partition} $\Gpair\in\Cat(0,2)$ (playing the role of the duality morphism) that is closed under the category operations and under adding and removing empty blocks is again a~category called a~\emph{category of partitions}.

This definition comes from \cite{BS09}. We made a~slight modification here by allowing partitions to have empty blocks (in \cite{BS09}, the empty blocks that arise in composition are simply deleted). But since we require the categories to be closed under adding and removing empty blocks, the definitions are equivalent.

\subsection{Group-theoretical categories of partitions}
\label{secc.grpthcat}

A category of partitions~$\Cat$ is called \emph{group-theoretical} if $\Ggrpth\in\Cat$. This definition was introduced in~\cite{RW14}. The name comes from a~certain correspondence formulated below.

Let $V$ be a~countable set. We denote by~$\Z_2^{*V}$ the free product~$\Z_2^{*|V|}$, where the generators of the factors are identified with the elements of~$V$. For a~given word $\aw\in V^*$, we denote by $g_{\aw}\in\Z_2^{*V}$ the corresponding group element.

A normal subgroup $A\trianglelefteq\Z_2^{*V}$ is called \emph{$S_V$-invariant} if it is invariant with respect to finitary permutations $V\to V$ (i.e. bijections $V\to V$ that act as identity up to a~finite amount of points). It is called $sS_V$-invariant if it is invariant with respect to arbitrary finitary maps $V\to V$, i.e. arbitrary maps $V\to V$ that act as identity up to a~finite amount of points.

Given a~word $\aw\in V^*$, we denote by $\aw^*\in V^*$ its \emph{reflection}, i.e.\ the word read backwards. Note that $g_{\aw^*}=g_{\aw}^{-1}$.

\begin{thm}[{\cite[Theorem~3.7]{RW14}}]
\label{T.partcor}
Let $V$ be some infinite countable set. We have the following one-to-one correspondence:

Let $A\trianglelefteq\Z_2^{*V}$ be an $sS_V$-invariant normal subgroup. Then
$$\Cat:=\{\ker(\aw,\mathbf{b})\mid g_{\aw^*\bw}\in A\}$$
forms a~group-theoretical category of partitions.

Conversely, let $\Cat$ be a~group-theoretical category of partitions. Then
$$A:=\{g_{\aw^*\bw}\mid\ker(\aw,\bw)\in\Cat\}$$
is an~$sS_V$-invariant normal subgroup of~$\Z_2^{*V}$.
\end{thm}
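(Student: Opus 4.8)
The plan is to reformulate membership in $\Cat$ purely in terms of the group element $g_{\aw^*\bw}=g_\aw^{-1}g_\bw$ and then to read off the correspondence from that reformulation. First I would record two elementary reductions. Using the pair partition $\Gpair$ to rotate every upper point down to the lower row, any $\ker(\aw,\bw)$ is transformed, inside \emph{any} category and without changing its group element, into a single-row partition $\ker(\emptyset,\aw^*\bw)$; so it suffices to understand one-row partitions $\ker(\emptyset,w)$ together with $g_w$. Second, since each generator squares to the identity, the reduced form of $g_w$ in $\Z_2^{*V}$ is obtained from $w$ by deleting adjacent equal letters, and two words give the same group element exactly when they reduce to the same letter sequence. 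The whole proof then amounts to showing that, for a group-theoretical $\Cat$, membership of $\ker(\aw,\bw)$ depends only on the reduced form of $g_{\aw^*\bw}$.

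The technical heart is a merging lemma: if $\Cat$ is group-theoretical and $\ker(\aw,\bw)\in\Cat$, then $\ker(f\aw,f\bw)\in\Cat$ for every finitary map $f\colon V\to V$, where $f\aw$ denotes $\aw$ with $f$ applied letterwise. Since an arbitrary finitary $f$ factors into a bijection and finitely many elementary merges (identify two letters, fix the rest), it is enough to treat one elementary merge, and this is precisely what the generator $\Ggrpth$ accomplishes: tensoring $\Ggrpth$ into the diagram at the two relevant strings and composing fuses the two chosen blocks while leaving the remaining block structure intact. I expect this geometric verification — checking that $\Ggrpth$, together with $\Gid$ and $\Gpair$, realizes exactly the coarsening of a partition — to be the main obstacle; everything else is bookkeeping on top of it. From the merging lemma I would deduce the reduction lemma: $\ker(\emptyset,w)\in\Cat$ if and only if $\ker(\emptyset,w_{\mathrm{red}})\in\Cat$, where $w_{\mathrm{red}}$ is the reduced word. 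One direction caps off an adjacent equal pair with $\Gpair$ (a plain category move, matching the cancellation $g_xg_x=e$); the converse inserts a fresh pair with $\Gpair$ and then merges its block into the neighbouring one via the merging lemma.

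Granting these lemmas, I would verify that each assignment is well defined and structure preserving. For $A\mapsto\Cat$: a change of block labelling is a finitary permutation, so $S_V$-invariance makes membership independent of the chosen word; $\Gid$ and $\Gpair$ lie in $\Cat$ because $g_\aw^{-1}g_\aw=e$ and $g_\aw^2=e$ belong to the subgroup $A$; involution corresponds to $g\mapsto g^{-1}$; closure under tensor product uses normality to absorb the conjugating factor $g_\cw$; and closure under composition uses $sS_V$-invariance to glue the middle rows by the coarsening map $f$ identifying the matched middle letters, after which $g_{f\aw^*f\bw}\,g_{f\cw^*f\dw}=g_{f\aw^*f\dw}$ is the group element of the composite (closed loops become empty blocks, which are freely added and removed). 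Finally, the generator $\Ggrpth$ has trivial associated group element, hence lies in $\Cat$, so $\Cat$ is group-theoretical. For $\Cat\mapsto A$: the identity partition gives $e\in A$, involution gives closure under inverses, and closing up a composition after aligning middle rows gives closure under products, so $A$ is a subgroup; normality is obtained by flanking a one-row representative with a word $\cw$ and its reflection, built from nested $\Gpair$'s and merged into place, which conjugates the group element by $g_\cw$; and $sS_V$-invariance is the merging lemma verbatim.

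It remains to see the assignments are mutually inverse. Starting from an $sS_V$-invariant $A$, the composite $A\mapsto\Cat\mapsto A'$ is the identity because, by construction, $\ker(\aw,\bw)\in\Cat$ holds precisely when $g_{\aw^*\bw}\in A$, whence $A'=A$. Starting from a group-theoretical $\Cat$, the inclusion $\Cat\subseteq\Cat'$ is immediate from the definition of $A$. For $\Cat'\subseteq\Cat$, take $\ker(\aw,\bw)\in\Cat'$, so that $g_{\aw^*\bw}\in A$ equals $g_{\aw'^*\bw'}$ for some $\ker(\aw',\bw')\in\Cat$; after rotating to one row, the reduction lemma shows that each is $\Cat$-membership-equivalent to the canonical partition $\ker(\emptyset,r)$ of their common reduced word $r$, and since rotation is membership preserving and invertible, $\ker(\aw,\bw)\in\Cat$. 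This closes the correspondence.
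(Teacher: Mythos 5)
The paper does not prove Theorem~\ref{T.partcor} at all: it is recalled as a preliminary, with the proof residing in \cite[Theorem~3.7]{RW14}. So the only meaningful comparison is with Raum--Weber's argument, and your proposal is essentially a reconstruction of it. Your ``merging lemma'' is precisely \cite[Lemma~2.3]{RW14} (group-theoretical categories are closed under joining blocks), a fact this paper itself quotes in Section~\ref{secc.skewcat} and Remark~\ref{R.grpthpart}; the reduction-to-reduced-words step, the axiom verifications in both directions (normality absorbing the conjugating factor $g_\cw$ for tensor products, $sS_V$-invariance gluing the middle rows for composition, inverses from involution, conjugation via nested pairs, $sS_V$-invariance from merging), and the mutual-inverse argument through the common reduced word are all sound and follow the standard route.

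The one step that would fail as literally written is your realization of an elementary merge. ``Tensoring $\Ggrpth$ into the diagram at the two relevant strings and composing'' only works when the two blocks to be joined have occurrences that can be made adjacent by rotations, and this can fail: in $\ker(\emptyset,\mathsf{xzyz})$ no cyclic rotation ever places an $\mathsf{x}$-point next to a $\mathsf{y}$-point. You cannot repair this by sliding strings past one another, because group-theoretical categories need not contain the crossing partition $\ker(\mathsf{ab},\mathsf{ba})$: its group element is $(g_\mathsf{b}g_\mathsf{a})^2$, which does not lie in, say, $A_3=\llangle (ab)^3\mid a,b\in V\rrangle$, although the corresponding category is group-theoretical. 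Joining two distant blocks requires the partition with a single four-point block spanning the two chosen positions and identity strings running between them, and showing that this partition is generated by $\Ggrpth$ is a genuine inductive construction --- exactly the content of \cite[page~6]{RW14}, which this paper reuses in the proof of Lemma~\ref{L.grpthpart}. You correctly flagged this verification as the main obstacle, but your local description of it is not yet a proof; once that construction is supplied (or \cite[Lemma~2.3]{RW14} is cited outright), the rest of your argument goes through.
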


As an example of an $sS_V$-invariant normal subgroup, we mention the infinite family
$$A_s=\llangle (ab)^s\mid a,b\in V\rrangle\subset\Z_2^{*V},$$
where the double angle brackets denote the smallest \emph{normal} subgroup generated by the given generators. For more instances, see \cite[Example~3.9]{RW14}.

\subsection{Skew categories of partitions}
\label{secc.skewcat}

The definition of a~category of partitions was modified in \cite{Maa18} in order to generalize the above described correspondence to all $S_V$-invariant normal subgroups $A\trianglelefteq\Z_2^{*V}$. We define the following operations that are essentially based on the group multiplication in~$\Z_2^{*V}$

Considering $\PG=\ker(\aw,\bw)$ and $\QG=\ker(\cw,\dw)$, we call the partition $\ker({\aw\cw,\bw\dw})$ a \emph{connected tensor product} of $\PG$ and $\QG$. Note that the result does not only depend on the partitions $\PG$ and $\QG$, but also on the labelling of the blocks. For given $\PG$ and~$\QG$, we can always choose the corresponding labellings $\aw$,~$\bw$, $\cw$,~$\dw$ in such a~way that the words $\aw$ and~$\bw$ do not share any letters with $\cw$ and~$\dw$. In that case, we obtain the standard tensor product as defined in Section~\ref{secc.partcat}. In general, the connected tensor product enables to unite some blocks from~$\PG$ with some blocks from~$\QG$ by choosing a~common letter to denote those blocks.

We define the \emph{restricted composition} to be the ordinary composition from Section~\ref{secc.partcat}, but we restrict only to the pairs of partitions of the form $\PG=\ker(\aw,\bw)$, $\QG=\ker(\bw,\cw)$. Note that we can choose the words $\aw$ and~$\cw$ in such a~way that they share only the letters contained in~$\bw$. In that case, the result of such composition can be written as $\QG\PG=\ker(\aw,\cw)$. In general, we can define the \emph{restricted connected composition} to be the operation $(\ker(\aw,\bw),\ker(\bw,\cw))\mapsto\ker(\aw,\cw)$.

A \emph{skew category of partitions} is a~collection of sets $\Cat(k,l)\subset\Part(k,l)$ containing the identity partition~$\Gid$ and the pair partition~$\Gpair$ that is closed under all possible connected tensor products (with all possible choices of the labelling), restricted compositions, involution, and adding/removing empty blocks. It automatically follows that $\Cat$ is closed under restricted connected compositions.

Skew categories of partitions are again rigid monoidal involutive categories, but this time the set of objects is formed by one-line partitions $\bigcup_{k\in\N_0}\Part(k)$. For a~pair of partitions $p\in\Part(k)$, $q\in\Part(l)$, we define the set of morphisms
$$\Cat(p,q):=\{\ker(\aw,\bw)\in\Cat(k,l)\mid p=\ker(\aw),\;q=\ker(\bw)\}\subset\Cat(k,l).$$

Note that any group-theoretical category of partitions is closed under joining blocks \cite[Lemma~2.3]{RW14}. Consequently, it is closed under connected tensor products and hence forms a~skew category of partitions. On the other hand, skew categories of partitions always contain the partition~$\Ggrpth$. Moreover, group-theoretical categories of partitions precisely correspond to those skew categories of partitions that are invariant with respect to joining blocks.

We have the following generalization of Theorem~\ref{T.partcor}.

\begin{thm}[{\cite[Theorem~2.2]{Maa18}}]
Let $V$ be some infinite countable set. We have the following one-to-one correspondence:

Let $A\trianglelefteq\Z_2^{*V}$ be an $S_V$-invariant normal subgroup. Then
$$\Cat:=\{\ker(\aw,\mathbf{b})\mid g_{\aw^*\bw}\in A\}$$
forms a~skew category of partitions.

Conversely, let $\Cat$ be a~skew category of partitions. Then
$$A:=\{g_{\aw^*\bw}\mid\ker(\aw,\bw)\in\Cat\}$$
is an~$S_V$-invariant normal subgroup of~$\Z_2^{*V}$.
\end{thm}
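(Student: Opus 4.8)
The plan is to verify separately that each assignment produces an object of the required type and that the two assignments are mutually inverse, throughout exploiting the elementary identity $g_{\aw^*}=g_{\aw}^{-1}$ and the fact that any two labellings of one and the same partition differ by a finitary permutation of~$V$ (so that, by $S_V$-invariance of~$A$, the condition $g_{\aw^*\bw}\in A$ depends only on the partition $\ker(\aw,\bw)$). I will also use repeatedly that composing with the pair partition $\Gpair$ and its adjoint rotates points between the two rows without changing the associated group element, so that $\ker(\aw,\bw)\in\Cat$ if and only if the one-line partition $\ker(\emptyset,\aw^*\bw)\in\Cat$; this lets me push every statement down to one-line (lower) partitions.

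First I would check that $\Cat:=\{\ker(\aw,\bw)\mid g_{\aw^*\bw}\in A\}$ is a skew category. Membership of $\Gid$ and $\Gpair$ is immediate since their group elements are trivial. The three closure properties each reduce to a group computation: for the restricted composition, $g_{\aw^*\cw}=g_{\aw^*\bw}\,g_{\bw^*\cw}$; for the involution, $g_{\bw^*\aw}=g_{\aw^*\bw}^{-1}$; and for the connected tensor product, $g_{(\aw\cw)^*(\bw\dw)}=\bigl(g_{\cw}^{-1}g_{\aw^*\bw}g_{\cw}\bigr)\,g_{\cw^*\dw}$, which lies in~$A$ by normality together with closure under products. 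Adding or removing empty blocks is invisible at the level of words. Conversely, to see that $A:=\{g_{\aw^*\bw}\mid\ker(\aw,\bw)\in\Cat\}$ is a subgroup I would rotate everything to the lower row and identify $A$ with $\{g_{\iw}\mid\ker(\emptyset,\iw)\in\Cat\}$: the unit comes from $\Gpair$, inverses from the involution, and products from the connected tensor product $\ker(\emptyset,\iw)\otimes\ker(\emptyset,\jw)=\ker(\emptyset,\iw\jw)$. Normality is obtained by realizing the conjugate $g_{\cw}g_{\aw^*\bw}g_{\cw}^{-1}$ as the connected tensor product of $\ker(\aw,\bw)$ with $\ker(\cw^*,\cw^*)$, the latter being a connected tensor product of identities and hence in~$\Cat$; invariance under finitary permutations is immediate since relabelling fixes the underlying partition.

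It remains to show the assignments are mutually inverse. One inclusion in each direction is tautological, and starting from a subgroup $A$ the round trip returns $A$ because every group element is $g_{\emptyset^*\iw}$ for some word $\iw$, and $\ker(\emptyset,\iw)$ then lies in the associated category. The substantial point is the reverse inclusion $\{\ker(\aw,\bw)\mid g_{\aw^*\bw}\in A\}\subset\Cat$ for a given skew category~$\Cat$. After rotating to the lower row this says: the set $N:=\{\iw\mid\ker(\emptyset,\iw)\in\Cat\}$ is saturated, i.e.\ closed under the equivalence $g_{\iw}=g_{\jw}$. Since $\Z_2^{*V}$ is the free product of copies of $\Z_2$, that equivalence is generated by insertion and deletion of adjacent equal pairs $\la\la$, so it suffices to close $N$ under these two moves. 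Deletion of an interior pair is realized by composing with a cap $\Gpair^*$ in the appropriate tensor position, and one checks on the level of block connectivity that this precisely removes the pair, merging nothing, even when the letter occurs elsewhere in $\iw$.

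The genuine obstacle is insertion of a pair $\la\la$ at an arbitrary interior position, possibly with $\la$ already occurring in $\iw$ --- at face value this looks like the forbidden operation of joining two blocks of a single partition, which a skew category need not admit. I would resolve this using only boundary operations together with cyclic rotation: cyclic rotation, assembled from $\Gid$ and $\Gpair$ and hence available in every skew category, brings the insertion site to the right-hand boundary; there I insert the pair as a connected tensor product with $\Gpair$, choosing the label $\la$ so that the new pair is joined to the existing $\la$-block if present (this is a connected tensor product of two partitions, which is permitted, and not a joining of blocks within one partition); finally I rotate back. Each step preserves membership in $\Cat$, so $\ker(\emptyset,\iw_1\la\la\iw_2)\in\Cat$ whenever $\ker(\emptyset,\iw_1\iw_2)\in\Cat$. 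This closes $N$ under both moves, yields saturation, and completes the correspondence. The care needed to perform interior insertion and deletion without invoking block-joining is exactly where the skew, as opposed to group-theoretical, setting makes the argument delicate.
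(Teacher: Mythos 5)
Your argument is correct, and it is worth noting that the paper never proves this statement itself — it is quoted from \cite[Theorem~2.2]{Maa18}; the closest in-paper argument is the proof of its graph-category generalization, Theorem~\ref{T.comb}. Your proof is essentially that proof specialized to edgeless graphs: the same well-definedness observation (two labellings of one partition differ by a finitary permutation, so $S_V$-invariance makes $g_{\aw^*\bw}\in A$ a property of the partition), the same closure computations, the same normality trick (your connected tensor product with $\ker(\cw^*,\cw^*)$ is the partition analogue of the paper's $f$-union of $(K,\emptyset,\aw)$ with $\Gid$ at a vertex $v$), and the same resolution of the key saturation step — rotate the site to the boundary, insert a pair $\la\la$ by a connected tensor product with $\Gpair$ carrying a shared label, delete by capping, rotate back — which matches the paper's ``adding of such a pair can be achieved by computing an $f$-union with $\Gpair$\dots, removing can be achieved by contraction\dots\ (some additional rotations may be needed).''

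One imprecision deserves attention, since it concerns exactly the delicacy you flag at the end. You justify rotation-closure by ``composing with the pair partition $\Gpair$ and its adjoint,'' and deletion by ``composing with a cap $\Gpair^*$ in the appropriate tensor position.'' In a skew category these plain compositions are not automatically legal: composition is only permitted when the kernel of the lower row of the first partition equals the kernel of the upper row of the second, and the generically labelled partition $\Gid\otimes\cdots\otimes\Guppair\otimes\cdots\otimes\Gid$ violates this whenever $\iw_1\la\la\iw_2$ has repeated letters. The repair is the same device you use for insertion: build the cap with matched labels, i.e.\ form $\ker(\iw_1\la\la\iw_2,\,\iw_1\iw_2)$ as a connected tensor product of the identities $\ker(c,c)$, $c$ running through $\iw_1\iw_2$, and $\Guppair=\ker(\la\la,\emptyset)$, all with shared letters. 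This partition lies in $\Cat$, its upper-row kernel matches by construction, so the composition is restricted, and since the upper word of the composite is empty the result is exactly $\ker(\emptyset,\iw_1\iw_2)$ with no spurious merging. The same matched-labelling argument is what actually proves closure under rotations (\cite[Lemma~1.8(i)]{Maa18}, quoted in Remark~\ref{R.skew}), so your black-box use of rotations — including the cyclic rotation of one-line partitions — is legitimate once justified this way rather than by unrestricted composition.
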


\subsection{Graph categories}
\label{secc.graphcat}

The goal of this work is to generalize the above results to so-called graph categories defined in \cite{MR19}.

In this article, by a~\emph{graph}, we will always mean a~finite undirected graph with no multiple edges, but with possibility of having loops.

A \emph{bilabelled graph}~$\KG$ is a~triple $(K,{\bf a},{\bf b})$, where $K$ is a~graph, ${\bf a}=(a_1,\dots,a_k)$, ${\bf b}=(b_1,\dots,b_l)$ are tuples of vertices of~$K$. We will call~$\mathbf{a}$ the tuple of \emph{input vertices} while $\mathbf{b}$ are \emph{output vertices} (the role of $\mathbf{a}$ and~$\mathbf{b}$ is switched in comparison with \cite{MR19} to be consistent with the notation for partitions). For any $k,l\in\N_0$ we denote by~$\Gat(k,l)$ the set of all bilabelled graphs $\KG=(K,{\bf a},{\bf b})$ with $|{\bf a}|=k$, $|{\bf b}|=l$ (considering the graphs ``up to isomorphism''). The set of all bilabelled graphs is denoted simply by~$\Gat$.

We define a~structure of a~(monoidal involutive) category on the set of all bilabelled graphs by introducing some operations. Consider $\KG=(K,{\bf a},{\bf b})$, $\HG=(H,{\bf c},{\bf d})$. We define

\begin{itemize}
\item \emph{tensor product} $\KG\otimes \HG=(K\sqcup H,{\bf ac},{\bf bd})$,
\item \emph{composition} (only defined if $|{\bf b}|=|{\bf c}|$) ${\bf H\cdot K}=(H\cdot K,{\bf a},{\bf d})$, where $H\cdot K$ is a~graph that is created from $H\sqcup K$ by contracting the vertex $b_i$ with~$c_i$ for every~$i$ (ignoring the resulting edge multiplicities),
\item \emph{involution} $\KG^*=(K,{\bf b},{\bf a})$.
\end{itemize}

We denote by~$N_k$ the edgeless graph with $k$ vertices. In particular, $N_0$ is the \emph{null graph} containing no vertex. We denote $\nullG:=(N_0,\emptyset,\emptyset)\in\Gat(0,0)$.

We denote $\MG^{k,l}:=(M,v^k,v^l)$, where $M$ is a~graph with a~single vertex~$v$.

Any collection of bilabelled graphs~$\Cat$ containing $\MG^{1,1}$ (playing the role of identity), $\MG^{0,2}$ (playing the role of the duality morphism), $\nullG$ (playing the role of the scalar identity) and closed under the above defined operations forms a~rigid monoidal involutive category called a~\emph{graph category}.


We can represent the bilabelled graphs pictorially in a~similar way as partitions. Unlike \cite{MR19}, we will draw them ``top to bottom'' instead of ``right to left'' to be consistent with partitions.

Considering $\KG=(K,\aw,\bw)\in\Gat(k,l)$ we put $k$ points on a~line and $l$ points on another line below. Between those two lines of points, we draw the graph~$K$ and each input vertex~$a_i$ is connected by a~string to the $i$-th point on the top line and each output vertex~$b_j$ is connected by a~string to the $j$-th point on the bottom line. The graph edges are drawn by thick lines, whereas the above mentioned connecting strings are drawn thin. Typical examples of a~bilabelled graphs may look as follows
$$
\KG=
\Graph{
\GV 0.5:1,2,3;
\GE 0.5/0.5:2/3;
\GS 0.5/-0.5:1/,2/,3/;
\GS 0.5/1.5:1/2,3/1;
},\qquad
\HG=
\Graph{
\GV 0.5:1,3;
\GV 1:2;
\GV 0:2;
\GE 0/0.5:2/1;
\GE 0.5/1:1/2,3/2;
\GE 0/1:2/;
\GS 1/-1:2/3.3;
\GS 0.5/-1:3/2,3/4,1/1;
\GS 0/-1:2/1.5,2/2.5;
\GS 0.5/2:1/3,3/2;
\GS 1/2:2/1;
}.
$$

The categorical operations then have a~similar pictorial interpretation as in the case of partitions. Tensor product is putting ``side by side'':
$$
\Graph{
\GV 0.5:1,2,3;
\GE 0.5/0.5:2/3;
\GS 0.5/-0.5:1/,2/,3/;
\GS 0.5/1.5:1/2,3/1;
}\otimes
\Graph{
\GV 0.5:1,3;
\GV 1:2;
\GV 0:2;
\GE 0/0.5:2/1;
\GE 0.5/1:1/2,3/2;
\GE 0/1:2/;
\GS 1/-1:2/3.3;
\GS 0.5/-1:3/2,3/4,1/1;
\GS 0/-1:2/1.5,2/2.5;
\GS 0.5/2:1/3,3/2;
\GS 1/2:2/1;
}
=
\Graph{
\GV 0.5:1,2,3;
\GE 0.5/0.5:2/3;
\GS 0.5/-0.5:1/,2/,3/;
\GS 0.5/1.5:1/2,3/1;
\GV 0.5:4,6;
\GV 1:5;
\GV 0:5;
\GE 0/0.5:5/4;
\GE 0.5/1:4/5,6/5;
\GE 0/1:5/;
\GS 1/-1:5/6.3;
\GS 0.5/-1:6/5,6/7,4/4;
\GS 0/-1:5/4.5,5/5.5;
\GS 0.5/2:4/6,6/5;
\GS 1/2:5/4;
}.
$$
Composition is putting one graph below the other and contracting the strings:
$$
\Graph{
\GV 0.5:1,3;
\GV 1:2;
\GV 0:2;
\GE 0/0.5:2/1;
\GE 0.5/1:1/2,3/2;
\GE 0/1:2/;
\GS 1/-1:2/3.3;
\GS 0.5/-1:3/2,3/4,1/1;
\GS 0/-1:2/1.5,2/2.5;
\GS 0.5/2:1/3,3/2;
\GS 1/2:2/1;
}\cdot
\Graph{
\GV 0.5:1,2,3;
\GE 0.5/0.5:2/3;
\GS 0.5/-0.5:1/,2/,3/;
\GS 0.5/1.5:1/2,3/1;
}
=
\Graph{
\GV 1.5:1,2,3;
\GE 1.5/1.5:2/3;
\GS 1.5/0.5:1/,2/,3/;
\GS 1.5/2.5:1/2,3/1;
\GV -1:1,3;
\GV -0.5:2;
\GV -1.5:2;
\GE -1.5/-1:2/1;
\GE -1/-0.5:1/2,3/2;
\GE -1.5/-0.5:2/;
\GS -0.5/-2.5:2/3.3;
\GS -1/-2.5:3/2,3/4,1/1;
\GS -1.5/-2.5:2/1.5,2/2.5;
\GS -1/0.5:1/3,3/2;
\GS -0.5/0.5:2/1;
}=
\Graph{
\GV 0.5:1,3;
\GV 1:2;
\GV 0:2;
\GE 0/0.5:2/1;
\GE 0.5/1:1/2,3/2;
\GE 0/1:2/;
\GS 1/-1:2/3.3;
\GS 0.5/-1:3/2,3/4,1/1;
\GS 0/-1:2/1.5,2/2.5;
\GEa (1,0.5) (-1,0) (0,1)
\GS 1/2:2/;
\GS 0.5/2:1/;
}.
$$
Involution is vertical reflection.
$$\left(
\Graph{
\GV 0.5:1,2,3;
\GE 0.5/0.5:2/3;
\GS 0.5/-0.5:1/,2/,3/;
\GS 0.5/1.5:1/2,3/1;
}
\right)^*=
\Graph{
\GV 0.5:1,2,3;
\GE 0.5/0.5:2/3;
\GS 0.5/1.5:1/,2/,3/;
\GS 0.5/-0.5:1/2,3/1;
}.$$

In this spirit, the category of partitions embed into the category of all graphs. Given $\PG\in\Part(k,l)$, we associate to it an edgeless bilabelled graph, where the vertices stand for the blocks of~$\PG$. That is $\PG\mapsto (N_b,\aw,\bw)$, where $b$ is the number of blocks in~$\PG$ and $\PG=\ker(\aw,\bw)$. In particular, note that the bilabelled graphs~$\MG^{k,l}$ containing a~single vertex correspond to partitions consisting of a~single block (e.g.~$\MG^{1,1}=\Gid$, $\MG^{0,2}=\Gpair$). For the rest of this article, we will not distinguish between partitions with $k$ upper and $l$ lower points and the associated bilabelled graphs.

%

\begin{rem}
\label{R.vertexrem}
Formally, there is no bijection between categories of partitions and categories of edgeless graphs. The reason is that in case of partitions, we assume by definition that the categories are closed with respect to adding and removing empty blocks. In the language of graphs, empty blocks correspond to isolated vertices. Any graph category is certainly closed under adding isolated vertices as the graph with a single isolated vertex is contained in any category (since $\Gvertex=\Guppair\cdot\Gpair$), but it need not be closed under removing those vertices. In case of graph categories, this can be considered as an unimportant technical detail. However, it becomes important in case of skew categories of partitions and skew graph categories, which we are going to define later.
\end{rem}

\subsection{Rotations and Frobenius reciprocity} Consider a~bilabelled graph $\KG=(K,(a_1,\dots,a_k),(b_1,\dots,b_l))\in\Gat(k,l)$. We define its
\begin{itemize}
\item \emph{left rotation} $\Lrot\KG:=(K,(a_2,\dots,a_k),(a_1,b_1,\dots,b_l))\in\Gat(k-1,l+1)$,
\item \emph{right rotation} $\Rrot\KG:=(K,(a_1,\dots,a_k,b_l),(b_1,\dots,b_{l-1}))\in\Gat(k+1,\ppen l-1)$.
\end{itemize}

It holds that every category of graphs (and hence also every category of partitions) is closed under both those operations and their inverses. The proof is simple: those rotations can actually be realized by composing with some tensor product of the pair partition and identities. For example, we can express
$$\Rrot\KG=(\Gid\otimes\cdots\otimes\Gid\otimes\Guppair)\cdot(\KG\otimes\Gid).$$

As a~consequence, any category of graphs~$\Cat$ is generated by the collection of graphs with output vertices only $\KG\in\Cat(0,k)$, $k\in\N_0$.

\section{Group-theoretical graph categories}
\label{sec.comb}

In this section, we generalize the group-theoretical categories of partitions and skew categories of partitions from \cite{RW14,Maa18} (summarized here in Sects. \ref{secc.grpthcat},~\ref{secc.skewcat}) by introducing the following concepts.

\bigskip
\begin{tabular}{ll}
\textbf{Partition concept}                       & \textbf{Graph analogue}\\\hline
Group-theoretical cat.~of partitions             & Group-theoretical graph category\\
$sS_V$-invariant norm. subgrp. $A\trianglelefteq\Z_2^{*V}$& Easy graph fibration\\
Skew category of partitions                      & Skew graph category\\
$S_V$-invariant norm. subgrp. $A\trianglelefteq\Z_2^{*V}$ & Graph fibration\\
\end{tabular}

\subsection{Graph quotients and overlapping unions}
First, we define some important operations on graphs.

\begin{defn}
Let $K$ be a~graph and $\pi$ a~partition of its vertex set~$V(K)$. We denote by $K/\pi$ the \emph{quotient graph}, where the set of vertices are the blocks of~$\pi$ and there is an edge between two blocks if there is an edge in~$K$ between some of their elements. We denote by~$q_\pi$ the surjection $V(K)\to V(K/\pi)$, which actually defines a~graph homomorphism $q_\pi\colon K\to K/\pi$.
\end{defn}

\begin{defn}
Consider two graphs $K$ and~$H$. An injective partial function $f\colon V(K)\to V(H)$, that is, a~subset $f\subset V(K)\times V(H)$ such that for every $v\in V(K)$ or $v\in V(H)$ there is at most one pair in $f$ containing~$v$ will be called a~\emph{vertex overlap} of $K$ and~$H$. We denote by $K\cup_f H$ the quotient of $K\sqcup H$, where we identify all the pairs in~$f$. We call it the \emph{$f$-union} of $K$ and~$H$.
\end{defn}

If $f$ is empty, then $K\cup_f H=K\sqcup H$ is the disjoint union. Considering non-empty~$f$ corresponds to the situation where the vertex sets $V(K)$ and $V(H)$ overlap and then we compute the true union of $K$ and~$H$. We denote by~$f_K$ the inclusion $V(K)\to V(K\cup_f H)$ and similarly $f_H\colon V(H)\to V(K\cup_f H)$.

\subsection{Graph fibrations}
Before introducing group-theoretical categories of partitions, we first define the structure providing the group theoretical description, which we call the \emph{graph fibration}. Let us start with introducing some notation.

\begin{nota}
Let $V,V'$ be some sets and consider a map $\phi\colon V\to V'$. Then the obvious homomorphisms $V^*\to V'^*$ and $\Z_2^{*V}\to\Z_2^{*V'}$ induced by $\phi$ are denoted by the same letter $\phi$. In particular, the same applies if $\phi$ is a graph homomorphism $G\to G'$ and $V=V(G)$, $V'=V(G')$.
\end{nota}

\begin{defn}
A \emph{graph fibration}~$F$ is a~set of ordered pairs $(K,a)$, where $K$ is a~graph (equivalence class up to isomorphism) and $a\in\Z_2^{*V(K)}$, such that $(N_0,e)$, $(N_1,e)\in F$ ($e$~denotes the group identity) and the following holds.
\begin{enumerate}
\item[(F1)] The sets $F(K):=\{a\in\Z_2^{*V(K)}\mid (K,a)\in F\}$ are either empty or normal subgroups of~$\Z_2^{*V(K)}$.
\item[(F2)] For any automorphism~$\phi$ of~$K$, we have $F(K)=\phi(F(K))$.
\item[(F3)] For any pair of graphs $K$ and~$H$ and any vertex overlap~$f$, $F(K)$ and $F(H)$ embed into $F(K\cup_f H)$ by $f_K$ and~$f_H$. That is, $F$ is closed under the operation of \emph{$f$-union} defined as
$$(K,a)\cup_f (H,b)=(K\cup_f H,f_K(a)f_H(b)).$$
\end{enumerate}
If $F(K)$ is non-empty, we write $K\in F$ and call $F(K)$ a~\emph{fibre} of~$F$. A~graph fibration is called \emph{easy} if it is closed under quotients. That is:
\begin{enumerate}\setcounter{enumi}{3}
\item[(F4)] For any $K\in F$ and any partition~$\pi$ of~$V(K)$, we have $q_\pi(F(K))\subset F(K/\pi)$.
\end{enumerate}
\end{defn}

\begin{rem}
	Let us clarify what do we exactly mean by ``up to isomorphism'' here. The elements of $F$ are formally equivalence classes of the pairs $(K,a)$ with respect to the following equivalence: $(K,a)\equiv(K',a')$ if there is a graph isomorphism $\phi\colon K\to K'$ such that $a'=\phi(a)$.

Nevertheless, in the following text we will not strictly distinguish between the elements and the equivalence classes. In particular, given a graph $K$, we denote by $F(K)$ the set of all $a\in\Z_2^{*V(K)}$ such that the equivalence class of $(K,a)$ is an element of $F$ (exactly as we wrote in (F1)). So, $F(K)$ should always be seen as a normal subgroup of $\Z_2^{*V(K)}$ containing the actual elements of $\Z_2^{*V(K)}$, not any kind of equivalence classes. This then of course already implies that $F(K)$ is invariant with respect to the automorphisms of $K$, so the axiom (F2) is in some sense redundant. Nevertheless, it is convenient to have it explicitly listed since, if we are practically working with the actual graph and not their equivalence classes, we then have to explicitly check that every $F(K)$ is indeed $\Aut K$-invariant.
\end{rem}

\begin{rem}
In case of easy graph fibrations, the axiom~(F3) can be equivalently formulated just for disjoint unions since any $f$-union is a~quotient of the disjoint union.
\end{rem}

\begin{nota}
In the following text, we will usually suppress the maps $f_K$ and~$f_H$. That is, for a~pair of graphs $K$ and~$H$ and a~vertex overlap~$f$, we identify $K$ and~$H$ with the corresponding subgraphs of $K\cup_f H$. If it is clear, which vertex overlap~$f$ are we using, this should cause no confusion.
\end{nota}

\subsection{Group-theoretical graph categories}

In this section, we define what is a \emph{group-theoretical graph category}. The motivation for such a~definition and name is (similarly as in case of partitions) that we have a~certain group-theoretical description of such categories -- namely the easy graph fibrations defined in the previous section. The structure of group-theoretical graph categories is then generalized by defining \emph{skew graph categories}, which then correspond to general graph fibrations.

\begin{defn}
A \emph{group-theoretical graph category} is a~graph category $\Cat\subset\Gat$, which is closed under taking graph quotients.
\end{defn}

\begin{rem}
\label{R.grpthpart}
In terms of partitions, taking quotients means simply joining different blocks. The ability to join blocks is also the key feature of group-theoretical categories of partitions. That is, a~category of partitions~$\Cat$ contains~$\Ggrpth$ (so $\Cat$~is group-theoretical) if and only if it is closed under joining blocks \cite[Lemma~2.3]{RW14}.

That is, there is a correspondence between group-theoretical categories of partitions and group-theoretical graph categories containing edgeless graphs only. In contrast with Remark~\ref{R.vertexrem}, this correspondence is bijective. Indeed, since group-theoretical graph categories are supposed to be closed under quotients, they must in particular be closed under removing isolated vertices.

There is no such a~simple characterization for group-theoretical graph categories as we do for categories of partitions. For instance, it does not hold that any graph category containing the partition $\Ggrpth$ would be group-theoretical. Nevertheless, in Section~\ref{secc.gens}, we are going to formulate a similar characterization in terms of generators.
\end{rem}

In the following, we are going to define skew categories of graphs as a~generalization of skew categories of partitions.

Considering partitions $p\in\Part(k)$, $q\in\Part(l)$, denote
$$\Gat(p,q):=\{\KG=(K,\aw,\bw)\in\Gat(k,l)\mid p=\ker\aw,q=\ker\bw\}\subset\Gat(k,l).$$

We can restrict the composition of bilabelled graphs $\KG=(K,\aw,\bw)$, $\HG=(H,\cw,\dw)$ only to the cases, when $\ker\bw=\ker\cw$ and then interpret the collection of sets $\Gat(p,q)$ as a~monoidal $*$-category with one-line partitions $\bigcup_{k\in\N_0}\Part(k)$ as the set of objects.

\begin{defn}
A \emph{skew graph category}~$\Cat$ is a~set $\Cat\subset\Gat$ of bilabelled graphs containing $\nullG\in\Cat(0,0)$, $\Gid\in\Cat(1,1)$ and $\Gpair\in\Cat(0,2)$, which is closed under
\begin{itemize}
\item \emph{$f$-unions} -- for every $\KG=(K,\aw,\bw)\in\Cat(k,l)$ and $\HG=(H,\cw,\dw)\in\Cat(k',l')$ and for every vertex overlap $f$, we have $\KG\cup_f\HG:=(K\cup_f H,\aw\cw,\bw\dw)\in\Cat(k+k',l+l')$ (for disjoint union, we get the tensor product),
\item \emph{restricted composition} -- for every $\KG=(K,\aw,\bw)\in\Cat(k,l)$ and $\HG=(H,\cw,\dw)\in\Cat(l,m)$ such that $\ker\bw=\ker\cw$, we have $\HG\KG\in\Cat(k,m)$,
\item \emph{involution} -- $\KG=(K,\aw,\bw)\in\Cat(k,l)$ implies $\KG^*=(K,\bw,\aw)\in\Cat(l,k)$.
\end{itemize}
In addition, consider $\KG=(K,\aw,\bw)\in\Cat(k,l)$, $\HG=(H,\cw,\dw)\in\Cat(l,m)$ and a~vertex overlap $f$ of $K$ and~$H$ such that $(b_i,c_i)\in f$ for all~$i$ (existence of which implies $\ker\bw=\ker\cw$). Then we define the \emph{$f$-composition} $\HG\cdot_f\KG:=(K\cup_f H,\aw,\dw)$.
\end{defn}

\begin{rem}
\label{R.skew}
Skew graph categories are indeed rigid monoidal $*$-categories. In particular, the following holds.
\begin{enumerate}
\renewcommand{\theenumi}{\alph{enumi}}
\item For every partition $p\in\Part(k)$, we have an identity morphism $\id_p\in\Cat(p,p)\subset\Cat(k,k)$ that can be created as an appropriate $f$-union of the identities $\Gid\in\Cat(1,1)$.
\item Skew graph categories are closed under left and right rotations. (Proof the same as in \cite[Lemma~1.8(i)]{Maa18}.)
\item Skew graph categories are closed under $f$-compositions. (Proof the same as in \cite[Lemma~1.8(ii)]{Maa18}.)
\end{enumerate}
\end{rem}

\begin{rem}\leavevmode
\label{R.skew2}
\begin{enumerate}
\renewcommand{\theenumi}{\alph{enumi}}
\item Any group-theoretical graph category is also a~skew graph category.
\item Any skew graph category contains~$\Ggrpth$ as a~union of $\Gpair$, $\Gid$, and~$\Guppair$.
\item Nonetheless, not every skew graph category is a~(group-theoretical) graph category as it may not be closed under all compositions.
\end{enumerate}
\end{rem}

Finally, we connect skew graph categories with their group-theoretical description.

\begin{thm}
\label{T.comb}
There is the following one-to-one correspondence between graph fibrations and skew graph categories. For any graph fibration~$F$, we define a~graph category
\begin{equation}
\label{eq.CatF}
\Cat:=\{(K,\aw,\bw)\mid(K,g_\aw^{-1}g_\bw)\in F\}=\{(K,\aw,\bw)\mid (K,g_\bw g_\aw^{-1})\in F\}.
\end{equation}
Conversely, we associate to any skew graph category~$\Cat$ a~graph fibration
\begin{equation}
\label{eq.FCat}
\begin{aligned}
F:={}&\{(K,g_\aw^{-1}g_\bw)\mid (K,\aw,\bw)\in\Cat\}=\{(K,g_\bw g_\aw^{-1})\mid (K,\aw,\bw)\in\Cat\}\\
  ={}&\{(K,g_\aw)\mid (K,\emptyset,\aw)\in\Cat\}.
\end{aligned}
\end{equation}
The graph fibration~$F$ is easy if and only if the skew graph category~$\Cat$ is a~group-theoretical graph category.
\end{thm}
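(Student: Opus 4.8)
The plan is to establish the bijection by verifying that the two assignments in Formulae \eqref{eq.CatF} and \eqref{eq.FCat} are mutually inverse, and that the defining closure properties match up on both sides. First I would check that the two descriptions in each formula genuinely coincide. For \eqref{eq.CatF}, this amounts to observing that $(K,g_\aw^{-1}g_\bw)\in F$ iff $(K,g_\bw g_\aw^{-1})\in F$: since $F(K)$ is a \emph{normal} subgroup (axiom (F1)), it is closed under conjugation, and $g_\bw g_\aw^{-1}=g_\aw(g_\aw^{-1}g_\bw)g_\aw^{-1}$. The equality of the three descriptions in \eqref{eq.FCat} is similar, using in addition that any $(K,\aw,\bw)\in\Cat$ can be left-rotated (Remark~\ref{R.skew}(b)) all the way down to $(K,\emptyset,\aw^*\bw)$, together with $g_{\aw^*\bw}=g_\aw^{-1}g_\bw$ from the reflection identity. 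I would also need to note that the defining pairs $(N_0,e)$, $(N_1,e)$ of a fibration correspond exactly to the required generators $\nullG$ and $\Gid$ (equivalently $\Gpair$ via rotation) of a skew category.

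Next I would show that if $F$ is a graph fibration, then $\Cat$ defined by \eqref{eq.CatF} is a skew graph category, by translating each fibration axiom into a categorical closure property. The key dictionary is: (F3), closure under $f$-unions of $F$, corresponds precisely to closure of $\Cat$ under $f$-unions of bilabelled graphs, since $(K\cup_f H, f_K(a)f_H(b))$ is exactly the element realizing $\KG\cup_f\HG=(K\cup_f H,\aw\cw,\bw\dw)$ with associated group element $f_K(g_\aw^{-1}g_\bw)f_H(g_\cw^{-1}g_\dw)$; here one has to check the group element of the union factors correctly, which is where the homomorphism property of $f_K,f_H$ and the fact that $K,H$ sit inside $K\cup_f H$ on disjoint letter sets (up to the overlap) is used. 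Involution closure is immediate since swapping $\aw,\bw$ sends $g_\aw^{-1}g_\bw$ to its inverse, and $F(K)$ is a subgroup. Restricted composition is the subtlest: given $\KG=(K,\aw,\bw)$, $\HG=(H,\cw,\dw)$ with $\ker\bw=\ker\cw$, I would realize $\HG\KG$ as an $f$-composition $\HG\cdot_f\KG=(K\cup_f H,\aw,\dw)$ (Remark~\ref{R.skew}(c)), so that closure under restricted composition follows from closure under $f$-unions once one verifies the group element $g_\aw^{-1}g_\dw$ lies in $F(K\cup_f H)$; this uses that $g_\aw^{-1}g_\bw$ and $g_\cw^{-1}g_\dw$ both embed into $F(K\cup_f H)$ and that $f$ identifies $b_i$ with $c_i$, so that $g_\bw$ and $g_\cw$ become the same element there and telescope.

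Conversely, given a skew graph category $\Cat$, I would verify that $F$ from \eqref{eq.FCat} satisfies (F1)--(F3). That each $F(K)$ is a normal subgroup is the heart of (F1): closure under products comes from $f$-unions (taking $f$ to be the full diagonal overlap identifying the vertex sets of two copies of $K$), inverses come from involution, and normality comes from conjugating by generators $g_v$, which is realized by a suitable union with the single-vertex graph. Axiom (F2), $\Aut K$-invariance, follows because relabelling $\aw,\bw$ by an automorphism of $K$ leaves the bilabelled graph unchanged up to isomorphism, hence still in $\Cat$. Axiom (F3) is again the $f$-union closure of $\Cat$ read in the group-element language. The final clause, that $F$ is easy (i.e. satisfies (F4)) iff $\Cat$ is group-theoretical (i.e. closed under quotients), I would prove by matching the quotient map $q_\pi\colon K\to K/\pi$ on a bilabelled graph with the induced homomorphism $q_\pi$ on $\Z_2^{*V(K)}\to\Z_2^{*V(K/\pi)}$ sending $g_\aw^{-1}g_\bw$ to $q_\pi(g_\aw^{-1}g_\bw)=g_{q_\pi(\aw)}^{-1}g_{q_\pi(\bw)}$; closure of $\Cat$ under taking the graph quotient $(K/\pi,q_\pi(\aw),q_\pi(\bw))$ is then literally the statement $q_\pi(F(K))\subset F(K/\pi)$.

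The main obstacle I expect is the bookkeeping in the restricted-composition step and in proving normality of $F(K)$: in both places one must be careful that the \emph{labelling} words $\aw,\bw,\cw,\dw$ are chosen compatibly (sharing exactly the letters forced by the overlap $f$ and no others), so that the group-element computation in $\Z_2^{*V(K\cup_f H)}$ really telescopes as claimed rather than accidentally merging or splitting blocks. This is the graph-categorical analogue of the labelling subtleties already flagged for connected tensor products and restricted compositions in Section~\ref{secc.skewcat}, and it is precisely the point where the correspondence could break if the union and composition operations were not set up to respect the block structure. Once the labelling conventions are pinned down, each verification reduces to a short computation in the free product $\Z_2^{*V}$ using only that the relevant $F(K)$ are normal subgroups closed under the embeddings $f_K,f_H$.
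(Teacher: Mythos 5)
Your verification steps (well-definedness of both assignments and the easy/group-theoretical equivalence) follow the paper's dictionary essentially verbatim, but there is a genuine gap: you announce in your first sentence that you will prove the two assignments are mutually inverse, and then never carry this out, and none of the steps you outline would yield it. Writing $\Phi$ for the map $(K,\aw,\bw)\mapsto(K,g_\aw^{-1}g_\bw)$, the delicate half is the inclusion $\Phi^{-1}(\Phi(\Cat))\subset\Cat$: since $\Phi$ is far from injective, you must show that a skew graph category is saturated under the fibres of $\Phi$, i.e.\ that $(K,\aw,\bw)\in\Cat$ together with $g_\aw^{-1}g_\bw=g_\cw^{-1}g_\dw$ forces $(K,\cw,\dw)\in\Cat$. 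Without this, all you have established is that both maps are well defined and respect easiness; two distinct skew categories could a priori map to the same fibration, and the round trip $\Cat\mapsto F\mapsto\Phi^{-1}(F)$ could produce a strictly larger category, so the phrase ``one-to-one correspondence'' in the statement remains unproven. (The other round trip, $F\mapsto\Cat\mapsto F$, is indeed automatic, since every element of a fibre equals $g_\aw$ for some word $\aw$.)

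The paper closes this gap with a concrete combinatorial argument that you would need to supply: if $g_\aw^{-1}g_\bw=g_\cw^{-1}g_\dw$ in $\Z_2^{*V(K)}$, then $(K,\cw,\dw)$ differs from $(K,\aw,\bw)$ only by rotations and by insertions/deletions of pairs $vv$, $v\in V(K)$, in the input/output words (this is where one uses that equality in the free product $\Z_2^{*V(K)}$ is generated by cancellation of repeated letters); rotations are covered by Remark~\ref{R.skew}(b), insertion of a pair $vv$ is an $f$-union with $\Gpair$ (or $\Guppair$, $\Gid$), and deletion is a contraction, i.e.\ a composition with $\Gid\otimes\cdots\otimes\Guppair\otimes\cdots\otimes\Gid$ plus suitable rotations. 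A secondary, smaller imprecision: in your $f$-union step, the group element attached to $\KG\cup_f\HG=(K\cup_f H,\aw\cw,\bw\dw)$ is $g_{\aw\cw}^{-1}g_{\bw\dw}=g_\cw^{-1}g_\aw^{-1}g_\bw g_\dw$, not $f_K(g_\aw^{-1}g_\bw)f_H(g_\cw^{-1}g_\dw)=g_\aw^{-1}g_\bw g_\cw^{-1}g_\dw$; these agree only after conjugating by $g_\cw$, so normality of $F(K\cup_f H)$ is needed at this point as well (exactly as in the paper's computation), not merely the embedding property of $f_K,f_H$ and disjointness of letters.
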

\begin{proof}
Take a~graph fibration~$F$ and construct~$\Cat$ as in \eqref{eq.CatF}. The equality in \eqref{eq.CatF} follows from $F(K)$ being normal. Now, we prove that $\Cat$ is a~skew graph category, that is, closed under the skew category operations.

Unions: Take $\KG=(K,{\bf a},{\bf b})$ and $\HG=(H,{\bf c},{\bf d})$ from $\Cat$ and a~vertex overlap~$f$. We have $(K,g_\aw^{-1}g_\bw)\cup_f(H,g_\dw g_\cw^{-1})=(K\cup_f H,\ppen g_\aw^{-1}g_\bw g_\dw g_\cw^{-1})\in F$ and, from normality of $F(K\cup_f H)$, also $(K\cup_f H,\ppen g_\cw^{-1}g_\aw^{-1}g_\bw g_\dw)=(K\cup_f H,\ppen g_{\aw\cw}^{-1}g_{\bw\dw})\in F$, so $(K\cup_f H,\ppen \aw\cw,\bw\dw)\in\Cat$.

Involution follows from $F(K)$ being closed under the group inversion, so if $(K,g_\aw^{-1}g_\bw)\in F$ then also $(K,g_\bw^{-1}g_\aw)\in F$.

Composition: Take $\KG=(K,\aw,\bw)$, $\HG=(H,\cw,\dw)$, so $(K,g_\aw^{-1}g_\bw)\in F$ and $(H,g_\cw^{-1}g_\dw)\in F$. Suppose first that $\ker\bw=\ker\cw$. Then we can choose a~vertex overlap $f=\{(b_i,c_i)\}_i$, so $f_K(\bw)=f_H(\cw)$. Then $\HG\KG=(K\cup_f H,\aw,\dw)$, which is indeed in~$\Cat$ since we have $(K,g_\aw^{-1}g_\bw)\cup_f(H,g_\cw^{-1}g_\dw)=(K\cup_f H,g_\aw^{-1}g_\dw)$.

Now, let us assume that $F$ is easy, i.e.\ invariant with respect to quotients and let us prove that $\Cat$ is closed under all compositions. So, take arbitrary $\KG=(K,\aw,\bw)\in\Cat(k,l)$, $\HG=(H,\cw,\dw)\in\Cat(l,m)$. Then we have $(K,g_\aw^{-1}g_\bw)\sqcup(H,g_\cw^{-1}g_\dw)=(K\sqcup H,\ppen g_\aw^{-1}g_\bw g_\cw^{-1}g_\dw)\in F$. Now, we can take the quotient of $K\sqcup H$ by identifying the vertices $b_i$ with $c_i$ for every $i$. This then exactly corresponds to the composition $\HG\KG$. Finally, since $F$ is closed under quotients, then $\Cat$ is also closed under quotients.

The converse direction is very straightforward. Consider a~skew graph category~$\Cat$ and prove that \eqref{eq.FCat} defines a~graph fibration. First of all, note that the equalities in \eqref{eq.FCat} follow from skew graph categories being closed under rotations. Now, consider some $K\in F$ and prove that $F(K)$ is a normal subgroup of $\Z_2^{*V(K)}$. First, we prove that $F(K)$ is closed with respect to multiplication. Take $\aw,\bw$ such that $(K,\emptyset,\aw)$, $(K,\emptyset,\bw)\in\Cat$, so $g_\aw,g_\bw\in F(K)$. Then we must have $(K,\emptyset,\aw)\cup_{\rm id}(K,\emptyset,\bw)=(K,\emptyset,\aw\bw)$, so $g_\aw g_\bw\in F(K)$. Secondly, we prove that $F(K)$ is closed on taking inverses. Considering $\KG=(K,\emptyset,\aw)\in\Cat$, we have $\KG^*=(K,\aw,\emptyset)\in\Cat$, so $g_\aw^{-1}\in F(K)$. Finally, the normality. Taking again $\KG=(K,\emptyset,\aw)\in\Cat$ and $v\in V(K)$, then we can construct $\KG\cup_f\Gid=(K,v,\aw v)$, where $f$ is the vertex overlap identifying the vertex of $\Gid$ with $v$. This graph then corresponds to $g_v^{-1}g_\aw g_v$.

Proving that $F$ is closed under $f$-unions and, assuming that $\Cat$ is a~group-theoretical graph category, proving that $F$ is closed under taking quotients, is similarly straightforward.

Finally, we prove that those assignments $\Cat\mapsto F$ and $F\mapsto\Cat$ are inverse to each other, which means that the correspondence is indeed one-to-one. Denote by~$\Phi$ the map $(K,\aw,\bw)\mapsto(K,g_\aw^{-1}g_\bw)$. The correspondence can then be formulated as an assignment $\Cat\mapsto F:=\Phi(\Cat)$ and $F\mapsto\Cat:=\Phi^{-1}(F)$. Note that $\Phi$ is a~well defined mapping, but it is not injective. So, it remains to prove that $\Phi^{-1}(\Phi(\Cat))=\Cat$. In other words, we have to prove that, for every skew category of graphs, $\KG:=(K,\aw,\bw)\in\Cat$ implies $\KG':=(K,\cw,\dw)\in\Cat$ whenever $g_\aw^{-1}g_\bw=g_\cw^{-1}g_\dw$. The equality means that $\KG'$ can differ from $\KG$ by rotation~-- but this is fine as $\Cat$ is closed under rotations -- and by adding or removing some pair $vv$, where $v\in V(K)$, to the input/output sequence. Adding of such a pair can be achieved by computing an $f$-union with~$\Gpair$ (or $\Guppair$ or~$\Gid$), removing can be achieved by \emph{contraction} i.e.\ composing with $\Gid\otimes\cdots\Gid\otimes\Guppair\otimes\Gid\otimes\cdots\otimes\Gid$ (some additional rotations may be needed).
\end{proof}

\begin{rem}[On loops in graphs]
\label{R.loops}
Often, people are interested in simple graphs, that is, graphs without loops. So, there might be a question: Is it essential to consider graphs with loops in the definition of a graph category and graph fibration or can one do the same also without loops?

The easiest way is to actually consider graphs with a loop at every vertex. Everything works perfectly fine if we implicitly assume that every graph appearing in this article has a loop at every vertex. Some statements can even be formulated in a simpler way under this assumption. We will comment on this at the particular places later on in this article. Note however, that this assumption also means that we are slightly modifying the definition of graph categories and graph fibrations since now $N_1$ -- the graph, which is by assumption contained in every graph fibration and every graph category -- denotes the graph with a single vertex \emph{and a loop}.

Now, what if we want to restrict to truly simple graphs only, i.e.\ graphs that have literally no loops. Simple bilabelled graphs are closed under all the operations of skew graph categories. So, in case of skew graph categories, we can again just restrict to simple graphs. However, simple graphs are not closed under taking quotients. So, in case we want to work with ordinary graph categories or, equivalently, easy graph fibrations, then we actually have to construct a quotient category rather than a subcategory. If a~graph operation on simple graphs yields a~non-simple graph, then just declare the result to be equal to zero.

Note that in the easy case, the distinction between the loops-everywhere and no-loops-at-all approaches may not be entirely cosmetic. As we just mentioned, the category operations will act differently. Also the notion of a graph homomorphism differs in those two situations. On the other hand, in the non-easy situation, we deal only with injective homomorphisms, where this distinction disappears.
\end{rem}

\subsection{Full fibrations}

\begin{defn}
A graph fibration~$F$ is called \emph{full} if $K\in F$ for all~$K$.
\end{defn}

\begin{lem}
\label{L.closei}
Let $F$ be a~graph fibration. Suppose $K,H\in F$ are graphs such that there is an embedding (an injective homomorphism) $\iota\colon H\to K$. Then $\iota(F(H))\subset F(K)$.
\end{lem}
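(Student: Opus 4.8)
The plan is to express the embedding $\iota\colon H\to K$ as a vertex overlap together with a quotient, and then invoke the two defining closure properties of a graph fibration: closure under $f$-unions (F3) and, where needed, the normality of the fibres (F1). The key observation is that an injective homomorphism $\iota$ identifies $H$ with a subgraph $\iota(H)\subset K$, and the task is to push an arbitrary element $a\in F(H)$ into $F(K)$ along~$\iota$.

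First I would set up the vertex overlap. Since $\iota$ is injective, I can form the vertex overlap $f\subset V(H)\times V(K)$ given by $f=\{(v,\iota(v))\mid v\in V(H)\}$. Because $\iota$ is a homomorphism, every edge of $H$ maps to an edge of $K$, so when we form the $f$-union $H\cup_f K$, the copy of $H$ is glued onto $K$ along its image with no new constraints violated; in fact $H\cup_f K\cong K$ as graphs (the image $\iota(H)$ is already a subgraph of~$K$, so identifying $v$ with $\iota(v)$ adds no vertices and only possibly merges edges that are already present). Under this identification, $f_H\colon V(H)\to V(H\cup_f H)=V(K)$ is precisely~$\iota$.

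Now I apply the axioms. By (F3), both $F(H)$ and $F(K)$ embed into $F(H\cup_f K)$ via $f_H$ and~$f_K$; concretely, for $a\in F(H)$ we have $(H,a)\cup_f(K,e)=(H\cup_f K,\ f_H(a)f_K(e))=(K,\iota(a))\in F$, using that $(K,e)\in F$ since $F(K)$ is a subgroup containing the identity. Under the graph isomorphism $H\cup_f K\cong K$, this says exactly $\iota(a)\in F(K)$. Since $a\in F(H)$ was arbitrary, we conclude $\iota(F(H))\subset F(K)$, as desired.

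The step I expect to require the most care is the identification $H\cup_f K\cong K$ and the verification that $f_H$ really coincides with~$\iota$ under this isomorphism: one must check that no vertices of $H$ are left unmatched (which holds because $f$ is total on $V(H)$ by injectivity of~$\iota$) and that edge multiplicities are correctly ignored when the image edges of $H$ coincide with existing edges of~$K$ (which is exactly the convention already built into the definitions of quotient and $f$-union). Everything else is a direct application of (F3) together with the subgroup property from~(F1).
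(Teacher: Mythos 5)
Your proof is correct and takes essentially the same route as the paper, which simply observes that $K=K\cup_\iota H$ and invokes axiom (F3). The only cosmetic difference is that you apply the operational form of (F3) to the pair $(H,a)$ and $(K,e)$, whereas the paper cites the embedding statement of (F3) directly; these are equivalent, and your careful check that $H\cup_f K\cong K$ with $f_H=\iota$ is exactly the content of the paper's one-line identification.
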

\begin{proof}
We have $K=K\cup_\iota H$, so the lemma follows from axiom~(F3).
\end{proof}

\begin{prop}[Alternative definition of full graph fibrations]
\label{P.altfull}
A full graph fibration~$F$ is equivalently a~collection of normal subgroups $F(K)\trianglelefteq\Z_2^{*V(K)}$ for all graphs~$K$, which is closed under injective homomorphisms. That is, for every injective homomorphism of two graphs $\iota\colon H\to K$, we have $\iota(F(H))\subset F(K)$.
\end{prop}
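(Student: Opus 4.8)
The underlying data of a full graph fibration is, by axiom (F1) together with fullness, precisely an assignment of a normal subgroup $F(K)\trianglelefteq\Z_2^{*V(K)}$ to every graph~$K$; the required base conditions $(N_0,e),(N_1,e)\in F$ are then automatic, since any subgroup contains the identity~$e$. So the content of the proposition is that, for such data, axioms (F2) and (F3) are \emph{jointly} equivalent to closure under injective homomorphisms. The plan is therefore to prove the two implications between these axioms and the closure property, noting that the assignment $F\leftrightarrow\{F(K)\}$ is the same in both formulations, so the correspondence is trivially bijective once the axioms match.

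For the direction ``fibration $\Rightarrow$ closure'', there is essentially nothing to do beyond invoking Lemma~\ref{L.closei}: given an injective homomorphism $\iota\colon H\to K$, fullness guarantees $H,K\in F$, and the lemma yields $\iota(F(H))\subset F(K)$ directly.

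For the converse, I would start from a collection of normal subgroups closed under injective homomorphisms and verify (F2) and (F3). Axiom (F2) is easy: any automorphism~$\phi$ of~$K$ is in particular an injective homomorphism $K\to K$, so closure gives $\phi(F(K))\subset F(K)$, and applying the same to the automorphism~$\phi^{-1}$ gives the reverse inclusion, hence $F(K)=\phi(F(K))$. The crux is (F3). Given graphs $K,H$ and a vertex overlap~$f$, I would observe that the inclusions $f_K\colon V(K)\to V(K\cup_f H)$ and $f_H\colon V(H)\to V(K\cup_f H)$ are injective graph homomorphisms: they are injective because $f$, being an injective partial function, identifies each vertex of~$K$ with at most one vertex of~$H$ and never merges two vertices of~$K$ with each other, and they are homomorphisms because the edges of~$K$ and of~$H$ persist in the quotient $K\cup_f H$. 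Closure then gives $f_K(F(K))\subset F(K\cup_f H)$ and $f_H(F(H))\subset F(K\cup_f H)$, and since $F(K\cup_f H)$ is a subgroup (hence closed under multiplication), $f_K(a)f_H(b)\in F(K\cup_f H)$ for all $a\in F(K)$, $b\in F(H)$, which is exactly what (F3) asserts.

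I expect the only point requiring genuine attention to be the verification that $f_K$ and $f_H$ are injective graph homomorphisms, so that the closure hypothesis applies to them; once this is in place, the subgroup structure of $F(K\cup_f H)$ supplies the product and the remaining steps are a direct translation between the two formulations.
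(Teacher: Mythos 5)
Your proof is correct and follows essentially the same route as the paper: the forward direction is Lemma~\ref{L.closei} plus fullness, and the converse checks (F2) via automorphisms and (F3) by applying the closure hypothesis to the injective homomorphisms $f_K$ and $f_H$. Your additional care (verifying that $f_K$, $f_H$ are indeed injective homomorphisms, that the product $f_K(a)f_H(b)$ lies in the subgroup $F(K\cup_f H)$, and that the base conditions on $N_0$, $N_1$ are automatic) only makes explicit what the paper leaves implicit.
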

\begin{proof}
	The left-right implication follows from Lemma \ref{L.closei}. For the right-left implication, we get axiom~(F2) by choosing $\phi$ to be the graph automorphism. To check (F3), take any pair of fibres $K,H\in F$ and their vertex overlap $f$. Since $F$ contains all graphs as fibres, it must contain also $K\cup_f H$. Choosing $\iota:=f_K$ in our assumption, we directly have that $F(K)$ embed into $F(K\cup_fH)$ by $f_K$ and the same holds for $H$.
\end{proof}

\begin{prop}[Alternative definition of easy full graph fibrations]
\label{P.alteasy}
An easy full graph fibration~$F$ is equivalently a~collection of normal subgroups $F(K)\trianglelefteq\Z_2^{*V(K)}$ for all graphs~$K$, which is closed under all homomorphisms. That is, for every homomorphism of two graphs $\phi\colon H\to K$, we have $\phi(F(H))\subset F(K)$.
\end{prop}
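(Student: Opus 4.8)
The plan is to reduce both directions to Proposition~\ref{P.altfull} together with the elementary fact that every graph homomorphism factors as a quotient map followed by an embedding. So the first step I would carry out is this factorization. Given any graph homomorphism $\phi\colon H\to K$, let $\pi$ be the partition of $V(H)$ whose blocks are the non-empty fibres of $\phi$, i.e.\ $u$ and $v$ lie in the same block exactly when $\phi(u)=\phi(v)$. Then $\phi=\iota\circ q_\pi$, where $q_\pi\colon H\to H/\pi$ is the canonical quotient and $\iota\colon H/\pi\to K$ sends the block $[u]$ to $\phi(u)$. By construction $\iota$ is injective, and I would check that $\iota$ is a genuine graph homomorphism: an edge of $H/\pi$ between $[u]$ and $[v]$ arises from an edge of $H$ between some $u'\sim u$ and $v'\sim v$, and since $\phi$ is a homomorphism with $\phi(u')=\phi(u)$ and $\phi(v')=\phi(v)$, this produces the required edge of $K$.

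For the left-to-right implication I would take an easy full graph fibration~$F$ and an arbitrary homomorphism $\phi\colon H\to K$, and simply chase the factorization. Since $F$ is easy, axiom~(F4) gives $q_\pi(F(H))\subset F(H/\pi)$. Since $F$ is full and $\iota$ is an injective homomorphism, Lemma~\ref{L.closei} (equivalently Proposition~\ref{P.altfull}) gives $\iota(F(H/\pi))\subset F(K)$. Composing, $\phi(F(H))=\iota(q_\pi(F(H)))\subset\iota(F(H/\pi))\subset F(K)$, which is precisely closure under all homomorphisms.

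For the right-to-left implication I would start from a collection of normal subgroups $F(K)$, one for every graph~$K$, that is closed under all homomorphisms. Closure under all homomorphisms includes closure under the injective ones, so Proposition~\ref{P.altfull} already shows that $F$ is a full graph fibration; it remains only to verify easiness, axiom~(F4). But for any graph~$K$ and any partition~$\pi$ of $V(K)$, the quotient map $q_\pi\colon K\to K/\pi$ is itself a graph homomorphism, so applying the hypothesis to $q_\pi$ yields $q_\pi(F(K))\subset F(K/\pi)$, which is exactly~(F4). (Note that fullness guarantees $K/\pi\in F$, so this statement even makes sense.)

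The only genuine content is the factorization lemma; once it is in place, both directions are immediate. The step that needs a little care is verifying that the induced map $\iota$ on the quotient is still a homomorphism and not merely a map of vertex sets, since this is where the definition of the quotient graph $H/\pi$ and the homomorphism property of $\phi$ interact. Everything else is a direct combination of~(F4) for the surjective part and Lemma~\ref{L.closei} for the injective part.
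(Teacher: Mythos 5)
Your proof is correct and follows essentially the same route as the paper: factor an arbitrary homomorphism as a quotient followed by an embedding, apply axiom~(F4) to the quotient and Lemma~\ref{L.closei} (via Proposition~\ref{P.altfull}) to the embedding, and conversely reduce to Proposition~\ref{P.altfull} plus the observation that $q_\pi$ is itself a homomorphism. Your explicit verification that the induced injective map $\iota$ on the quotient is a genuine graph homomorphism is a detail the paper leaves implicit, but it is the same argument.
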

\begin{proof}
For the left right implication, we just decompose the homomorphism as $\phi=\iota\circ q$, where $q$ is a~surjective homomorphism (that is, a~quotient) and $\iota$~is injective (that is, an embedding). Now we just use axiom~(F4) to deal with~$q$ and Lemma~\ref{L.closei} to deal with~$\iota$.

Most of the right-left multiplication was proven in \ref{P.altfull}, we just have to handle the easiness, i.e.\ axiom (F4). This follows by taking $\phi=q_\pi$.
\end{proof}

%

\begin{prop}
\label{P.easyK}
Let $B$ be a~set of graphs containing $N_0$ and~$N_1$ and closed under quotients and arbitrary $f$-unions. If $B$ contains some graph with at least one edge, then $B$ contains all graphs with loops at every vertex.
\end{prop}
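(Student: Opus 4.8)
The plan is to reduce everything to a single building block — the graph $E$ consisting of two vertices, each carrying a loop, joined by one edge — together with the single loop-vertex $M$, and then to assemble an arbitrary loops-everywhere graph as a quotient of disjoint copies of these two pieces.

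First I would produce the building blocks from the hypothesis. Let $G\in B$ have a non-loop edge $uv$; note the hypothesis must refer to a genuine edge between \emph{distinct} vertices, since loops alone can never be turned into a connecting edge by quotients or $f$-unions. Collapsing all of $V(G)$ onto a single block gives, as a quotient, the one-vertex graph $M$ with a loop (the edge $uv$ becomes the loop), so $M\in B$. Next, collapsing $V(G)$ onto the two-block partition with $u$ and $v$ in separate blocks yields a two-vertex graph carrying the connecting edge coming from $uv$; taking its $f$-union with one copy of $M$ glued to each of the two vertices forces a loop at both endpoints, producing exactly $E$. Hence $E\in B$.

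The core step is the assembly. Given a target graph $T$ with a loop at every vertex, vertex set $\{v_1,\dots,v_n\}$, and non-loop edge set $\mathcal E$, I would form the disjoint union $D$ of one copy $E_e$ of $E$ for each $e\in\mathcal E$, together with one copy of $M$ for each vertex of $T$ lying in no edge; this $D$ lies in $B$ because disjoint union is the empty $f$-union. Now quotient $D$ by the partition $\pi$ that groups together all copies of each $v_i$. I claim $D/\pi=T$: the vertex classes are exactly $v_1,\dots,v_n$; two distinct classes $[v_i],[v_j]$ become adjacent precisely when some copy of $v_i$ is adjacent to some copy of $v_j$ in $D$, which happens iff $\{v_i,v_j\}\in\mathcal E$; and each class inherits a loop from any of its copies. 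Since $B$ is closed under quotients, $T\in B$.

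The step I expect to require the most care is verifying that this quotient creates neither spurious edges nor missing loops — that is, that the only adjacencies surviving in $D/\pi$ are those forced by $\mathcal E$. This rests on the fact that in $D$ the only non-loop edges live inside the individual pieces $E_e$, so no unintended adjacency between distinct classes can appear; the ``ignoring edge multiplicities'' convention in the definition of the quotient is what makes the loop bookkeeping harmless. The construction of $E$ — creating loops at the endpoints — is also the place where the loops-everywhere convention enters, and is exactly what pins the conclusion down to graphs with loops at every vertex rather than to all graphs.
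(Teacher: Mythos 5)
Your proof is correct and takes essentially the same approach as the paper: both reduce to the two-vertex graph with a connecting edge and loops at both ends (the paper creates the loops via an $f$-union of the edge graph with itself followed by a quotient, you by gluing on copies of the one-vertex looped graph $M$ obtained as the full quotient of $G$), and then assemble an arbitrary loops-everywhere graph from copies of this block (the paper by overlapping $f$-unions, you by a disjoint union followed by a single quotient). Your explicit observation that the hypothesized edge must join two \emph{distinct} vertices -- since loops can never become connecting edges under quotients or $f$-unions -- is the correct reading of the statement, and a subtlety the paper's own proof leaves implicit.
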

So, if we are computing with the implicit assumption that every graph has a loop at every vertex (see Remark~\ref{R.loops}), then a graph fibration is always full unless it contains edgeless graphs only.
\begin{proof}
If there is any $H\in B$ containing at least one edge, then we can construct a~graph with two vertices connected by an edge as its quotient. Possibly missing loops at those two vertices can be added using some $f$-union with itself and then performing a quotient. From the full graph on two vertices (with loops everywhere), any graph~$K$ (with loops everywhere) can be constructed using $f$-unions.
\end{proof}

\subsection{Examples}
Let $V:=\{\la_1,\la_2,\dots\}$ be an infinite countable set.

\begin{ex}[Group-theoretical categories of partitions]
\label{E.easypart}
Consider $A$ an $sS_V$-invariant normal subgroup of~$\Z_2^{*V}$. Let $\Cat_A$ be the associated category of partitions. Denote by $V_n:=\{\la_1,\dots,\la_n\}$ the first $n$~letters in~$V$ and by $A_n:=A\cap\Z_2^{*V_n}$ the corresponding subgroup of~$A$ using only those letters $\la_1,\dots,\la_n$. Then we can define an easy graph fibration 
$$F_A:=\{(N_n,g)\mid g\in A_n,\;n\in\N_0\},$$
where $N_n$ is the edgeless graph with $n$ vertices identified with the letters $\la_1,\dots,\la_n$. The associated graph category can then be identified with the category of partitions~$\Cat_A$.

Conversely, as we already mentioned in Remark~\ref{R.grpthpart}, a group-theoretical graph category $\Cat$ where all graphs have no edges can always be identified with a group-theoretical category of partitions. Such a category then corresponds to some graph fibration $F$ with fibres $A_k:=F(N_k)$. Axiom (F3) in this case means that the groups $A_k$ are closed under injective homomorphisms $A_k\to A_l$, $k\le l$ mapping generators to generators, so we have a chain of embeddings $\cdots A_k\subset A_{k+1}\subset\cdots$. Axiom (F4) means that the groups are closed also under the surjective homomorphisms $A_l\to A_k$, $k\le l$ mapping generators to generators, so, in particular, each group is just a subgroup of the union $A:=\bigcup_kA_k$ generated by $\{\la_1,\dots,\la_k\}$.
\end{ex}

\begin{ex}[Skew categories of partitions]
Let $A$ be an $S_V$-invariant normal subgroup of~$\Z_2^{*V}$. Then the above mentioned construction defines a~(possibly non-easy) graph fibration~$F_A$. The associated skew graph category then coincides with the skew category of partitions corresponding to~$A$.
\end{ex}

\begin{ex}[``New'' skew categories of partitions]
\label{E.new}
In this case, the converse does not hold. A~skew graph category $\Cat$ containing edgeless graphs only may not be closed under removing isolated vertices, in which case we cannot identify it with a skew category of partitions in the sense of \cite{Maa18}. Let us look what happens on the level of the group-theoretical description. Let $F$ be the corresponding graph fibration with fibres $A_k:=F(N_k)$. Again, axiom (F3) means that we have a chain of embeddings $\cdots A_k\subset A_{k+1}\subset\cdots$. But now we are missing axiom (F4) which would allow us to project from right to left. So, $F$ may not be determined by a single group $A\trianglelefteq\Z_2^{*V}$.

Let us bring a concrete example suggested to the author by Laura Maaßen: Consider the skew graph category $\Cat$ generated by $\GLabc$. That is, $A_3$ is the normal subgroup of $\Z_2^{*3}$ generated by $\la_1\la_2\la_3$ and its permutations; moreover, $A_k$ for any $k\ge 3$ is the normal subgroup of $\Z_2^{*k}$ generated by $\la_i\la_j\la_k$ with $i,j,k\in\{1,\dots,k\}$ mutually distinct. For $k<3$ is $A_k$ trivial. We can again try to take $A:=\bigcup_k A_k$, but this no more determines all the sets $A_k$. Indeed, we have $\la_1\la_2=(\la_1\la_3\la_4)\cdot(\la_4\la_3\la_2)\in A_4\subset A$, but obviously $\la_1\la_2\not\in A_2$! (And one can actually also prove that $\la_1\la_2\not\in A_3$.)

We can look on this example in terms of pictures. Starting with the generator $\GLabc$, we can do the connected tensor product with itself to obtain
$\Graph{
\GV 0.5:1,3.5,6;
\GV 1:3.5;
\GS 0.5/0:1/,6/;
\GS 0.5/0.2:3.5/3,3.5/4;
\GS 1/0.4:3.5/2,3.5/5;
\GS 0.2/0:3/,4/;
\GS 0.4/0:2/,5/;}$.
Now, a composition with
$\Graph{
\GV 0.5:1,3.5,6;
\GV 0:3.5;
\GS 0/1:1/,6/;
\GS 0.5/0.8:3.5/3,3.5/4;
\GS 0/0.6:3.5/2,3.5/5;
\GS 0.8/1:3/,4/;
\GS 0.6/1:2/,5/;
}$
gives us
$\Graph{
\GV 0.5:1,2;
\GV 1:1.2,1.8;
\GS 0.5/0:1/,2/;
}$.
So, in the end, 
$\Graph{
\GV 0.5:1,2;
\GV 1:1.2,1.8;
\GS 0.5/0:1/,2/;
}\in\Cat$,
but
$\Graph{
\GV 0.5:1,2;
\GS 0.5/0:1/,2/;
}\not\in\Cat.$

Modifying the definition of a skew graph category, namely by removing the assumption that the category is closed under removing empty blocks, we can ``repair'' the correspondence. Using this modified definition, it will again hold that skew categories of partitions exactly correspond to the skew graph categories containing edgeless graphs only. It depends on the application whether it is more convenient to use the former simpler definition, where a skew category is determined by a single group $A\trianglelefteq\Z_2^{*V}$, or whether one needs the more general definition with a sequence of groups. Note that the latter may be necessary to describe certain quantum group semidirect products $\hat\Gamma\rtimes S_n$, see Remark~\ref{R.converse}.
\end{ex}

\begin{ex}[Homogeneous graph fibration]
Let $A$ be an $S_V$-invariant normal subgroup of~$\Z_2^{*V}$ and let $B$ be a~set of graphs closed under arbitrary $f$-unions. For every graph $K\in B$, consider some injection $\iota_K\colon V(K)\to V$. Denote $A_K:=\iota_K^{-1}(A)$. Since $A$ is $S_V$-invariant, it does not matter, how precisely we define the injection~$\iota_K$. Then we can define a~graph fibration $F_{A,B}$ by
$$F_{A,B}(K)=\begin{cases}A_K&\text{if }K\in B,\\\emptyset&\text{otherwise.}\end{cases}$$
The associated graph category can be described as follows
$$\Cat_{A,B}=\{\KG=(K,\aw,\bw)\mid K\in B,\;\ker(\aw,\bw)\in\Cat_A\}.$$
\end{ex}

\begin{ex}[Easy full homogeneous graph fibration]
	Let $A$ be $sS_V$-invariant normal subgroup of~$\Z_2^{*V}$. Then we can construct an easy full graph fibration $F_{A,{\rm full}}:=F_{A,B}$ taking $B$ to be the set of all graphs. So, $F_{A,{\rm full}}(K)=A_K$ for every graph~$K$.
\end{ex}

In the following example, we construct an easy full graph fibration that is not determined by a~single normal subgroup $A\trianglelefteq\Z_2^{*V}$.

\begin{ex}[There is more!]
\label{E.abab}
For every graph~$K$, we define $F(K)$ to be the normal subgroup of~$\Z_2^{*V(K)}$ generated by words of the form $abab$, where $a,b$ are any adjacent vertices in~$K$, so
$$F(K)=\llangle abab\mid \{a,b\}\in E(K)\rrangle\trianglelefteq\Z_2^{*V(K)}.$$
Here, by the double brackets, we denote the normal subgroup generated by the given generators. Now since adjacency is preserved under $f$-unions and quotients, we have that $F$ is indeed closed under those operations and hence it is an easy graph fibration. 
\end{ex}

\subsection{Generators of categories}
\label{secc.gens}
In this section, we study group-theoretical graph categories and skew graph categories in terms of generators. We answer the following two questions. First, given a~set of generators, find an explicit description of the group-theoretical category or the skew category it generates. Secondly, we characterize group-theoretical categories among all graph categories in terms of their generators.

\begin{defn}
Let $S$ be a~set of bilabelled graphs. We denote by $\langle S\rangle$ resp.\ $\langle S\rangle\grpth$ resp.\ $\langle S\rangle^{\text{skew}}$ the graph category resp.\ group-theoretical graph category resp.\ skew graph category \emph{generated} by~$S$. That is, the smallest category containing~$S$.
\end{defn}

\begin{defn}
Let $S$ be a~set of graphs. We define its \emph{closure under arbitrary $f$-unions} to be the smallest set of graphs containing~$S$ and closed under $f$-unions.
\end{defn}

In this article, we will typically consider generating sets $S$ that contain the one-vertex graph $N_1$ as this is required by the definition of a graph fibration. (We should formally also always include the empty graph $N_0$, but this has of course virtually no impact on the closure under $f$-unions.)

\begin{ex}
Take the graph of two adjacent vertices $K_2=\Gedge$. Then the closure of $\{N_1,K_2\}$ is simply the set of all graphs (as was already used in the proof of Proposition~\ref{P.easyK}).

Take now the triangle $K_3=\Gtriangle$. The closure of $\{N_1,K_3\}$ is the set of all graphs, where every edge is incident to some triangle. (Indeed, take such a graph $G$ and denote by $n$ the number of its vertices. We can reconstruct $G$ as follows. By repeated unions of $N_1$, construct the edgeless graph $N_n$. Now, go through every edge in $G$ and use the $f$-union to add the whole triangle this edge is incident with to our graph we are constructing (it does not matter which one if there is more). This way, we add all the necessary edges. For the converse direction, it is easy to see that the described set is indeed closed under $f$-unions.)
\end{ex}

In the following propositions, the double angle brackets again denote the normal subgroup generated by the given generators.

\begin{prop}
\label{P.skewgen}
Let $S$ be a~set of bilabelled graphs. Denote by~$\Cat$ the skew graph category generated by~$S$ and by~$F$ the corresponding graph fibration. Then $F$ contains as fibres the closure of $N_0$, $N_1$, and the underlying graphs in~$S$ under arbitrary $f$-unions. For every $K\in F$, we then have
$$F(K)=\llangle \iota(g_{\aw^*\bw})\mid (H,\aw,\bw)\in S;\; \iota\colon H\to K\text{ an embedding}\rrangle\trianglelefteq\Z_2^{*V(K)}$$
\end{prop}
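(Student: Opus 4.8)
The plan is to introduce a candidate fibration $F'$ given by the right-hand sides of the statement and to prove $F=F'$ by two inclusions, using the bijective correspondence of Theorem~\ref{T.comb} together with Lemma~\ref{L.closei}. Concretely, I would let $B$ be the closure of $\{N_0,N_1\}$ and the underlying graphs of~$S$ under arbitrary $f$-unions, and set $F'(K):=\llangle\iota(g_{\aw^*\bw})\mid(H,\aw,\bw)\in S,\ \iota\colon H\to K\text{ an embedding}\rrangle$ for $K\in B$ and $F'(K):=\emptyset$ otherwise. The first task is to verify that $F'$ is a graph fibration. Axiom~(F1) is immediate, since each $F'(K)$ is a normal closure and hence a normal subgroup. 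For~(F2), any automorphism~$\phi$ of~$K$ sends an embedding $\iota\colon H\to K$ to the embedding $\phi\circ\iota$, so $\phi$ permutes the generating set of~$F'(K)$ and therefore fixes $F'(K)$. For~(F3), the fibres~$B$ are closed under $f$-unions by construction, and for a vertex overlap~$f$ the inclusion $f_K\colon K\to K\cup_f H'$ is an injective homomorphism; precomposing an embedding $\iota\colon H\to K$ with it again yields an embedding $H\to K\cup_f H'$, so $f_K$ carries each generator of~$F'(K)$ to a generator of~$F'(K\cup_f H')$, giving $f_K(F'(K))\subset F'(K\cup_f H')$ and likewise for~$H'$.

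For the inclusion $F\subset F'$, I take $\iota=\id_H$ in the definition of $F'(H)$ and use $g_{\aw^*\bw}=g_\aw^{-1}g_\bw$ to conclude that $\Phi(S)\subset F'$, where $\Phi\colon(K,\aw,\bw)\mapsto(K,g_\aw^{-1}g_\bw)$ is the map of Theorem~\ref{T.comb}; this is equivalent to $S\subset\Phi^{-1}(F')$. Since $F'$ is a graph fibration, $\Phi^{-1}(F')$ is a skew graph category by Theorem~\ref{T.comb}, and because $\Cat=\langle S\rangle^{\text{skew}}$ is the smallest one containing~$S$, we get $\Cat\subset\Phi^{-1}(F')$. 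Applying~$\Phi$ and using $\Phi(\Phi^{-1}(F'))=F'$ then yields $F=\Phi(\Cat)\subset F'$.

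For the reverse inclusion $F'\subset F$, I first observe that $F$ already contains all of~$B$ as fibres: it contains $N_0,N_1$ by definition, it contains each underlying graph of~$S$ (as $S\subset\Cat$ and $F=\Phi(\Cat)$), and it is closed under $f$-unions by~(F3). Then, for a fibre $K\in B$ and a generator $\iota(g_{\aw^*\bw})$ coming from $(H,\aw,\bw)\in S$ and an embedding $\iota\colon H\to K$, we have $g_{\aw^*\bw}\in F(H)$ with both $H,K\in F$, so Lemma~\ref{L.closei} gives $\iota(g_{\aw^*\bw})\in F(K)$. As $F(K)$ is a normal subgroup, it contains the full normal closure~$F'(K)$, whence $F'\subset F$. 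Combining the two inclusions gives $F=F'$, which is the assertion; in particular the fibres of~$F$ coincide with~$B$.

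The step I expect to demand the most care is the interplay that pins down $F(K)$ \emph{exactly}. Lemma~\ref{L.closei} only shows that the listed words lie in~$F(K)$, i.e.\ one inclusion; it is the minimality of $\langle S\rangle^{\text{skew}}$, transported through the correspondence of Theorem~\ref{T.comb}, that forbids $F(K)$ from being any larger. Consequently the linchpin is the routine but essential verification that $F'$ satisfies all the fibration axioms, since this is precisely what makes $\Phi^{-1}(F')$ a legitimate skew graph category and thereby activates the minimality argument.
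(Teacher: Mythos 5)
Your proof is correct and follows essentially the same route as the paper's: one inclusion via Lemma~\ref{L.closei}, the other by verifying that the right-hand side defines a graph fibration (checking (F1)--(F3)) and invoking minimality of $\langle S\rangle^{\text{skew}}$ through the correspondence of Theorem~\ref{T.comb}. The only difference is cosmetic: you spell out explicitly, via $\Phi$ and $\Phi^{-1}$, the minimality argument that the paper compresses into the phrase ``it is enough to show that the right-hand side defines a graph fibration.''
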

\begin{proof}
The fact that $F$ contains as fibres (at least) the $f$-union closure of graphs in~$S$ follows from axiom (F3) of graph fibrations being closed under $f$-unions. Now the $\supset$ inclusion in the equality follows from Lemma~\ref{L.closei}.

To prove the inclusion~$\subset$, it is enough to show that the right-hand side defines a~graph fibration. By definition, $F(K)$ is a normal subgroup so (F1) holds. If we take an embedding $\iota\colon H\to K$, then $\phi\circ\iota$ is surely also an embedding for any $\phi\in\Aut K$, so (F2) holds. Finally, taking two fibres $K,K'\in F$ and their vertex overlap $f$, then $K\cup_fK'$ is again a fibre by definition. Any element of $F(K)$ is of the form $a=\iota(g_{\aw^*\bw})$. Since $f_K\circ\iota$ is surely an injective homomorphism, we have that $f_K(a)=(f_K\circ\iota)(g_{\aw^*\bw})\in F(K\cup_fK')$. Similarly for $f_{K'}$.
\end{proof}

The situation in the easy case, i.e.\ with group-theoretical graph categories, gets a~bit simpler if we are focusing only on graphs with a loop at every vertex (see Remark~\ref{R.loops}). Recall from Proposition~\ref{P.easyK} that in this case a~group-theoretical graph category either coincides with some group-theoretical category of partitions or it corresponds to a~full graph fibration.

\begin{prop}
\label{P.grpthgen}
Let $S$ be a~set of bilabelled graphs containing at least one graph with at least one edge. Denote by~$\Cat$ the group-theoretical graph category generated by~$S$ and by~$F$ the corresponding full easy graph fibration. Then
$$F(K)=\llangle \phi(g_{\aw^*\bw})\mid (H,\aw,\bw)\in S;\; \phi\colon H\to K\text{ a~homomorphism}\rrangle\trianglelefteq\Z_2^{*V(K)}$$
for every graph~$K$ with loops at every vertex.
\end{prop}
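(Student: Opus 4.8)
The plan is to follow the same two-inclusion scheme as in Proposition~\ref{P.skewgen}, but to replace embeddings by arbitrary homomorphisms, exploiting that in the easy case the fibration is closed under \emph{all} homomorphisms rather than only injective ones. Throughout I work under the loops-everywhere convention of Remark~\ref{R.loops}; since $S$ contains a graph with at least one edge, Proposition~\ref{P.easyK} guarantees that $F$ is full, so Proposition~\ref{P.alteasy} applies and tells us that $F$ is closed under all graph homomorphisms. Write $F_0(K)$ for the normal closure appearing on the right-hand side of the asserted formula.

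For the inclusion $F_0(K)\subseteq F(K)$ I would argue directly on generators. Each generating triple $(H,\aw,\bw)\in S\subseteq\Cat$ gives, via the correspondence~\eqref{eq.FCat}, that $g_{\aw^*\bw}=g_\aw^{-1}g_\bw\in F(H)$. For any homomorphism $\phi\colon H\to K$, Proposition~\ref{P.alteasy} yields $\phi(g_{\aw^*\bw})\in\phi(F(H))\subseteq F(K)$. Since $F(K)$ is a normal subgroup, it contains the normal closure of all such elements, which is exactly $F_0(K)$.

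For the reverse inclusion I would show that $F_0$ is itself an easy full graph fibration and then invoke minimality of $\Cat$. By Proposition~\ref{P.alteasy} it suffices to check that $K\mapsto F_0(K)$ is a collection of normal subgroups closed under all homomorphisms: normality holds by construction, and closure is the crux. Given a homomorphism $\psi\colon K\to K'$ and a generator $\phi(g_{\aw^*\bw})$ of $F_0(K)$ with $\phi\colon H\to K$, the composite $\psi\circ\phi\colon H\to K'$ is again a homomorphism, so $\psi(\phi(g_{\aw^*\bw}))=(\psi\circ\phi)(g_{\aw^*\bw})$ is a generator of $F_0(K')$; as the induced group homomorphism carries the normal closure of a set into any normal subgroup containing the images, $\psi(F_0(K))\subseteq F_0(K')$. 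Hence $F_0$ is an easy full graph fibration. Taking $\phi=\id_H$ shows $g_{\aw^*\bw}\in F_0(H)$ for every $(H,\aw,\bw)\in S$, so the group-theoretical graph category associated to $F_0$ via~\eqref{eq.CatF} contains $S$ and therefore contains the generated category $\Cat$; by the one-to-one correspondence of Theorem~\ref{T.comb} (which is monotone in both directions) this yields $F(K)\subseteq F_0(K)$.

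The main obstacle is precisely the closure-under-homomorphisms step for $F_0$: one must observe that graph homomorphisms compose and that the induced map on $\Z_2^{*V(K)}$ sends the normal closure of the generating words into the normal closure of their images. This is exactly where easiness (axiom (F4), packaged as Proposition~\ref{P.alteasy}) enters, in contrast with Proposition~\ref{P.skewgen}, where only injective homomorphisms and axiom (F3) were available; the remaining verifications are the routine bookkeeping already carried out in the skew case.
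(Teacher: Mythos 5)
Your proposal is correct and takes essentially the same approach as the paper, whose entire proof is the citation ``Follows directly from Propositions~\ref{P.alteasy}, \ref{P.easyK}'': you use exactly those two ingredients (fullness via Proposition~\ref{P.easyK}, closure under all homomorphisms via Proposition~\ref{P.alteasy}) together with the two-inclusion scheme of Proposition~\ref{P.skewgen}, replacing embeddings by arbitrary homomorphisms. The only difference is that you spell out the minimality and monotonicity bookkeeping (via Theorem~\ref{T.comb}) that the paper leaves implicit.
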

\begin{proof}
Follows directly from Propositions~\ref{P.alteasy}, \ref{P.easyK}.
\end{proof}

Now, we look on the question of generators of group-theoretical graph categories. First we need a~small lemma.

\begin{lem}
\label{L.grpthpart}
Let $\Cat$ be a~graph category containing~$\Ggrpth$. Then $\Cat$ is closed under the following operations.
\begin{enumerate}
\item Moving a~pair of neighbouring output strings coming from a~common vertex:
$$(K,\emptyset,\aw xx\bw\cw)\leftrightarrow (K,\emptyset,\aw\bw xx\cw).$$
\item Computing quotients $\KG\mapsto\KG/\pi$, where all vertices of~$\KG$ that are not input/output are singletons in~$\pi$ (i.e.\ they are not identified with any other vertex).
\end{enumerate}
\end{lem}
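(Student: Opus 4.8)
The plan is to reduce both operations to composing~$\KG$ (on the appropriate side) with a suitable \emph{edgeless} ``connector'' bilabelled graph that already lies in~$\Cat$. The crucial preliminary observation is that the edgeless bilabelled graphs contained in~$\Cat$ behave like a~group-theoretical category of partitions: $\Cat\cap\Part$ contains $\Ggrpth$, $\Gid$,~$\Gpair$ and is closed under the partition-category operations, since these are just the graph-category operations restricted to edgeless graphs (up to the empty-block convention of Remark~\ref{R.vertexrem}). Hence, by Theorem~\ref{T.partcor}, it corresponds to an $sS_V$-invariant normal subgroup $A\trianglelefteq\Z_2^{*V}$, and because $e\in A$, every \emph{balanced} partition, i.e.\ every $\ker(\aw,\bw)$ with $g_{\aw^*\bw}=e$, lies in~$\Cat$. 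All the connectors below will be edgeless and balanced, which is what places them in~$\Cat$; and being edgeless, composing one with a~graph~$K$ never creates or destroys any edge of~$K$, it only contracts the boundary vertices through which it is glued.

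For operation~(1) I would use the connector $T:=\ker(\aw xx\bw\cw,\;\aw\bw xx\cw)$, in which the identity strands match $\aw$,~$\bw$,~$\cw$ to themselves and the two letters~$x$ on the top and on the bottom form a~single block. Since $g_x^2=e$ in~$\Z_2^{*V(K)}$, both the top word $\aw xx\bw\cw$ and the bottom word $\aw\bw xx\cw$ represent the same element $g_\aw g_\bw g_\cw$, so $T$ is balanced and hence $T\in\Cat$. The composition then gives $T\cdot(K,\emptyset,\aw xx\bw\cw)=(K,\emptyset,\aw\bw xx\cw)$: the edgeless~$T$ leaves~$K$ intact, merely reroutes the output legs past~$\bw$, and keeps the moved pair attached to the original vertex~$x$ (because the two top and two bottom occurrences of~$x$ in~$T$ share one block). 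The opposite implication is obtained from the mirror connector $\ker(\aw\bw xx\cw,\;\aw xx\bw\cw)$, which is balanced for the same reason, yielding the asserted equivalence.

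For operation~(2) I would realize $\KG\mapsto\KG/\pi$ by composing with the connector $C_\pi$ built from the identity partition on the output legs by \emph{joining}, for each block of~$\pi$ that meets the output tuple, the corresponding output positions into a~single vertex; concretely, each such joining is effected by composing with a~copy of $\Ggrpth=\MG^{2,2}$ placed at the relevant positions and tensored with identities (cf.\ the joining-blocks mechanism of \cite[Lemma~2.3]{RW14}). Here the top and bottom words of~$C_\pi$ literally coincide, so $g_{\aw^*\bw}=e$ trivially and $C_\pi\in\Cat$ without any need for reordering or empty blocks. Since $C_\pi$ is edgeless, $C_\pi\cdot\KG$ identifies precisely the prescribed output vertices of~$K$ (merging their incidences, ignoring edge multiplicities) while leaving every non-boundary vertex a~singleton; blocks of~$\pi$ meeting the input tuple are treated identically after rotating the relevant legs to the output side via $\Lrot$,~$\Rrot$, and a~general~$\pi$ is reached by iterating. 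This is exactly why the hypothesis that non-input/output vertices stay singletons is needed: the only vertices a~boundary connector can contract are those appearing in~$\aw$ or~$\bw$.

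The main obstacle is the verification underlying the first paragraph, namely that the balanced connector~$T$ of operation~(1)---which genuinely \emph{reorders} a~doubled strand past~$\bw$, unlike the order-preserving~$C_\pi$---really lies in~$\Cat$. A~plain crossing is not available (its associated group element is nontrivial), and $T$ is admissible only because doubling makes it balanced; establishing that all balanced partitions belong to~$\Cat$ is precisely where the group-theoretical machinery of Theorem~\ref{T.partcor} and \cite[Lemma~2.3]{RW14} is essential. The remaining care is bookkeeping: checking that each connector is balanced, that its composition with~$\KG$ produces exactly the claimed bilabelled graph (correct underlying graph, leg positions, and vertex attachments), and that the empty-block technicality of Remark~\ref{R.vertexrem} does not introduce spurious isolated vertices into the result.
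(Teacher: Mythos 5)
Your overall strategy is the same as the paper's: realize both operations by composing with edgeless ``connector'' partitions that lie in~$\Cat$ because they come from~$\Ggrpth$, and your checks that the connectors are balanced and that composing with them merely reroutes strings without touching the edges of~$K$ are correct. The genuine gap is in the step you yourself single out as the crux, namely ``every balanced partition lies in~$\Cat$''. You obtain this by applying Theorem~\ref{T.partcor} to $\Cat\cap\Part$, but that theorem is about categories of partitions, which are \emph{by definition} closed under removing empty blocks, whereas a graph category is only guaranteed to be closed under \emph{adding} isolated vertices. This is exactly the discrepancy of Remark~\ref{R.vertexrem}, and it cannot be dismissed as a convention: a graph category containing $\Ggrpth$ is not known to be closed under removing isolated vertices (by Remark~\ref{R.grpthpart} it need not even be group-theoretical), so your transfer argument only yields that for every balanced partition~$p$ some copy of~$p$ with finitely many isolated vertices adjoined lies in~$\Cat$. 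That weaker statement does not prove the lemma: composing $(K,\emptyset,\aw xx\bw\cw)$ with a connector carrying spurious isolated vertices produces $(K\sqcup N_m,\emptyset,\aw\bw xx\cw)$ rather than $(K,\emptyset,\aw\bw xx\cw)$, so the asserted exact closure does not follow. The claim is in fact true, but proving it exactly amounts to the explicit generation that the paper performs instead: for (1) it composes with $\Gid^{\otimes r}\otimes\Ggrpth\otimes\Gid^{\otimes s}$, moving the pair one position at a time (your long connector~$T$ is a composition of these one-step movers, and no isolated vertices arise because every block of every factor meets both rows); for (2) it composes with the partition consisting of identity strands plus a single four-block joining the two relevant positions, invoking its explicit generation from~$\Ggrpth$ in \cite[page~6]{RW14}.

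Two further inaccuracies, one of which matters. First, $\Ggrpth$ is not $\MG^{2,2}$: it is the partition $\ker(\la\lb\lb,\lb\lb\la)\in\Part(3,3)$, a doubled string crossing a single string, and it is precisely this crossing feature that makes the one-step moves of~(1) possible. Second, and consequently, your concrete recipe for~(2) --- ``$\MG^{2,2}$ placed at the relevant positions and tensored with identities'' --- only joins \emph{adjacent} output positions; for a block of~$\pi$ whose legs are not adjacent you must either use the distant four-block connector (whose membership in~$\Cat$ again requires the RW14 generation, not just balancedness), or first triple a leg via $\MG^{1,3}=\Ggrpth\cdot(\Gpair\otimes\Gid)$, transport the resulting doubled string with operation~(1), join with the adjacent four-block, and cap the extra pair with~$\Guppair$. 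Since single strands cannot be permuted in~$\Cat$ (the crossing is not balanced), there is no way around one of these two devices.
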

\begin{proof}
Moving the pair of strings one position left resp.\ right is achieved by composing with
$\Graph{
\GV 0.5:-2,0,2,4,6;
\GV 0.8:1.4;
\GS 0.5/1:2/2,2/3;
\GS 0.5/0:2/1,2/2;
\GS 0.8/1:1.4/1;
\GS 0.2/0:2.6/3;
\GSa (1.4,0.8) (-0.6,0.3) (0.25,0.25)
\GS 0/1:-2/,0/,4/,6/;
\Ptext (-0.9,0.5) {\dots}
\Ptext (5.1,0.5) {\dots}
}$
resp.
$\Graph{
\GV 0.5:-2,0,2,4,6;
\GV 0.8:2.6;
\GS 0.5/1:2/2,2/1;
\GS 0.5/0:2/3,2/2;
\GS 0.8/1:2.6/3;
\GS 0.2/0:1.4/1;
\GSa (2.6,0.8) (0.6,0.3) (-0.25,0.25)
\GS 0/1:-2/,0/,4/,6/;
\Ptext (-0.9,0.5) {\dots}
\Ptext (5.1,0.5) {\dots}
}$.

Identifying two distinct output vertices is achieved by composing with partition
$\Graph{
\GV 0.5:1,3,5,6,7,9,11;
\GS 0/1:1/,3/,5/,7/,9/,11/;
\GS 0.8/1:4/,8/;
\GS 0.2/0:4/,8/;
\GS 0.2/0.8:4/8,8/4;
\Ptext (2.1,0.5) {\dots}
\Ptext (10.1,0.5) {\dots}
\Ptext (6.1,0.8) {\dots}
\Ptext (6.1,0.2) {\dots}
}$, which is generated by~$\Ggrpth$ (see \cite[page~6]{RW14}).
\end{proof}

\begin{prop}
\label{P.grpthgen2}
Let $S$ be a~set of bilabelled graphs such that every vertex appears at least once within the input/output vertices. Then the graph category $\langle \Ggrpth,S\rangle$ is group-theoretical. Consequently, $\langle\Ggrpth,S\rangle=\langle S\rangle\grpth$.
\end{prop}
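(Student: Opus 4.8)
The plan is to reduce everything to the single substantive claim that $\Cat:=\langle\Ggrpth,S\rangle$ is closed under graph quotients, since the concluding identity $\langle\Ggrpth,S\rangle=\langle S\rangle\grpth$ is then purely formal. Indeed, every group-theoretical graph category is a skew graph category and hence contains $\Ggrpth$ (Remark~\ref{R.skew2}), so $\langle S\rangle\grpth$ contains both $\Ggrpth$ and $S$, giving $\langle\Ggrpth,S\rangle\subseteq\langle S\rangle\grpth$; conversely, once $\langle\Ggrpth,S\rangle$ is known to be group-theoretical and it plainly contains $S$, minimality of $\langle S\rangle\grpth$ yields the reverse inclusion. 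So I would spend the whole proof on quotient-closedness of $\Cat$.

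The central difficulty is that elements of $\Cat$ are built from the generators by tensor products, compositions, involutions and rotations, and composition contracts output vertices of one graph with input vertices of another, thereby creating \emph{internal} (non-input/output) vertices. Lemma~\ref{L.grpthpart}(2) only permits identifying input/output vertices, so the real obstacle is merging internal vertices. I would overcome this by first proving an \emph{exposure lemma}: for every $\KG=(K,\aw,\bw)\in\Cat$ the fully exposed graph $\hat\KG:=(K,\emptyset,\eta)$, where the tuple $\eta$ lists every vertex of $K$ at least once as an output, again lies in $\Cat$.

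I would prove the exposure lemma by induction on the construction of $\KG$ from the generators. For a generator every vertex is an input/output vertex (by the hypothesis on $S$, and directly for $\Ggrpth$, $\Gid$, $\Gpair$, $\nullG$), so rotating all strings to the bottom row produces $\hat\KG$; involution and rotation leave the all-outputs graph unchanged, and for a tensor product $\widehat{\KG\otimes\HG}=\hat\KG\otimes\hat\HG$. The key case is composition: $\widehat{\HG\cdot\KG}$ is obtained from $\hat\KG\otimes\hat\HG$ by identifying, for each $i$, the now-output vertex $b_i$ of $K$ with the now-output vertex $c_i$ of $H$, and each such identification of two distinct output vertices is legitimate by Lemma~\ref{L.grpthpart}(2) (using~(1) to bring them into position). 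The resulting graph exposes exactly the vertices of $H\cdot K$, including the contracted middle vertices. This is the crucial mechanism: the contraction hidden inside a composition becomes a sequence of permitted output-identifications once everything has been rotated to the bottom row.

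With the exposure lemma in hand, closure under quotients is short. Given $\KG\in\Cat$ and a partition $\pi$ of $V(K)$, the graph $\hat\KG\in\Cat$ presents every vertex as an output, so all identifications prescribed by $\pi$ are identifications of output vertices and, by Lemma~\ref{L.grpthpart}(1)--(2), yield $(K/\pi,\emptyset,q_\pi(\eta))\in\Cat$, which has the same underlying graph as $\KG/\pi$. To recover $\KG/\pi=(K/\pi,q_\pi(\aw),q_\pi(\bw))$ I use that $\Ggrpth\in\Cat$ makes $\Cat$ closed under joining blocks (Remark~\ref{R.grpthpart}) and hence supplies the single-vertex spider $\MG^{1,2}$ and the cap $\Guppair$: rotating suitable outputs back to inputs, duplicating strings with $\MG^{1,2}$ to reproduce the multiplicities in $\aw$ and $\bw$, and hiding each vertex absent from $\aw,\bw$ by duplicating its output string and capping it with $\Guppair$, all keep us inside $\Cat$ and transform $(K/\pi,\emptyset,q_\pi(\eta))$ into $\KG/\pi$. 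I expect the main obstacles to be precisely the composition step of the exposure lemma --- verifying that the internal vertices created by contraction are faithfully recovered as output-identifications --- and the bookkeeping in this final recovery step, where one must ensure the rotating, duplicating and capping reproduce the exact input/output data of $\KG/\pi$ rather than merely its underlying graph.
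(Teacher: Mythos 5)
Your overall strategy --- reduce everything to quotient-closedness, then make every vertex an input/output vertex so that Lemma~\ref{L.grpthpart}(2) can perform arbitrary identifications, proving this by structural induction with composition as the key case --- is exactly the mechanism of the paper's proof, and your exposure lemma itself goes through. The genuine gap is in your final recovery step, and it is not mere bookkeeping: the moves you invoke there are not available in $\Cat=\langle\Ggrpth,S\rangle$. The spider $\MG^{1,2}=(N_1,(x),(x,x))$ is \emph{not} supplied by $\Ggrpth$ or by closure under joining blocks: by Theorem~\ref{T.comb}, membership of $(K,\aw,\bw)$ in a group-theoretical graph category is decided by whether $g_\aw^{-1}g_\bw$ lies in the fibre $F(K)$, and $\MG^{1,2}\in\Cat$ would force $g_x\in F(N_1)$. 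This fails already for $\langle\Ggrpth\rangle$ (whose fibres are trivial) and for $\langle\Ggrpth,\Gabab\rangle$ of Example~\ref{E.abab2}, whose fibres contain only words of even length; joining blocks can never produce a three-legged block from the even-legged partitions present. For the same reason you cannot ``hide'' a vertex by duplicating one string and capping --- that deletes a single letter and changes $g_\aw^{-1}g_\bw$ --- nor reorder single strings of distinct vertices: $(N_2,\emptyset,(v,v,w,w))\in\langle\Ggrpth\rangle$ but $(N_2,\emptyset,(v,w,v,w))\notin\langle\Ggrpth\rangle$, since $g_{vwvw}\neq e$. Indeed, if your moves were legitimate, every group-theoretical graph category would contain $(K,\cw,\dw)$ for \emph{all} tuples $\cw,\dw$ over every graph $K$ in its fibration, i.e.\ every fibre would equal $\Z_2^{*V(K)}$, which is false. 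The root cause is that your exposed graph $\hat\KG=(K,\emptyset,\eta)$ forgets the tuples $(\aw,\bw)$, and these carry exactly the information (the group element) that cannot be recreated afterwards by parity-preserving operations.

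The paper's proof repairs precisely this defect by never discarding the data: it works with $\tilde\KG:=(K,\aw,\bw\vw)$, $\vw=\lv_1\lv_1\cdots\lv_m\lv_m$, where the exposure strings are appended to the untouched pair $(\aw,\bw)$ as adjacent \emph{pairs} from each vertex. Pairs are parity-neutral: they can be created by composing with the fork $\Gforkk=\Ggrpth\cdot(\Gpair\otimes\Gid)$ (three legs at the vertex, not two --- unlike your $\MG^{1,2}$), moved around by Lemma~\ref{L.grpthpart}(1), and removed by caps $\Guppair$, all without changing $g_\aw^{-1}g_\bw$. One then runs your same induction for $\tilde\KG$ instead of $\hat\KG$, applies Lemma~\ref{L.grpthpart}(2) to $\tilde\KG$, and simply caps off the pairs $q_\pi(\vw)$ to land exactly on $\KG/\pi=(K/\pi,q_\pi(\aw),q_\pi(\bw))$; no rebuilding is ever needed. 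Your argument becomes correct if you replace $\hat\KG$ by $\tilde\KG$ throughout and delete the recovery step.
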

\begin{proof}
Given any bilabelled graph $\KG=(K,\aw,\bw)$, we associate to it a~bilabelled graph $\tilde\KG:=(K,\aw,\bw\vw)$, where $\vw=\lv_1\lv_1\lv_2\lv_2\cdots\lv_m\lv_m$ and $\{\lv_1,\dots,\lv_m\}$ denotes the set of all vertices in~$K$. It holds that for any $\KG\in S$, we have $\tilde\KG\in\langle\Ggrpth,S\rangle$. Indeed, recall that for every $v\in V(K)$, we assume that it is an input/output vertex. We can create two extra input/output strings coming from the vertex by composing with~$\Gforkk=\Ggrpth\cdot(\Gpair\otimes\Gid)$. Then, we can move those to the end of the output tuple by Lemma~\ref{L.grpthpart}(1).

Now, we prove that actually for any $\KG\in\langle\Ggrpth,S\rangle$, we have $\tilde\KG\in\langle\Ggrpth,S\rangle$. It is enough to show that for every $\KG,\HG\in\langle\Ggrpth,S\rangle$, such that $\tilde\KG,\tilde\HG\in\langle\Ggrpth,S\rangle$, we also have $\widetilde{\KG\otimes\HG},\widetilde{\HG\cdot\KG},\widetilde{\KG^*}\in\langle\Ggrpth,S\rangle$. This is straightforward to check using Lemma~\ref{L.grpthpart}(1).

Finally, it follows from Lemma~\ref{L.grpthpart}(2) that $\langle\Ggrpth,S\rangle$ is closed under arbitrary graph quotients. Indeed, taking any $\KG\in\langle\Ggrpth,S\rangle$, we can construct~$\tilde\KG$, then use Lemma~\ref{L.grpthpart}(2) to construct the corresponding quotient and finally delete the redundant output strings by composing with~$\Guppair$.
\end{proof}

We can also reformulate the above proposition as an equivalence.

\begin{prop}
Let $\Cat$ be a graph category. It is group-theoretical if and only if there exists a~set of bilabelled graphs $S$ whose vertices all appear in the input/output tuples such that $\Cat=\langle\Ggrpth,S\rangle$.
\end{prop}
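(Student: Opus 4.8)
The plan is to obtain this equivalence directly from Proposition~\ref{P.grpthgen2}, which already carries the nontrivial content. The \emph{if} direction is then immediate: if $S$ is a set of bilabelled graphs all of whose vertices appear among the input/output tuples and $\Cat=\langle\Ggrpth,S\rangle$, then Proposition~\ref{P.grpthgen2} asserts that $\langle\Ggrpth,S\rangle$ is group-theoretical, so $\Cat$ is group-theoretical.

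For the \emph{only if} direction, suppose $\Cat$ is group-theoretical. Then $\Ggrpth\in\Cat$ by Remark~\ref{R.skew2}, and writing $\Cat=\langle\Ggrpth,\Cat\rangle$ I can reuse the construction from the proof of Proposition~\ref{P.grpthgen2}: for each $\KG=(K,\aw,\bw)\in\Cat$ form $\tilde\KG=(K,\aw,\bw\vw)$, where $\vw=\lv_1\lv_1\cdots\lv_m\lv_m$ lists every vertex of $K$ twice. That proof shows $\tilde\KG\in\Cat$, and by construction every vertex of $\tilde\KG$ occurs in its output tuple. I therefore set
$$S:=\{\tilde\KG\mid\KG\in\Cat\},$$
a set of bilabelled graphs whose vertices all appear in the input/output tuples, with $S\subset\Cat$.

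It then remains to verify $\Cat=\langle\Ggrpth,S\rangle$. The inclusion $\langle\Ggrpth,S\rangle\subset\Cat$ holds since $S\subset\Cat$ and $\Ggrpth\in\Cat$. For the reverse inclusion, I take any $\KG\in\Cat$; then $\tilde\KG\in S\subset\langle\Ggrpth,S\rangle$, and $\KG$ is recovered from $\tilde\KG$ by deleting the $m$ redundant output pairs, that is, by composing $\tilde\KG$ with $\Gid^{\otimes|\bw|}\otimes\Guppair^{\otimes m}$, each cap $\Guppair$ contracting one pair $\lv_i\lv_i$ emanating from a single vertex. Since $\Gid$ and $\Guppair$ lie in every graph category, this composition stays inside $\langle\Ggrpth,S\rangle$, yielding $\KG\in\langle\Ggrpth,S\rangle$. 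Hence $\Cat=\langle\Ggrpth,S\rangle$.

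I expect no serious obstacle. The only two points requiring care are that the passage $\KG\mapsto\tilde\KG$ keeps us inside $\Cat$, which is exactly what the proof of Proposition~\ref{P.grpthgen2} establishes, and that capping with $\Guppair$ removes an output pair coming from a single vertex without adding any edge or altering the rest of the graph, so that the composition returns precisely $\KG$.
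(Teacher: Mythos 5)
Your overall architecture is sound and in fact mirrors the paper's own proof: make every vertex appear in the input/output tuples by adding a pair of strings at each vertex, invoke Proposition~\ref{P.grpthgen2}, and recover the original graphs by capping with $\Guppair$ (your capping computation at the end is correct). However, the crucial step --- that $\tilde\KG\in\Cat$ for \emph{every} $\KG\in\Cat$ --- is not established by the argument you cite, and as written this is a genuine gap. The proof of Proposition~\ref{P.grpthgen2} creates the two extra strings at a vertex $v$ by composing a fork $\Gforkk$ onto an \emph{already existing} string at $v$; this is precisely where the standing hypothesis that every vertex of the generators is an input/output vertex is used. An arbitrary $\KG\in\Cat$ may have vertices carrying no string at all, and composition interacts with a bilabelled graph only through its input/output interface, so the fork trick can never reach such a hidden vertex; nor can the induction in that proof be started from $S=\Cat$, since its base case is exactly the claim you want. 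A concrete sanity check that the cited mechanism cannot suffice: it uses nothing beyond $\Ggrpth\in\Cat$, so if it worked, your argument would show that \emph{every} graph category containing $\Ggrpth$ is group-theoretical --- which the paper explicitly states is false (Remark~\ref{R.grpthpart}).

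The step itself is true, and the repair is one line, using exactly the quotient-closure that your proof never otherwise invokes: $\tilde\KG=(K,\aw,\bw\vw)$ is the quotient of $\KG\otimes\Gpair^{\otimes m}\in\Cat$ by the partition whose blocks are the pairs formed by the $i$-th new isolated $\Gpair$-vertex and $\lv_i$, all remaining vertices being singletons; equivalently, $\tilde\KG$ is an $f$-union of $\KG$ with $m$ copies of $\Gpair$, and every group-theoretical category is closed under $f$-unions (Remark~\ref{R.skew2}). Note that Lemma~\ref{L.grpthpart}(2) does \emph{not} cover this quotient, since the unlabelled vertices of $K$ are not singletons in it, so full quotient-closure is genuinely needed here. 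With this replacement your proof is correct and essentially identical to the paper's, which modifies a generating set by $f$-unions with $\Gpair$ and then applies Proposition~\ref{P.grpthgen2}.
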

\begin{proof}
The right-left implication follows directly from Proposition~\ref{P.grpthgen2}. To prove the left-right implication, take any generating set $\tilde S$ of $\Cat$. Computing $f$-unions of graphs in $\tilde S$ with $\Gpair$, we are able to make all vertices of the graphs appear in the input/output tuples while not changing the category they generate. Finally, we already mentioned in Remark~\ref{R.skew2} that every group-theoretical graph-category must contain $\Ggrpth$ as an $f$-union of $\Guppair$, $\Gid$ and~$\Gpair$.
\end{proof}

\begin{ex}
\label{E.abab2}
As an example, consider the category $\langle\Ggrpth,\Gabab\rangle$. From Proposition~\ref{P.grpthgen2}, it follows that it is a~group-theoretical graph category -- the smallest one containing the bilabelled graph~$\Gabab$. Now using Proposition~\ref{P.grpthgen}, we find out that this category is described by the graph fibration from Example~\ref{E.abab}.
\end{ex}

\section{Preliminaries: Quantum groups and Tannaka--Krein duality}
\label{sec.prelimQG}

In this section, we introduce very briefly the theory of compact matrix quantum groups, Tannaka--Krein duality, and the connection with diagram categories. For more detailed introduction, see for example \cite{Web17,Fre19survey}.

\subsection{Compact matrix quantum groups}
\label{secc.qgdef}

An \emph{orthogonal compact matrix quantum group} is a pair $G=(A,u)$, where $A$ is a $*$-algebra and $u\in M_n(A)$ a~matrix such that
\begin{enumerate}
\item the elements $u_{ij}$ $i,j=1,\dots, n$ generate~$A$,
\item the matrix $u$ is orthogonal, i.e.\ $u_{ij}=u_{ij}^*$ and $uu^t=1_n=u^tu$,
\item the map $\Delta\colon A\to A\otimes_{\rm min} A$ defined as $\Delta(u_{ij}):=\sum_{k=1}^n u_{ik}\otimes u_{kj}$ extends to a~$*$-homomorphism.
\end{enumerate}

This formulation comes from \cite{Fre19survey}. We could consider more general \emph{compact matrix quantum groups} by weakening axiom (2) (instead of orthogonality assuming only that $u$ and $u^t$ are unitarizable). This concept was first introduced by Woronowicz in \cite{Wor87}. Compact matrix quantum groups generalize the notion of compact matrix groups in the following sense.

\begin{ex}[Compact matrix group $H$]
Let $H$ be an (orthogonal) compact matrix group. Consider the coordinate functions $v_{ij}\colon H\to\C$ defined by $v_{ij}(h)=h_{ij}$ for $h\in H$. Let $A:=O(H)$ be the coordinate algebra, i.e.\ the $*$-algebra generated by $v_{ij}$. Then $(A,v)$ forms an (orthogonal) compact matrix quantum group. Conversely, any compact matrix quantum group $H=(A,v)$, where $A$ is a~commutative $*$-algebra, is of this form.
\end{ex}

For this reason, given any compact matrix quantum group $\GGr=(A,u)$, we denote $O(\GGr):=A$ (interpreting the elements as non-commutative functions). The matrix~$u$ is called the \emph{fundamental representation} of~$\GGr$. However, there is also a~\emph{dual} viewpoint coming from the following example.

\begin{ex}[Finitely generated group $\Gamma$]
\label{E.Gamma}
Let $\Gamma$ be a~finitely generated discrete group. Denote by $g_1,\dots,g_n$ its generators. Let $\C\Gamma$ be the associated group $*$\hbox{-}algebra, denote by $\gamma_1,\dots,\gamma_n\in\C\Gamma$ its elements corresponding to the generators of~$\Gamma$ and by~$\gamma$ the diagonal matrix $\gamma=\diag(\gamma_1,\dots,\gamma_n)$. Then $\hat\Gamma=(\C\Gamma,\gamma)$ is also a~compact matrix quantum group. If $g_i^2=e$, i.e.\ $\gamma_i^2=1$, then it is an orthogonal CMQG. It is called the \emph{compact dual} of~$\Gamma$.
\end{ex}


We say that a~compact matrix quantum group $\HGr=(O(\HGr),v)$ is a~\emph{quantum subgroup} of $\GGr=(O(\GGr),u)$, denoted $\HGr\subset\GGr$, if there is a~surjective $*$-homomorphism $O(\GGr)\to O(\HGr)$ mapping $u_{ij}\mapsto v_{ij}$. That is, quantum subgroup can be constructed by \emph{adding relations to the algebra}.

In this article, we focus on compact matrix quantum groups of the following form.

\begin{ex}[Semidirect product $\hat\Gamma\rtimes H$]
Let $H\subset S_n$ be a~permutation group represented by permutation matrices. That is, $H$~can be considered as a~compact matrix quantum group $H=(O(H),v)$, where $v_{ij}\colon H\to\C$ is the function defined by $v_{ij}(\sigma)=\delta_{i\sigma(j)}$. Note in particular that $v_{ij}^2=v_{ij}$ and $\sum_k v_{ik}=1=\sum_k v_{kj}$ for every $i,j$. Let $\Gamma$ be a~finitely generated group with generators $g_1,\dots,g_n$ and let us define the matrix $\gamma=\diag(\gamma_1,\dots,\gamma_n)\in M_n(\C\Gamma)$, where $\gamma_i$ correspond to~$g_i$, so $\hat\Gamma=(\C\Gamma,\gamma)$ is a~compact matrix quantum group as discussed above.

It holds that $\GGr:=\hat\Gamma H:=(\C\Gamma\otimes O(H),\gamma v)$ is a~compact matrix quantum group. In particular, if we assume that $g_i^2=e$, i.e. $g_i^{-1}=g_i$, so $\gamma_i^2=1$ and $\gamma_i^*=\gamma_i$, then $\GGr$ is an orthogonal compact matrix quantum group.

To see that, denote $u:=\gamma v$, i.e.\ $u_{ij}=\gamma_iv_{ij}$. The orthogonality is obvious since both $\gamma$ and~$v$ are orthogonal matrices. Consequently, the axiom (2) holds. We have that $\gamma_i=\sum_k u_{ik}$ and $v_{ij}=u_{ij}^2$, so the axiom (1) holds. Finally, axiom (3) can be proven by showing that the comultiplication~$\Delta$ corresponds to a~semidirect product construction with respect to the (co)action of $H\subset S_n$ on~$\Gamma$ by permuting the generators. See \cite[Sect.~2.5]{RW15} for details (also see e.g.~\cite[Sect.~10.2.6]{KS97} for the definition of the double crossed product). In order to emphasize this structure, we will denote this quantum group by $\hat\Gamma\rtimes H:=\GGr$.
\end{ex}

\subsection{Representation categories and Tannaka--Krein duality}

In this paper, by a~representation category, we mean a~linear rigid monoidal $*$-category, where the monoid of objects are the natural numbers~$\N_0$ and morphisms are realized as linear operators. To be more concrete:

Consider $n\in\N$. A~\emph{representation category} is a~collection of vector spaces
$$\RCat(k,l)\subset\Lin((\C^n)^{\otimes k},(\C^{n})^{\otimes l}),\quad k,l\in\N_0$$
such that the following holds:
\begin{enumerate}
\item For $T\in\RCat(k,l)$, $T'\in\RCat(k',l')$, we have $T\otimes T'\in\RCat(k+k',l+l')$.
\item For $T\in\RCat(k,l)$, $S\in\RCat(l,m)$, we have $ST\in\RCat(k,m)$.
\item For $T\in\RCat(k,l)$, we have $T^*\in\RCat(l,k)$
\item For every $k\in\N_0$, we have $1_n^{\otimes k}\in\RCat(k,k)$.
\item We have $\sum_{k=1}^n e_k\otimes e_k\in\RCat(0,2)$
\end{enumerate}

Given a~compact matrix quantum group~$\GGr$ with fundamental representation~$u$ of size $n\times n$, we define
$$\RCat_\GGr(k,l):=\Mor(u^{\otimes k},u^{\otimes l}):=\{T\colon (\C^n)^{\otimes k}\to(\C^n)^{\otimes l}\mid Tu^{\otimes k}=u^{\otimes l}T\}.$$
It is easily checked that $\RCat_\GGr$ is a~representation category in the sense of our definition. The so-called Tannaka--Krein duality says also the converse:

\begin{thm}[Woronowicz--Tannaka--Krein \cite{Wor88}]
For every representation category~$\RCat$ there exists a~unique orthogonal compact matrix quantum group~$\GGr$ with $\RCat_\GGr=\RCat$.
\end{thm}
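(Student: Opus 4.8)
The plan is to realize $\GGr$ as a universal construction driven by the morphism spaces of $\RCat$ and then to verify that the reconstruction introduces no spurious intertwiners. First I would define $O(\GGr)$ to be the universal unital $*$-algebra generated by the $n^2$ entries $u_{ij}$ of an $n\times n$ matrix $u$, subject to the relations $u_{ij}=u_{ij}^*$ and $uu^t=1_n=u^tu$ (which force axiom~(2)), together with one family of relations for every morphism of the category: for each $T\in\RCat(k,l)$ we impose $Tu^{\otimes k}=u^{\otimes l}T$, read as equalities among the generators after expanding both sides in the standard bases of $(\C^n)^{\otimes k}$ and $(\C^n)^{\otimes l}$. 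By construction the $u_{ij}$ generate $O(\GGr)$, so axiom~(1) holds, and the inclusion $\RCat\subset\RCat_\GGr$ is immediate.

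The next step is to produce the comultiplication, hence axiom~(3). I would set $\Delta(u_{ij}):=\sum_{k} u_{ik}\otimes u_{kj}$ on generators and check that $\Delta$ respects every defining relation, so that it extends to a $*$-homomorphism $O(\GGr)\to O(\GGr)\otimes O(\GGr)$. Compatibility with orthogonality is routine, since the matrix $\Delta(u)$ with entries $\sum_k u_{ik}\otimes u_{kj}$ is again orthogonal. Compatibility with the intertwiner relations is where the categorical axioms enter: because $\RCat$ is closed under tensor products and compositions, the relation attached to $T\in\RCat(k,l)$ is preserved by $\Delta$, the point being that $T$ continues to intertwine when each leg of $u^{\otimes k}$ and $u^{\otimes l}$ is replaced by the coproduct matrix. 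This makes $\GGr=(O(\GGr),u)$ an orthogonal compact matrix quantum group.

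The hard part is the reverse inclusion $\RCat_\GGr\subset\RCat$, that is, that the reconstruction creates no new morphisms, so that $\Mor(u^{\otimes k},u^{\otimes l})=\RCat(k,l)$ rather than something strictly larger. Here I would pass to the universal $C^*$-completion and exploit the Haar state $h$ of $\GGr$, whose positivity is guaranteed precisely by the orthogonality in axiom~(2) and by the rigidity coming from the duality morphism in axiom~(5). Concretely, I would follow the route presenting $O(\GGr)$ as a quotient of the algebra of the free orthogonal quantum group (whose representation category is the explicitly known Temperley–Lieb category) and then show that the morphism spaces of the quotient are obtained from $\RCat$ only through the categorical closure operations: tensor product, composition, involution, and the rotations available via axiom~(5). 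Since $\RCat$ is by hypothesis already closed under all of these, the closure cannot exceed $\RCat$. Positivity of $h$ is essential for the averaging argument: any candidate intertwiner is projected, by integration against $h$, onto an element expressible through the generating morphisms, and positivity forbids it from escaping $\RCat$. I expect this completeness statement to be the principal obstacle; everything else is formal.

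Finally, uniqueness is straightforward. If $\GGr'=(O(\GGr'),u')$ is another orthogonal compact matrix quantum group with $\RCat_{\GGr'}=\RCat$, then $u'$ satisfies all the defining relations of $O(\GGr)$, yielding a surjective $*$-homomorphism $O(\GGr)\to O(\GGr')$ sending $u_{ij}\mapsto u'_{ij}$; by symmetry the same argument produces a homomorphism in the opposite direction, and the two are mutually inverse on generators. Hence $\GGr\cong\GGr'$ as compact matrix quantum groups, which establishes uniqueness.
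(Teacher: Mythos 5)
First, a point of orientation: the paper does not prove this statement at all --- it is quoted as a preliminary from Woronowicz's paper [Wor88] --- so your attempt has to be judged against the known proofs of Tannaka--Krein duality rather than against anything in the text. Your existence construction (the universal $*$-algebra on the $u_{ij}$ with the orthogonality relations and one relation $Tu^{\otimes k}=u^{\otimes l}T$ per morphism, plus the verification that $\Delta$ respects these relations) is the standard and correct way to begin, and $\RCat\subset\RCat_\GGr$ is indeed immediate. The first genuine gap is that the inclusion $\RCat_\GGr\subset\RCat$, which you correctly identify as the principal obstacle, is not proved but only named --- and the route you indicate is circular as stated. The averaging map $E(T)=(\id\otimes h)\bigl((u^{\otimes l})^*(T\otimes 1)u^{\otimes k}\bigr)$ does project onto $\Mor(u^{\otimes k},u^{\otimes l})$, but to conclude that its image lies in $\RCat(k,l)$ you must already know that the values of the Haar state $h$ on matrix coefficients of $u^{\otimes m}$ are computed by the categorical data --- equivalently, that the space of fixed vectors of $u^{\otimes m}$ is exactly $\RCat(0,m)$, which is (after a Frobenius rotation) precisely the statement being proved. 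Likewise, positivity of $h$ is not ``guaranteed precisely by orthogonality and rigidity'': constructing the invariant functional out of the categorical data and establishing its positivity and faithfulness from rigidity is the hard core of every actual proof (Woronowicz's original argument, and the modern treatments of Neshveyev--Tuset and Malacarne). Deferring exactly this step means the theorem is not proved.

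The second gap is in uniqueness. The step ``by symmetry the same argument produces a homomorphism in the opposite direction'' is not available: $\GGr'$ is an arbitrary orthogonal compact matrix quantum group with $\RCat_{\GGr'}=\RCat$, and its algebra $O(\GGr')$ carries no universal property, so there is nothing to map out of it. The hypothesis only yields the one surjection $\pi\colon O(\GGr)\to O(\GGr')$, $u_{ij}\mapsto u'_{ij}$, and the entire content of uniqueness is that $\pi$ is injective. The standard repair again runs through the Haar state: since $\pi$ intertwines the comultiplications, $h'\circ\pi$ is an invariant state on $O(\GGr)$ and hence equals $h$ by uniqueness of the Haar state; then $\pi(a)=0$ implies $h(a^*a)=h'\bigl(\pi(a)^*\pi(a)\bigr)=0$, and faithfulness of the Haar state on the polynomial algebra of a compact matrix quantum group gives $a=0$. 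So both halves of the theorem ultimately rest on the existence, positivity, and faithfulness of the Haar functional built from the category --- the one ingredient your sketch postpones --- and until that is supplied, neither the completeness of $\RCat$ nor the uniqueness of $\GGr$ is established.
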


Recall that quantum subgroups $\HGr\subset\GGr$ are defined by the fact that $O(\HGr)$ has more relations than $O(\GGr)$. Consequently, we have that $\HGr\subset\GGr$ if and only if $\RCat_\HGr\supset\RCat_\GGr$.

\subsection{CMQGs associated to diagram categories}

Let us fix a~graph~$G$ and label its vertices by numbers $1,\dots,n:=|V(G)|$. For every bilabelled graph $\KG=(K,\aw,\bw)\in\Gat(k,l)$ we define a~linear map $T^G_\KG\colon(\C^n)^{\otimes k}\to (\C^n)^{\otimes l}$ by the following formula
$$[T^G_{\KG}]_{\jw\iw}:=\#\{\text{homomorphisms }\phi\colon K\to G\mid \phi(\aw)=\iw,\;\phi(\bw)=\jw\}$$
for any pair of multiindices $\iw$, $\jw$ with $i_1,\dots,i_k,j_1,\dots,j_l\in\{1,\dots,n\}\simeq V$.

\begin{thm}[{\cite[Sect.~3.2]{MR19}}]
Let $G$ be a~graph. The assignment $\KG\mapsto T^G_\KG$ is a~monoidal unitary functor. That is,
\begin{enumerate}
\item $T^G_{\KG\otimes\HG}=T^G_\KG\otimes T^G_\HG$,
\item $T^G_{\HG\KG}=T^G_\HG T^G_\KG$,
\item $T^G_{\KG^*}=T^{G*}_\KG$.
\end{enumerate}
\end{thm}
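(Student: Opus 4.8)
The plan is to prove the three functoriality identities directly from the combinatorial definition of $T^G_\KG$ via homomorphism counts, exploiting the fact that graph homomorphisms behave well under disjoint union, vertex contraction, and reversal of labels. The key principle throughout is that the matrix entry $[T^G_\KG]_{\jw\iw}$ counts homomorphisms $\phi\colon K\to G$ with prescribed values on the input and output tuples, so each categorical operation should correspond to a natural bijection or counting identity on the relevant sets of homomorphisms.

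For the tensor product (1), recall $\KG\otimes\HG=(K\sqcup H,\aw\cw,\bw\dw)$. A homomorphism $K\sqcup H\to G$ is precisely a pair of homomorphisms $K\to G$ and $H\to G$ (since there are no edges between the two components), and prescribing the values on the concatenated tuples $\aw\cw$ and $\bw\dw$ is the same as prescribing them separately on $K$ and on $H$. Hence the count factors as a product, which is exactly the statement that $[T^G_{\KG\otimes\HG}]_{\jw\jw',\iw\iw'}=[T^G_\KG]_{\jw\iw}[T^G_\HG]_{\jw'\iw'}$, i.e.\ $T^G_{\KG\otimes\HG}=T^G_\KG\otimes T^G_\HG$. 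For the involution (3), since $\KG^*=(K,\bw,\aw)$ simply swaps the roles of input and output tuples, the homomorphism count satisfies $[T^G_{\KG^*}]_{\iw\jw}=[T^G_\KG]_{\jw\iw}$; as all entries are real (in fact non-negative integers), this says precisely $T^G_{\KG^*}=(T^G_\KG)^t=(T^G_\KG)^*$.

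The main work is the composition identity (2). Here $\HG\KG=(H\cdot K,\aw,\dw)$, where $H\cdot K$ is formed from $H\sqcup K$ by contracting $b_i$ with $c_i$ for each $i$. The matrix product unfolds as $[T^G_\HG T^G_\KG]_{\jw\iw}=\sum_{\mathbf p}[T^G_\HG]_{\jw\mathbf p}[T^G_\KG]_{\mathbf p\iw}$, summed over multiindices $\mathbf p$ indexing $V^l$. The plan is to establish a bijection: a homomorphism $\phi\colon H\cdot K\to G$ with $\phi(\aw)=\iw$, $\phi(\dw)=\jw$ corresponds exactly to a pair consisting of a homomorphism $K\to G$ and a homomorphism $H\to G$ that \emph{agree} on the contracted vertices, namely $\phi_K(b_i)=\phi_H(c_i)=:p_i$ for all $i$. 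Summing over all possible common values $\mathbf p=(p_1,\dots,p_l)$ recovers exactly the matrix-product sum. The one subtlety to handle carefully is the phrase ``ignoring the resulting edge multiplicities'' in the definition of $H\cdot K$: since $G$ is a simple graph (at most one edge between two vertices, possibly with loops) and a homomorphism only needs to send edges to edges, collapsing multiple parallel edges to a single edge does not change which maps are homomorphisms, so the edge-multiplicity convention is harmless for the count.

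I expect the main obstacle to be making the bijection in step (2) fully rigorous, in particular verifying that the gluing of a homomorphism on $K$ and one on $H$ that agree on the identified vertices is indeed a well-defined homomorphism on the contracted graph $H\cdot K$, and conversely that any homomorphism on $H\cdot K$ restricts compatibly. This requires checking that every edge of $H\cdot K$ comes from an edge of $H$ or of $K$ (which holds since contraction of vertices creates no new edges beyond images of existing ones), so that edge-preservation on each piece implies edge-preservation on the union. Once this bijection is set up, all three parts follow by elementary counting, and the result is that $\KG\mapsto T^G_\KG$ is a monoidal unitary functor as claimed.
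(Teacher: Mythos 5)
Your proof is correct, and it is essentially the standard argument: the paper itself states this theorem without proof (it is quoted as a preliminary from the reference [MR19]), and the proof given there proceeds by exactly the same homomorphism-counting bijections you set up, including the decomposition of the matrix-product sum over intermediate multiindices $\mathbf p$ into homomorphisms of the contracted graph $H\cdot K$. Your treatment of the two subtleties — that contraction creates no edges other than images of existing ones, and that collapsing parallel edges is harmless because the homomorphism condition only concerns adjacency in the simple graph $G$ — is exactly what is needed to make the composition case rigorous.
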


Consequently, for any graph category $\Cat\subset\Gat$, its image under~$T^G$, i.e.\ the collection
$$\RCat(k,l):=\spanlin\{T^G_\KG\mid \KG\in\Cat(k,l)\}\subset\Lin((\C^n)^{\otimes k},(\C^n)^{\otimes l})$$
forms a~representation category. We can apply Woronowicz--Tannaka--Krein theorem to this category and associate a~compact matrix quantum group to it.

\begin{cor}[{\cite[Theorem~8.2]{MR19}}]
Let $G$ be a~graph and $\Cat$ a~graph category. Then there exists a~unique orthogonal compact matrix quantum group $\GGr=(O(\GGr),u)$ such that
$$\Mor(u^{\otimes k},u^{\otimes l})=\spanlin\{T^G_{\KG}\mid\KG\in\Cat(k,l)\}.$$
\end{cor}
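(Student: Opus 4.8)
The plan is to apply the Woronowicz--Tannaka--Krein theorem to the candidate representation category obtained as the image of~$\Cat$ under the functor~$T^G$. Writing $n:=|V(G)|$, I set
$$\RCat(k,l):=\spanlin\{T^G_\KG\mid\KG\in\Cat(k,l)\}\subset\Lin((\C^n)^{\otimes k},(\C^n)^{\otimes l}).$$
The entire content of the corollary then reduces to checking that $\RCat$ satisfies the five defining axioms of a representation category, after which existence and uniqueness of $\GGr=(O(\GGr),u)$ with $\Mor(u^{\otimes k},u^{\otimes l})=\RCat(k,l)$ are immediate from Tannaka--Krein.

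For the closure axioms (1)--(3) I would invoke the preceding theorem, which states that $T^G$ is a monoidal unitary functor, i.e.\ $T^G_{\KG\otimes\HG}=T^G_\KG\otimes T^G_\HG$, $T^G_{\HG\KG}=T^G_\HG T^G_\KG$, and $T^G_{\KG^*}=T^{G*}_\KG$. Since $\Cat$ is a graph category it is closed under tensor product, composition, and involution, so each of these properties transports to the generating operators $T^G_\KG$. Because the three categorical operations are bilinear (respectively linear/antilinear) in the operators, they pass to arbitrary linear combinations, and hence the spans $\RCat(k,l)$ inherit closure under~$\otimes$, composition, and the adjoint. For the two remaining axioms I would identify the required distinguished morphisms with images of the distinguished bilabelled graphs that every graph category contains by definition: the single-vertex graph $\MG^{1,1}\in\Cat$ has image $T^G_{\MG^{1,1}}=1_n$, and closure of $\Cat$ under tensor product yields $(\MG^{1,1})^{\otimes k}\in\Cat$ with image $1_n^{\otimes k}\in\RCat(k,k)$, giving axiom~(4); evaluating the homomorphism-counting formula on $\MG^{0,2}=(M,\emptyset,vv)\in\Cat$ forces the two outputs to coincide, so $T^G_{\MG^{0,2}}=\sum_{k=1}^n e_k\otimes e_k$, which is exactly the duality morphism of axiom~(5).

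With all five axioms verified, Tannaka--Krein produces a unique orthogonal compact matrix quantum group $\GGr$ with $\RCat_\GGr=\RCat$, which is precisely the asserted identity $\Mor(u^{\otimes k},u^{\otimes l})=\spanlin\{T^G_\KG\mid\KG\in\Cat(k,l)\}$; uniqueness of $\GGr$ is inherited directly from the uniqueness clause of Tannaka--Krein. I do not expect a serious obstacle here, as the statement is genuinely a corollary of the functoriality theorem together with Tannaka--Krein. The only points needing real (if brief) attention are that the passage to linear spans does not disturb the closure properties---this is where linearity of the categorical operations is used---and the two explicit entrywise computations of $T^G_{\MG^{1,1}}$ and $T^G_{\MG^{0,2}}$ from the homomorphism-counting formula, which confirm that the identity and duality morphisms of $\RCat$ indeed arise as images of the graphs that a graph category is required to contain.
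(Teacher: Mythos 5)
Your proposal is correct and follows essentially the same route as the paper: the paper likewise obtains the corollary by observing that the image $\spanlin\{T^G_\KG\mid\KG\in\Cat(k,l)\}$ of a graph category under the monoidal unitary functor $T^G$ forms a representation category and then invoking the Woronowicz--Tannaka--Krein theorem. Your explicit verification of the five axioms (including the entrywise computations $T^G_{\MG^{1,1}}=1_n$ and $T^G_{\MG^{0,2}}=\sum_k e_k\otimes e_k$) simply fills in details the paper leaves implicit in its word ``Consequently''.
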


We can actually construct the quantum group very explicitly. The associated $*$\hbox{-}algebra can be defined as the universal $*$-algebra satisfying the intertwiner relations $u^{\otimes k}T=Tu^{\otimes l}$:
$$O(\GGr)=\staralg(u_{ij};\;i,j=1,\dots,n\mid T^G_{\KG}u^{\otimes k}=u^{\otimes l}T^G_\KG\;\forall\KG\in\Cat(k,l);\;k,l\in\N_0).$$
Actually, thanks to the Frobenius reciprocity, we may only consider the bilabelled graphs with output vertices only:
$$O(\GGr)=\staralg(u_{ij};\;i,j=1,\dots,n\mid u^{\otimes k}T^G_\KG=T^G_\KG\;\forall\KG\in\Cat(0,k);\;k\in\N_0).$$

Those ideas formulated in \cite{MR19} constitute a~generalization of the work \cite{BS09}, which used categories of partitions to construct quantum groups. Recall that the category of all partitions embed into the category of all graphs. Let $G$ be a~graph and $n:=|V(G)|$. Consider a~partition $\PG\in\Part(k,l)$, which can also be interpreted as an edgeless bilabelled graph $\PG\in\Gat(k,l)$. Then regardless the structure of~$G$, we have $T^G_\PG=T^{(n)}_\PG$, where the entries of $T^{(n)}_\PG$ are given by ``blockwise Kronecker delta'' $[T^{(n)}_\PG]_{\jw\iw}=\delta_{\PG}(\iw,\jw)$. That is, assign the $k$ points in the upper row of~$p$ the numbers $i_1,\dots,i_k$ (from left to right) and the $l$ points in the lower row $j_1,\dots,j_l$ (again from left to right). Then $\delta_\PG(\iw,\jw)=1$ if the points belonging to the same block are assigned the same numbers. Otherwise $\delta_\PG(\iw,\jw)=0$. 

To bring an example, recall the partitions $p$ and~$q$ from Equation~\ref{eq.pq}.
$$
\PG=
\Graph{
\GV 0.5:2.5;
\GV 0:1,4;
\GS 0.5/1:2.5/1,2.5/2,2.5/3;
\GS 0.5/0:2.5/2,2.5/3;
\GS 1/1.5:1/,2/,3/;
\GS 0/-0.5:1/,2/,3/,4/;
}\in\Part(3,4)
\qquad
\QG=
\Graph{
\GV 1:1,4;
\GV 0:1;
\GV 0.25:2.25,3.5;
\GS 0.25/0:2.25/2,3.5/3,3.5/4;
\GS 0.25/1:2.25/3,3.5/2;
\GS 1/1.5:1/,2/,3/,4/;
\GS 0/-0.5:1/,2/,3/,4/;
}\in\Part(4,4)
$$
In this case, we have
$$\delta_{\PG}(\iw,\jw)=\delta_{i_1i_2i_3j_2j_3},\qquad\delta_\QG(\iw,\jw)=\delta_{i_2j_3j_4}\delta_{i_3j_2}.$$

\begin{ex}[Important graph categories and the associated quantum groups]
In this paper, we will use the following two important results: Let $G$ be a~graph with $n$ vertices.

The category of all partitions~$\Part$ corresponds to the group of all permutations~$S_n$ represented by permutation matrices \cite{HR05,BS09}. That is, considering $n\in\N$, denote by~$A_\sigma$ the permutation matrix corresponding to a~permutation $\sigma\in S_n$. Then
$$\{T\colon(\C^n)^{\otimes k}\to(\C^n)^{\otimes l}\mid TA_\sigma^{\otimes k}=A_\sigma^{\otimes l}T\;\forall\sigma\in S_n\}=\spanlin\{T^{(N)}_\PG\mid \PG\in\Part(k,l)\}.$$

The category of all graphs~$\Gat$ corresponds to the group of all automorphisms $\Aut G$ of the given graph~$G$ \cite{MR19}.
That is, intertwiners of $\Aut G$ are given by
$$\{T\colon(\C^n)^{\otimes k}\to(\C^n)^{\otimes l}\mid TA_\sigma^{\otimes k}=A_\sigma^{\otimes l}T\;\forall\sigma\in \Aut G\}=\spanlin\{T^{G}_\KG\mid \KG\in\Gat(k,l)\}.$$
\end{ex}

\subsection{CMQGs associated to group-theoretical categories of partitions}

The quantum groups corresponding to group-theoretical categories of partitions were identified in \cite{RW15}:

\begin{thm}[{\cite[Theorem~4.5]{RW15}}]
\label{T.easypartQG}
Consider a~natural number $n\in\N$. Let $\Cat$ be a~group-theoretical category of partitions. Denote by $A\trianglelefteq\Z_2^{*\infty}$ the associated normal subgroup and by $A_n\trianglelefteq\Z_2^{*n}$ its subgroup using only $n$ generators (as in Example~\ref{E.easypart}). Denote $\Gamma:=\Z_2^{*n}/A_n$. Then the quantum group associated to~$\Cat$ is of the form
$$\GGr=\hat\Gamma\rtimes S_n.$$
\end{thm}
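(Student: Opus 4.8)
The target statement (Theorem~\ref{T.easypartQG}) is a result from \cite{RW15} that identifies the quantum group associated to a group-theoretical category of partitions. Let me think about how I would prove it using the machinery developed so far.

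We have a group-theoretical category of partitions $\Cat$, its associated $sS_V$-invariant normal subgroup $A \trianglelefteq \Z_2^{*\infty}$, and $\Gamma = \Z_2^{*n}/A_n$. We must show the quantum group $\GGr$ with $\Mor(u^{\otimes k}, u^{\otimes l}) = \spanlin\{T^{(n)}_\PG \mid \PG \in \Cat(k,l)\}$ equals $\hat\Gamma \rtimes S_n$.

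**Approach.** The cleanest strategy is to show the two quantum groups have the same intertwiner spaces and invoke Woronowicz--Tannaka--Krein uniqueness. But an even more direct route, and the one I would actually take, is to compute the universal algebra $O(\GGr)$ explicitly from the intertwiner relations and identify it with $\C\Gamma \otimes O(S_n)$ with the semidirect-product comultiplication. Let me sketch this.

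First I would write down the universal algebra $O(\GGr) = \staralg(u_{ij} \mid u^{\otimes k} T^{(n)}_\PG = T^{(n)}_\PG u^{\otimes k} \ \forall \PG \in \Cat(k,l))$. The key is to decompose the generating relations. Since $\Cat$ is group-theoretical, it contains $\Part$ (all partitions arise from joining blocks, and $\Cat \supset \langle \Ggrpth \rangle \supset \Part$ — more precisely every group-theoretical category contains the whole category of partitions that generates $S_n$-invariance). The relations coming from the subcategory $\Part \subset \Cat$ alone already force $u$ to have the structure $u_{ij} = p_i v_{ij}$ where $v = (v_{ij})$ are the generators of $O(S_n)$ (magic unitary / permutation relations) and $p_i := \sum_k u_{ik}$ are self-adjoint with $p_i^2 = 1$. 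This is exactly the decomposition $u = \gamma v$ appearing in the semidirect-product example, and it identifies $O(\GGr)$ as a quotient of $\C\Z_2^{*n} \otimes O(S_n)$.

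Next, I would analyze the \emph{additional} relations coming from the group-theoretical generator $\Ggrpth$ and, more generally, from the partitions in $\Cat$ that are not in $\Part$. The crucial computation is to show that the intertwiner relation $u^{\otimes k} T^{(n)}_\PG = T^{(n)}_\PG u^{\otimes k}$ for a block-joining partition translates into a relation on the $p_i$. Writing $\PG = \ker(\aw,\bw)$, the entries of $T^{(n)}_\PG$ are the blockwise Kronecker deltas $\delta_\PG(\iw,\jw)$, and pushing the $u_{ij} = p_i v_{ij}$ decomposition through the relation should yield precisely $g_{\aw^* \bw} = 1$ in the group generated by the $p_i$, i.e.\ the relations defining the quotient $\Z_2^{*n}/A_n = \Gamma$. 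Thus the $p_i$ generate a copy of $\C\Gamma$, and $O(\GGr) \cong \C\Gamma \otimes O(S_n)$. Finally I would check that the comultiplication $\Delta(u_{ij}) = \sum_k u_{ik} \otimes u_{kj}$ restricted to this algebra is exactly the double-crossed-product comultiplication making $\GGr = \hat\Gamma \rtimes S_n$, using that $S_n$ acts on $\Gamma = \Z_2^{*n}/A_n$ by permuting generators (well-defined precisely because $A$ is $S_V$-invariant, hence $A_n$ is $S_n$-invariant).

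**Main obstacle.** The hard part is the middle step: showing that the intertwiner relations from the group-theoretical partitions collapse \emph{exactly} to the defining relations of $\Gamma$ — neither more nor fewer. One direction (the relations of $\Cat$ imply $g_{\aw^*\bw} = 1$ whenever $\ker(\aw,\bw) \in \Cat$) is a direct computation with the $p_i = \sum_k u_{ik}$. The subtle reverse direction requires showing that these are \emph{all} the relations, i.e.\ that no intertwiner of $\Cat$ imposes anything beyond the permutation relations and the group relations $A_n$; this is where the $sS_V$-invariance is essential, as it guarantees the correspondence of Theorem~\ref{T.partcor} is exact and that the block-joining structure of $\Cat$ matches the normal-subgroup structure of $A$ without loss. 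I would handle this by checking the span of $\{T^{(n)}_\PG \mid \PG \in \Cat\}$ against $\Mor$ of the explicitly constructed $\hat\Gamma \rtimes S_n$ and invoking the faithfulness of the Tannaka--Krein correspondence, citing \cite[Sect.~2.5]{RW15} for the semidirect-product bookkeeping.
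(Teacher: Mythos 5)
Your overall strategy --- decompose $u=\gamma v$ and translate the intertwiner relations into group relations in both directions --- is the same one the paper uses to prove its generalization, Theorem~\ref{T.Geasy} (the statement itself is only quoted from \cite{RW15}; the paper's own proof of this material is the proof of Theorem~\ref{T.Geasy}, of which the present theorem is essentially the special case $G=N_n$). However, your first step rests on a claim that is false, and in fact backwards. A group-theoretical category of partitions does \emph{not} contain $\Part$, and $\langle\Ggrpth\rangle\supset\Part$ is wrong: every category of partitions satisfies $\Cat\subset\Part$ by definition, so here $\langle\Ggrpth\rangle\subset\Cat\subset\Part$, with $\Cat=\Part$ only in the degenerate case $A_n=\Z_2^{*n}$, i.e.\ $\Gamma$ trivial. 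This matters because your claim, if true, would destroy the theorem rather than prove it: $\Part\subset\Cat$ gives $\RCat_\GGr\supset\RCat_{S_n}$, hence $\GGr\subset S_n$, so $u$ itself would be a magic unitary and your elements $p_i=\sum_k u_{ik}$ would all equal $1$, leaving no room for $\hat\Gamma$. The decomposition $u_{ij}=p_iv_{ij}$ with genuinely free $p_i$ is instead justified by $\Ggrpth\in\Cat$, hence $\Cat\supset\langle\Ggrpth\rangle$, hence $\GGr\subset\widehat{\Z_2^{*n}}\rtimes S_n$, the quantum group associated to $\langle\Ggrpth\rangle$; this is a nontrivial base case (itself from \cite{RW15}) that must be invoked or proved, and it is exactly how the paper opens the proof of Theorem~\ref{T.Geasy}.

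The second weak point is the reverse inclusion, which you correctly identify as the crux but then dispose of by ``checking the span against $\Mor$ and invoking faithfulness of Tannaka--Krein'' --- that is a restatement of the goal, not an argument. Concretely, one must show that for every $\PG=\ker(\aw,\bw)\in\Cat$ the relation $T^{(n)}_\PG u^{\otimes k}=u^{\otimes l}T^{(n)}_\PG$ holds in $\hat\Gamma\rtimes S_n$. Since $T^{(n)}_\PG$ is an intertwiner of $S_n$, this reduces to showing $g_{\iw^*\jw}\in A_n$ for every pair of multi-indices with $\delta_\PG(\iw,\jw)=1$, i.e.\ for every image of $(\aw,\bw)$ under an arbitrary, possibly non-injective, relabelling of the blocks; this is precisely the point where $sS_V$-invariance (invariance under arbitrary finitary maps, not merely bijections) enters. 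In the paper's graph-level proof this step is delegated to Proposition~\ref{P.alteasy} (easy full fibrations are closed under \emph{all} homomorphisms, not just embeddings). You name $sS_V$-invariance as essential, which is correct, but the proposal never actually carries out this step.
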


Now a~natural question is, what if $A$ is $S_\infty$-invariant, but not $sS_\infty$-invariant? In that case, the quantum group $\GGr:=\hat\Gamma\rtimes S_n$ does not correspond to any category of partitions. But can we describe the associated intertwiner spaces in a~different way? This question is answered in \cite{Maa18}, where skew categories of partitions were introduced for this purpose.

Let $\PG\in\Part(k,l)$ be a~partition and $n\in\N$. We define
$$\hat\delta_\PG^{(n)}(\iw,\jw)=\begin{cases}1&\PG=\ker(\iw,\jw)\\0&\text{otherwise.}\end{cases}$$
This defines a~linear map $\hat T^{(n)}_\PG\colon(\C^n)^{\otimes k}\to(\C^n)^{\otimes l}$ by $[\hat T^{(n)}_\PG]_{\jw\iw}:=\hat\delta_\PG(\iw,\jw)$.

The assignment $\PG\mapsto\hat T^{(n)}_\PG$ is not a~functor! Despite this, the following holds.

\begin{thm}[{\cite[Theorem~3.5]{Maa18}}]
Consider a~collection of subsets $\Cat(k,l)\subset\Part(k,l)$. Then
$$\RCat(k,l):=\spanlin\{\hat T^{(n)}_\PG\mid\PG\in\Cat(k,l)\}$$
forms a~representation category if and only if $\Cat$ is a~skew category of partitions.
\end{thm}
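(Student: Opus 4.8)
The plan is to exploit the one structural feature that makes these operators tractable: the operators $\hat T^{(n)}_\PG$ have \emph{pairwise disjoint supports}. Indeed, $[\hat T^{(n)}_\PG]_{\jw\iw}$ is nonzero (and then equal to $1$) exactly when $\ker(\iw,\jw)=\PG$, so a given index pair $(\iw,\jw)$ lies in the support of $\hat T^{(n)}_\PG$ for the single partition $\PG=\ker(\iw,\jw)$ and no other. Consequently the family $\{\hat T^{(n)}_\PG\}$ is linearly independent, $\hat T^{(n)}_\PG=0$ precisely when $\PG$ has an empty block or more than $n$ blocks (so that no index pair realises it), and every element of $\RCat(k,l)=\spanlin\{\hat T^{(n)}_\PG\mid\PG\in\Cat(k,l)\}$ has support equal to the union of the supports of those $\hat T^{(n)}_\PG$ entering it with nonzero coefficient. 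This last observation is the engine of the argument: whenever some operation applied to generators can be written as $\sum_\Rel c_\Rel\,\hat T^{(n)}_\Rel$ with $c_\Rel\neq0$, membership of this combination in $\RCat$ is, by disjointness of supports, equivalent to $\hat T^{(n)}_\Rel\in\RCat$ for each such $\Rel$, i.e.\ to $\Rel\in\Cat$.

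The core of the proof is to compute each representation-category operation on the $\hat T^{(n)}_\PG$ and recognise the partition operation it induces. A direct evaluation of matrix entries gives $1_n=\hat T^{(n)}_{\Gid}$ and $\sum_{k}e_k\otimes e_k=\hat T^{(n)}_{\Gpair}$, while $1_n^{\otimes k}=\sum_{p}\hat T^{(n)}_{\id_p}$, the sum running over patterns $p\in\Part(k)$ with $\id_p$ the corresponding identity partition. For the involution one checks, using that $\ker(\iw,\jw)=\PG$ iff $\ker(\jw,\iw)=\PG^*$, that $(\hat T^{(n)}_\PG)^*=\hat T^{(n)}_{\PG^*}$ with coefficient one. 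For the tensor product, $[\hat T^{(n)}_\PG\otimes\hat T^{(n)}_\QG]$ at an index pair is nonzero iff the two halves have kernels $\PG$ and $\QG$, which forces the kernel of the concatenated index to arise from $\PG\sqcup\QG$ by possibly identifying some blocks of $\PG$ with blocks of $\QG$; hence
$$\hat T^{(n)}_\PG\otimes\hat T^{(n)}_\QG=\sum_\Rel\hat T^{(n)}_\Rel,$$
where $\Rel$ ranges over all connected tensor products of $\PG$ and $\QG$ (all labellings) realisable with $n$ values, each with coefficient one. Finally, $\hat T^{(n)}_\QG\hat T^{(n)}_\PG$ is obtained by summing over a middle index $\mathbf{k}$, and the product of the two exact-kernel conditions vanishes identically unless the lower kernel of $\PG$ equals the upper kernel of $\QG$; when they match one gets $\hat T^{(n)}_\QG\hat T^{(n)}_\PG=\sum_\Rel c_\Rel\,\hat T^{(n)}_\Rel$, where $\Rel$ runs over the restricted connected compositions of $\PG$ and $\QG$ and the positive integers $c_\Rel$ count the admissible values for middle blocks that survive only as loops.

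With these formulas the equivalence assembles cleanly. For the forward direction, assume $\Cat$ is a skew category of partitions: it contains $\Gid$ and $\Gpair$, is closed under connected tensor products, restricted (hence restricted connected) compositions, involution and empty blocks, and contains every $\id_p$ (built from copies of $\Gid$ by connected tensor products). By the formulas above each of $1_n^{\otimes k}$, $\sum_k e_k\otimes e_k$, $(\hat T^{(n)}_\PG)^*$, $\hat T^{(n)}_\PG\otimes\hat T^{(n)}_\QG$ and $\hat T^{(n)}_\QG\hat T^{(n)}_\PG$ is then a sum of $\hat T^{(n)}_\Rel$ with all $\Rel\in\Cat$, so $\RCat$ satisfies axioms (1)--(5). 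For the converse, axioms (4) and (5) give $\hat T^{(n)}_{\Gid},\hat T^{(n)}_{\Gpair}\in\RCat$, i.e.\ $\Gid,\Gpair\in\Cat$; for each remaining operation the disjoint-support principle turns the computed identity around, since the output lies in $\RCat$ and equals $\sum_\Rel c_\Rel\hat T^{(n)}_\Rel$ with $c_\Rel\neq0$, forcing $\Rel\in\Cat$ for every such $\Rel$ and thereby recovering closure of $\Cat$ under involution, connected tensor products and restricted (connected) compositions.

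The step I expect to be delicate is the converse reading of the composition formula, the only place where the coefficients $c_\Rel$ and realisability matter. The disjoint-support inversion needs $c_\Rel\neq0$ for every $\Rel$ that should be forced into $\Cat$, yet for a fixed small $n$ a loop-carrying composition can have $c_\Rel=0$ (too few available values), and partitions with more than $n$ blocks give the zero operator and are invisible to $\RCat$ entirely. The clean resolution is that all $c_\Rel$ are strictly positive once $n$ is large relative to the partitions involved, so the correspondence is exact there; for fixed $n$ one recovers closure only among partitions realisable with $n$ values, which is precisely the part of $\Cat$ that $\RCat$ can detect. The empty-block closure in the definition of a skew category is likewise invisible to $\RCat$ (any empty block makes $\hat T^{(n)}_\PG=0$), so it is imposed by convention and causes no difficulty in either direction.
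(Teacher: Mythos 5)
Your proof is correct, and it is the right argument — but note that the paper does not actually prove this statement: it quotes it from \cite{Maa18}, and what it proves instead is the graph-level generalization in Section~\ref{sec.QG} (Proposition~\ref{P.That}, Lemma~\ref{L.ThatLI}, and the proposition that $\Cat^G$ is a representation category). Both that generalization and your argument run on the same two engines: compute the three categorical operations on the $\hat T$ maps as sums over ``connected'' operations, and use pairwise disjointness of supports to invert such identities. Two differences are worth recording. First, your composition formula carries multiplicity coefficients $c_\Rel$ counting the admissible values for middle blocks that survive only as loops, and these can vanish for small $n$; in the paper's graph formalism this never happens, because a loop block is retained as an unlabelled isolated vertex of $\HG\cdot_f\KG$ and the count of its possible images is absorbed into the definition of $\hat T^G$, so the formula $\hat T^G_\HG\hat T^G_\KG=\sum_f\hat T^G_{\HG\cdot_f\KG}$ has unit coefficients — this is precisely the bookkeeping advantage the graph formalism buys. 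Second, your converse direction (which the paper never needs, even in generalized form) rests on the same disjoint-support mechanism as Lemma~\ref{L.ThatLI}, and you correctly flag the two points where the statement must be read with care: the forcing argument needs $c_\Rel\neq0$, hence $n$ large relative to the partitions involved — the theorem in \cite{Maa18} is quantified over all $n\in\N$, which is what rescues the ``only if''; for a single fixed $n$ it is literally false, since the collection of all partitions with at most $n$ blocks and no empty blocks spans the representation category of $S_n$ without being closed under (connected) tensor products — and closure under adding/removing empty blocks is invisible to $\RCat$ and must be imposed by convention. Identifying and resolving both of these edge cases is exactly what a complete write-up requires.
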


\begin{thm}[{\cite[Theorem~4.17]{Maa18}}]
Consider a~natural number $n\in\N$. Let $\Cat$ be a~skew category of partitions. Denote by $A\trianglelefteq\Z_2^{*\infty}$ the associated normal subgroup and by $A_n\trianglelefteq\Z_2^{*n}$ its subgroup using only $n$~generators (as in Example~\ref{E.easypart}). Denote $\Gamma:=\Z_2^{*n}/A_n$. Then the quantum group associated to the representation category from the previous theorem is of the form
$$\GGr=\hat\Gamma\rtimes S_n.$$
\end{thm}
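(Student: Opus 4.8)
The plan is to invoke Woronowicz--Tannaka--Krein uniqueness. By the previous theorem, the collection $\RCat(k,l)=\spanlin\{\hat T^{(n)}_\PG\mid\PG\in\Cat(k,l)\}$ is a representation category and hence corresponds to a unique orthogonal compact matrix quantum group; the theorem claims this quantum group is $\hat\Gamma\rtimes S_n$. It therefore suffices to compute the intertwiner spaces $\Mor(u^{\otimes k},u^{\otimes l})$ of $\GGr=\hat\Gamma\rtimes S_n$, where $u=\gamma v$ and $u_{ij}=\gamma_i v_{ij}$, and to check that they coincide with $\RCat(k,l)$ for all $k,l$. Then $\hat\Gamma\rtimes S_n$ \emph{is} the unique quantum group attached to $\RCat$, which is exactly the assertion. (Equivalently one verifies the two mutual quantum-subgroup inclusions via the contravariant correspondence between quantum subgroups and intertwiner categories, but this reduces to the same computation.)

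The heart of the argument is to split the intertwiner relation $Tu^{\otimes k}=u^{\otimes l}T$ into an $S_n$-part and a $\Gamma$-part. Working in the crossed product $O(\hat\Gamma\rtimes S_n)=\C\Gamma\rtimes O(S_n)$, I would bring each monomial $\prod_a u_{i'_a i_a}=\prod_a\gamma_{i'_a}v_{i'_a i_a}$ into the normal form with all $\gamma$'s to the left, so that it becomes the $\C\Gamma$-valued function $\sigma\mapsto\sigma(g_\iw)$ supported on $\{\sigma:\iw'=\sigma(\iw)\}$, where $g_\iw:=g_{i_1}\cdots g_{i_k}\in\Gamma$ is the grade of $e_\iw$ under $\gamma^{\otimes k}$ and $\sigma$ acts on $\Gamma$ by permuting generators (which is well defined since $A_n$ is $S_n$-invariant). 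Evaluating the intertwiner relation at a point $\sigma\in S_n$ and using the covariance relation (which turns $g_{\sigma^{-1}(\jw)}$ back into $g_\jw$) yields, for all $\sigma,\iw,\jw$, the single identity
\[
[T]_{\jw,\sigma(\iw)}\,\sigma(g_\iw)=[T]_{\sigma^{-1}(\jw),\iw}\,g_\jw \qquad\text{in }\C\Gamma.
\]
Taking $\sigma=e$ gives $[T]_{\jw\iw}(g_\iw-g_\jw)=0$, i.e.\ the \emph{grade condition} $[T]_{\jw\iw}\neq0\Rightarrow g_\iw=g_\jw$ in $\Gamma$; feeding this back for general $\sigma$ and cancelling the invertible group element $g_\jw\in\Gamma\subset\C\Gamma$ gives the \emph{$S_n$-invariance} $[T]_{\jw\iw}=[T]_{\sigma(\jw),\sigma(\iw)}$. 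Conversely, these two conditions immediately reproduce the displayed identity, so $T$ is an intertwiner of $\hat\Gamma\rtimes S_n$ precisely when it is $S_n$-invariant and grade preserving.

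It remains to identify this space with $\RCat(k,l)$. The $S_n$-invariant maps are exactly those whose entry $[T]_{\jw\iw}$ is constant on the diagonal $S_n$-orbit of $(\iw,\jw)$, i.e.\ depends only on $\ker(\iw,\jw)$, so they are $\spanlin\{\hat T^{(n)}_\PG\mid\PG\in\Part(k,l)\}$; partitions with more than $n$ blocks give $\hat T^{(n)}_\PG=0$ and are discarded. A partition $\PG$ with at most $n$ blocks is written via a canonical labelling $\PG=\ker(\aw,\bw)$, and every pair with $\ker(\iw,\jw)=\PG$ arises from an injection $\phi$ of the blocks into $\{1,\dots,n\}$, whence $g_\iw=\phi(g_\aw)$ and $g_\jw=\phi(g_\bw)$. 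Thus $\hat T^{(n)}_\PG$ is grade preserving iff $\phi(g_{\aw^*\bw})\in A_n$; since $A$ is $S_V$-invariant and $A_n=A\cap\Z_2^{*n}$, this holds for one, equivalently every, such $\phi$ iff $g_{\aw^*\bw}\in A$, which by the correspondence between skew categories of partitions and $S_V$-invariant normal subgroups is exactly the statement $\PG\in\Cat$. Combining the two conditions gives $\Mor(u^{\otimes k},u^{\otimes l})=\spanlin\{\hat T^{(n)}_\PG\mid\PG\in\Cat(k,l)\}=\RCat(k,l)$, and Tannaka--Krein completes the proof.

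The step I expect to be the main obstacle is the normal-form computation underlying the split. Because $\Gamma$ is in general nonabelian, the grade $g_\iw$ is an \emph{ordered} product, and one must track carefully how the permutation $\sigma$ twists the grading through the covariance relations of $\C\Gamma\rtimes O(S_n)$; the abelian hyperoctahedral case $\Gamma=\Z_2^n$, where grades are characters and plain evaluation at a signed permutation suffices, hides precisely this difficulty. Making rigorous that the displayed identity really encodes the full intertwiner relation in the crossed product, and that the cancellation of $g_\jw\in\Gamma$ is legitimate, is the one genuinely delicate point; everything else is the bookkeeping of the skew-category correspondence already recalled in the excerpt.
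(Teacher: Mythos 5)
Your proof is correct, but it takes a genuinely different route from the one in the paper. Note first that the paper does not prove this statement itself -- it quotes it from [Maa18] -- but it does prove generalizations containing it as a special case (Theorem~\ref{T.Geasy}, Theorem~\ref{T.G}, and Theorem~\ref{T.intsp}, the last of which recovers the statement by taking $H=S_n$, since $\hat T^{\aw\bw}_{S_n}$ is a positive multiple of $\hat T^{(n)}_{\ker(\aw,\bw)}$). The paper's method is relation-chasing on generators: from $\Ggrpth\in\Cat$ it deduces $\GGr\subset\widehat{\Z_2^{*n}}\rtimes S_n$ and writes $u=\gamma v$; it then derives the relations of $\Gamma$ by evaluating the intertwiner relation $u^{\otimes k}\xi^{\aw}=\xi^{\aw}$ at the index $\iw=\aw$, derives the relations of the symmetric (or automorphism) group via the fork/merge trick $v=T_{\Gmerge}(u\otimes u)T_{\Gfork}$, and for the reverse inclusion uses an explicit linear basis of the morphism spaces (Proposition~\ref{P.ThatLB}, resp.\ Proposition~\ref{P.intsp}) to verify that $\hat\Gamma\rtimes H$ satisfies all the relations. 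You instead compute the full intertwiner spaces of $\hat\Gamma\rtimes S_n$ in one stroke, by evaluating the relation $Tu^{\otimes k}=u^{\otimes l}T$ pointwise on the finite classical group $S_n$, characterizing intertwiners as exactly the $S_n$-invariant, grade-preserving maps, and then matching this with $\spanlin\{\hat T^{(n)}_\PG\mid\PG\in\Cat(k,l)\}$ via the correspondence $\PG\in\Cat\Leftrightarrow g_{\aw^*\bw}\in A$; Tannaka--Krein uniqueness finishes the argument. Your approach is more self-contained and transparent here precisely because $S_n$ is classical and finite, so $O(S_n)$ is a genuine function algebra; the paper's generator-based skeleton is what scales to the graph setting, where one replaces $S_n$ by $\Aut K$ and needs the diagrammatic machinery and the basis results. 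One remark: the ``delicate point'' you flag -- normal-form bookkeeping in the crossed product -- is actually vacuous in the convention of this paper and of [RW15]: the algebra $O(\hat\Gamma\rtimes H)$ is the \emph{untwisted} tensor product $\C\Gamma\otimes O(H)$ (only the comultiplication is twisted by the action), so the $\gamma_i$ commute with the $v_{ij}$, the monomial $(u^{\otimes k})_{\iw\iw'}$ is literally $\gamma_{\iw}v_{\iw\iw'}$, and your pointwise evaluation identity follows immediately; nonabelianness of $\Gamma$ causes no difficulty because the group elements $g_{\iw}$ remain invertible (and linearly independent) in $\C\Gamma$, which is all your cancellation step uses.
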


\section{CMQGs associated to group-theoretical graph categories}
\label{sec.QG}

\subsection{The easy case}
Let $G$ be a graph. In the following theorem, we characterize the compact matrix quantum group associated through the functor $T^G$ to a~group-theoretical graph category $\Cat$ corresponding to some graph fibration $F$ assuming that $G\in F$. Recall from Proposition~\ref{P.easyK} that, if $G$ has a loop at every vertex, then $G\in F$ is satisfied automatically unless $F$ contains edgeless graphs only (which would mean that $\Cat$ is a category of partitions and this case is already handled by Theorem~\ref{T.easypartQG}).

\begin{thm}
\label{T.Geasy}
Let $G$ be a~graph, $F$~an easy graph fibration with $G\in F$. Denote by~$\Cat$ the group-theoretical graph category associated to~$F$ and by~$\GGr$ the quantum group associated to $\Cat$ and~$G$. Then
$$\GGr=\hat\Gamma\rtimes\Aut G,$$
where $\Gamma=\Z_2^{*V(G)}/F(G)$.
\end{thm}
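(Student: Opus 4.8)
The plan is to identify the quantum group $\GGr$ defined by the representation category $\RCat_\GGr = \spanlin\{T^G_\KG \mid \KG\in\Cat\}$ with the semidirect product $\hat\Gamma\rtimes\Aut G$, and the natural route is via Woronowicz--Tannaka--Krein uniqueness: it suffices to verify that $\hat\Gamma\rtimes\Aut G$ has exactly the intertwiner spaces $\spanlin\{T^G_\KG\}$. Equivalently, since quantum subgroups correspond to reverse inclusions of representation categories, I would characterize $\GGr$ directly by reading off the defining relations on the fundamental matrix $u$ from the generating intertwiners. By Frobenius reciprocity (the remark at the end of Section~\ref{secc.graphcat}), it is enough to impose the relations $u^{\otimes k}T^G_\KG = T^G_\KG$ for bilabelled graphs $\KG=(K,\emptyset,\bw)\in\Cat(0,k)$ with output vertices only, and by Theorem~\ref{T.comb} these correspond exactly to pairs $(K,g_\bw)\in F$, i.e.\ to $g_\bw\in F(K)$ ranging over fibres $K$.

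First I would compute what the intertwiner relation $u^{\otimes k}T^G_\KG = T^G_\KG$ says concretely for such a $\KG$. Writing $u=\gamma v$ as in the semidirect product construction and unwinding the homomorphism-counting formula for $[T^G_\KG]_{\jw\iw}$ (with no input vertices, this counts homomorphisms $\phi\colon K\to G$ with $\phi(\bw)=\jw$), I expect the relation to split into two families: relations that only involve the ``permutation part'' $v$, and relations that involve the ``group part'' $\gamma$. The $v$-part should force $v$ to be a permutation matrix commuting with the adjacency structure detected by the graphs $K\in F$; since $F$ is easy (full up to the partition case) and $G\in F$, the graphs in $F$ that embed into $G$ should cut out exactly $\Aut G$, matching the known result that the category of all graphs yields $\Aut G$. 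The $\gamma$-part should encode precisely the relations $g_\bw = e$ for $g_\bw\in F(G)$ after specializing along a homomorphism into $G$, which is how the quotient $\Gamma = \Z_2^{*V(G)}/F(G)$ enters: the generators $\gamma_i$ indexed by vertices of $G$ satisfy exactly the relations defining $\Gamma$.

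The key technical step is therefore to show that the universal $*$-algebra generated by the $u_{ij}$ subject to $u^{\otimes k}T^G_\KG=T^G_\KG$ for all $\KG=(K,\emptyset,\bw)\in\Cat(0,k)$ is isomorphic as a CMQG to $\hat\Gamma\rtimes\Aut G = (\C\Gamma\otimes O(\Aut G),\gamma v)$. I would do this by exhibiting the isomorphism on generators, $u_{ij}\mapsto \gamma_i v_{ij}$, and checking both directions: that $\gamma v$ satisfies all the imposed relations (so we get a surjection onto $\GGr$, using that $G\in F$ together with Proposition~\ref{P.alteasy} to guarantee the relevant fibres $F(K)$ map correctly under homomorphisms $K\to G$), and conversely that the defining relations already imply $u_{ij}=\gamma_i v_{ij}$ for elements $\gamma_i,v_{ij}$ satisfying the defining relations of $\C\Gamma$ and $O(\Aut G)$ respectively. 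Crucially, the easiness of $F$ (axiom (F4), closure under quotients) is what guarantees that the relations obtained from abstract fibres $K$, pulled back along \emph{all} homomorphisms $\phi\colon K\to G$ (not merely embeddings), are available---this is exactly Proposition~\ref{P.alteasy}---so that $F(G)$ captures the full set of $\gamma$-relations and the quotient is precisely by $F(G)$.

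\textbf{Main obstacle.} The hardest part will be disentangling the two families of relations cleanly, that is, proving that the intertwiner relations from $\Cat$ decompose into the ``$\Aut G$ relations'' on $v$ and the ``$\Gamma$ relations'' on $\gamma$ without cross-terms, and that no \emph{additional} relations are imposed beyond these two families. Concretely, I expect to need a careful analysis of the homomorphism-counting matrices $T^G_\KG$ to show that spanning the images over all of $\Cat$ recovers exactly $\Mor(u^{\otimes k},u^{\otimes l})$ for $u=\gamma v$; the subtlety is the interplay between the combinatorial data of $G$ (which selects which abstract fibres $K$ contribute, via homomorphisms $K\to G$) and the group-theoretic data $F(G)$ (which determines the relations on $\gamma$). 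I anticipate that the cleanest way to handle this is to leverage the already-known easy-partition result (Theorem~\ref{T.easypartQG}) as a template: the partition subcategory of $\Cat$ should already yield the $S_n$-to-$\Aut G$ reduction together with the $\Gamma$ structure, and the genuinely graph-theoretic generators of $\Cat$ then refine $S_n$ down to $\Aut G$ while leaving the $\gamma$-relations controlled by $F(G)$.
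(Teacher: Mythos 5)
Your plan is essentially the paper's own proof: decompose $u=\gamma v$ via the group-theoretical partition structure ($\Ggrpth\in\Cat$ forces $\GGr\subset\hat\Z_2^{*n}\rtimes S_n$), use Frobenius reciprocity to reduce to output-only graphs, extract the $\Aut G$-relations on $v$ and the $\Gamma$-relations on $\gamma$ from the intertwiners $T^G_{(G,\emptyset,\aw)}$ with $g_\aw\in F(G)$, and use easiness via Proposition~\ref{P.alteasy} for the converse inclusion---exactly as the paper does (the paper resolves your ``disentangling'' obstacle by the fork/merge trick $v=T_{\Gmerge}(u\otimes u)T_{\Gfork}$ applied to the doubled word $\tilde\vw$). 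Only a minor slip: checking that $\gamma v$ satisfies the imposed relations yields a surjection \emph{from} $O(\GGr)$ onto $O(\hat\Gamma\rtimes\Aut G)$, i.e.\ the inclusion $\hat\Gamma\rtimes\Aut G\subset\GGr$, not a surjection onto $\GGr$.
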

\begin{proof}
To make sense of the statement and the proof, we need to identify the vertices of~$G$ with numbers $1,\dots,n:=|V(G)|$. Let us denote by~$u$ the fundamental representation of~$\GGr$. We have $\Ggrpth\in\Cat$ and hence $\GGr\subset\Z_2^{*n}\rtimes S_n$. Therefore, we can write $u=\gamma v$, where $\gamma=\diag(\gamma_1,\dots,\gamma_n)$ with $\gamma_1,\dots,\gamma_n$ corresponding to the generators of~$\Z_2^{*n}$ and $v_{ij}$~corresponding to the generators of~$S_n$. So, $\gamma_k$ commute with~$v_{ij}$, the $v_{ij}$ are even central projections, and we can express $v_{ij}=v_{ij}^2=u_{ij}^2$ and $\gamma_i=\sum_j \gamma_iv_{ij}=\sum_j u_{ij}$.

Before going into the proof, let us denote by $\xi^\aw:=T^G_{(G,\emptyset,\aw)}$ the intertwiner associated to the bilabelled graph $(G,\emptyset,\aw)$ for some given word $\aw$. Recall that the entries of this vector are given by
$$\xi^\aw_\iw =\#\hbox{homomorphisms }\phi\colon G\to G\hbox{ such that }\phi(\aw)=\iw.$$

First, we are going to prove the inclusion~$\subset$. We need to show that the generators~$\gamma_i$ satisfy the relations of~$\Gamma$ and that the generators~$v_{ij}$ satisfy the relations of $\Aut G$. Let us start with the latter. 
Denote $\GG:=(G,\emptyset,\vw)$ and $\tilde\GG:=(G,\emptyset,\tilde\vw)$, where $\vw=(1,2,\dots,n)$ and $\tilde\vw=(1,1,2,2,\dots,n,n)$. Since $g_{\tilde\vw}=e$ and since $G\in F$, we surely have $\tilde\GG\in\Cat(0,2n)$. We can write
$$\tilde\GG=\Gfork\!\!^{\otimes n}\cdot\GG,$$
so the relation associated to $\tilde\GG\in\Cat(0,2n)$ can be written as
$$(u\otimes u)^{\otimes n}T_{\Gfork}^{\otimes n}\xi^\vw=T_{\Gfork}^{\otimes n}\xi^\vw.$$
One can easily check that $T_{\Gmerge}(u\otimes u)T_{\Gfork}=v$ since $u_{ij}^2=v_{ij}$. So, multiplying the relation above with $T_{\Gmerge}^{\otimes n}$ from left, we get exactly $v^{\otimes n}\xi^\vw=\xi^\vw$. Substituting $v$ by some permutation matrix $A_\sigma$ corresponding to a permutation $\sigma\in S_n$, we need to show that the relation implies that $\sigma\in\Aut G$. In terms of the entries, the relation then reads $\xi^\vw_\iw=[A_\sigma\xi^\vw]_\iw=\xi^\vw_{\sigma^{-1}(\iw)}$. Note that $\xi^\vw_\iw$ equals either one or zero depending on whether $j\mapsto i_j$ defines a homomorphism $G\to G$. Hence, if we choose $\iw:=(1,2,\dots,n)$, the relation exactly says that $\sigma$ should be an automorphism $G\to G$.

So, we have just proven that $v$ represents some subgroup of $\Aut G$. Consequently, its representation category contains all intertwiners associated to any bilabelled graph. In particular, $v^{\otimes k}\xi^{\aw}=\xi^\aw$ for any word $\aw$ with $g_\aw\in F(G)$ (this can actually be proven also directly by modifying the proof above).

Now, we derive the relations for the generators~$\gamma_i$. We need to show that $\gamma_{\aw}=\gamma_{a_1}\cdots \gamma_{a_k}$ equals identity for every word~$\aw$ such that $g_\aw\in F(G)$. So, take some word~$\aw$ such that $g_\aw\in F(G)$, for which we have the relation $\xi^\aw=u^{\otimes k}\xi^\aw=\gamma^{\otimes k}v^{\otimes k}\xi^\aw=\gamma^{\otimes k}\xi^\aw$, so
$$\gamma_{i_1}\cdots\gamma_{i_k}\xi^\aw_\iw =[\gamma^{\otimes k}\xi^\aw]_\iw =\xi^\aw_\iw .$$
If we choose $\iw :=\aw$, we surely have $\xi^\aw_\aw\neq 0$, so we can divide by this number and get $\gamma_{a_1}\cdots\gamma_{a_k}=1$, which is what we wanted.

To prove the opposite inclusion, we need to take arbitrary $\KG\in\Cat(k,l)$ and show that the relation $\tilde u^{\otimes l}T_\KG^G=T_\KG^G\tilde u^{\otimes k}$ is satisfied in $\hat\Gamma\rtimes\Aut G$. Here, we denote by~$\tilde u$ the fundamental representation of $\hat\Gamma\rtimes\Aut G$. Let us denote also $\tilde u=\tilde\gamma\tilde v$, where $\tilde\gamma$~is the fundamental representation of~$\hat\Gamma$ and $\tilde v$~is the fundamental representation of $\Aut G$. From Frobenius reciprocity, it is enough to consider bilabelled graphs~$\KG$ with output vertices only. For any such $\KG=(K,\emptyset,\aw)\in\Cat(0,k)$, denote by~$\xi^\aw_K$ the corresponding intertwiner. Since $v$ is the fundamental representation of $\Aut G$, we have again $\tilde u^{\otimes k}\xi^\aw_K=\tilde\gamma^{\otimes k}\xi^\aw_K$. The relation $[\tilde\gamma^{\otimes k}\xi^\aw_K]_\iw =[\xi^\aw_K]_\iw $ can be written as
$$\tilde\gamma_{i_1}\cdots\tilde\gamma_{i_k}=1\quad\hbox{whenever $\exists\phi\colon K\to G$ such that $\phi(\aw)=\iw $.}$$
So, we need to show that $g_\iw\in F(G)$ for all words $\iw\in V(G)^*$ such that there exists a~graph~$K$ and a~word $\aw\in V(K)^*$ such that $g_\aw\in F(K)$ and a~graph homomorphism $\phi\colon K\to G$ such that $\phi(\aw)=\iw$. But this directly follows from Proposition \ref{P.alteasy}.
\end{proof}

\begin{ex}
\label{E.abab3}
Recall the category $\langle\Ggrpth,\Gabab\rangle$ from Examples \ref{E.abab},~\ref{E.abab2}. Considering a~graph~$G$, this category corresponds to the semidirect product $\hat\Gamma\rtimes\Aut G$, where $\Gamma$ is a~group generated by some~$g_v$, $v\in V(G)$ such that $g_v^2=e$ for every~$v$ and $g_vg_w=g_wg_v$ if there is an edge $\{v,w\}\in E(G)$.

For example, consider $G=
\Graph{
\GV 1:1;
\GV 0:1;
\GV 0.5:2;
\GE 0/1:1/;
}$. In this case, $\Aut G=S_2\times E$, $\Gamma=(\Z_2\times\Z_2)*\Z_2$, so
$$\GGr=\hat\Gamma\rtimes\Aut G=\widehat{((\Z_2\times\Z_2)*\Z_2)}\rtimes(S_2\times E)=H_2\mathbin{\hat*}\Z_2,$$
where $H_2=\Z_2\wr S_2=(\Z_2\times\Z_2)\rtimes S_2$ is the \emph{hyperoctahedral group} and $\mathbin{\hat*}$~denotes the quantum group (dual) free product as defined by Wang~\cite{Wan95free}.
\end{ex}

\subsection{The $\hat T$ maps}
\label{secc.That}

In the following subsections, we are going to interpret the skew categories of graphs in terms of quantum groups. We start with generalizing the idea of the maps~$\hat T$.

%

\begin{defn}
Let $G$ be a~graph. Denote $n:=|V(G)|$ and label all vertices of~$G$ by numbers $1,\dots,n$. For every bilabelled graph $\KG=(K,\aw,\bw)\in\Gat(k,l)$, we define the map $\hat T_\KG^G\colon (\C^n)^{\otimes k}\to(\C^n)^{\otimes l}$ by
$$[\hat T_\KG^G]_{\jw\iw}:=\#\{\hbox{injective homomorphisms }\phi\colon K\to G\mid\phi(\aw)=\iw,\phi(\bw)=\jw\}.$$
\end{defn}

\begin{rem}
\label{R.Thatzero}
Considering some bilabelled graph $\KG=(K,\aw,\bw)$, we have $\hat T_\KG=0$ unless $K$~is a~subgraph of~$G$. In particular, we have $\hat T_\KG=0$ whenever $V(K)>V(G)$.
\end{rem}

The following result constitutes a~generalization of \cite[Lemma~3.2]{Maa18}.

\begin{prop}
\label{P.That}
Consider $\KG=(K,\aw,\bw)\in\Gat(k,l)$, $\HG=(H,\aw',\bw')\in\Gat(k',l')$. Then the following holds.
\begin{enumerate}
\item $\hat T_{\KG}^G\otimes\hat T_{\HG}^G=\sum_f \hat T_{\KG\cup_f\HG}^G$.
\item $\hat T_{\HG}^G\hat T_{\KG}^G=\sum_f\hat T_{\HG\cdot_f\KG}^G$ (supposing $k'=l$). In particular $\hat T_{\HG}^G\hat T_{\KG}^G=0$ if $\ker\bw\neq\ker\aw'$.
\item $(\hat T_{\KG}^G)^*=\hat T_{\KG^*}^G$.
\end{enumerate}
\end{prop}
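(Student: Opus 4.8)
The plan is to prove all three identities entrywise, reading each matrix entry as a count of injective homomorphisms into $G$ and then exhibiting the relevant bijection. I would dispatch~(3) first: the entries of $\hat T^G_\KG$ are non-negative integers, so the adjoint is simply the transpose, and $[(\hat T^G_\KG)^*]_{\iw\jw}=[\hat T^G_\KG]_{\jw\iw}$ counts injective homomorphisms $\phi\colon K\to G$ with $\phi(\aw)=\iw$ and $\phi(\bw)=\jw$. Since $\KG^*=(K,\bw,\aw)$, this is exactly the defining count of $[\hat T^G_{\KG^*}]_{\iw\jw}$, giving~(3).

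For~(1) I would expand $[\hat T^G_\KG\otimes\hat T^G_\HG]_{(\jw\jw')(\iw\iw')}=[\hat T^G_\KG]_{\jw\iw}\,[\hat T^G_\HG]_{\jw'\iw'}$, which counts pairs $(\phi,\psi)$ of injective homomorphisms $\phi\colon K\to G$ and $\psi\colon H\to G$ realizing the prescribed boundary data. The heart of the argument is a gluing/restriction correspondence. To a pair $(\phi,\psi)$ I associate its \emph{coincidence overlap} $f:=\{(v,w)\in V(K)\times V(H)\mid\phi(v)=\psi(w)\}$; injectivity of $\phi$ and $\psi$ makes $f$ an injective partial function, hence a genuine vertex overlap, and $\phi,\psi$ then glue to a homomorphism $\chi\colon K\cup_f H\to G$ which is itself injective precisely because $f$ records \emph{all} coincidences. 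Conversely every injective homomorphism of $K\cup_{f'}H$ restricts to a pair agreeing on $f'$. The key bookkeeping step is that a given pair $(\phi,\psi)$ is counted by the term $\hat T^G_{\KG\cup_{f'}\HG}$ for exactly one overlap, namely $f'=f$: agreement on $f'$ forces $f'\subseteq f$, while injectivity of the glued map forces $f\subseteq f'$. Summing over all overlaps $f$ then reproduces the left-hand side.

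Part~(2) I would treat as a variant of~(1). Writing $[\hat T^G_\HG\hat T^G_\KG]_{\mathbf m\iw}=\sum_{\mathbf l}[\hat T^G_\HG]_{\mathbf m\mathbf l}\,[\hat T^G_\KG]_{\mathbf l\iw}$ exhibits this entry as a count of pairs $(\phi,\psi)$ of injective homomorphisms with $\phi(\bw)=\psi(\cw)$ (the summed intermediate index), $\phi(\aw)=\iw$, and $\psi(\dw)=\mathbf m$. First I would read off the ``in particular'' clause: if $b_i=b_j$ then $\phi(b_i)=\phi(b_j)$ forces $\psi(c_i)=\psi(c_j)$, hence $c_i=c_j$ by injectivity of $\psi$, and symmetrically; so a nonzero contribution requires $\ker\bw=\ker\cw$. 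Granting this, the coincidence overlap $f$ of any contributing pair automatically contains every $(b_i,c_i)$, making it an admissible overlap for the $f$-composition, and the glued injective homomorphism $\chi\colon K\cup_f H\to G$ satisfies $\chi(\aw)=\iw$ and $\chi(\dw)=\mathbf m$, i.e.\ it is counted by $\hat T^G_{\HG\cdot_f\KG}=\hat T^G_{(K\cup_f H,\aw,\dw)}$. The ``exactly one $f$'' argument is identical to~(1).

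I expect the main obstacle to be precisely this ``exactly one $f$'' bookkeeping shared by~(1) and~(2): one must argue with care that injectivity of the homomorphisms is exactly what both singles out the canonical overlap and excludes every other overlap from the sum---larger ones by well-definedness of the gluing, smaller ones by injectivity of the glued map. This is the point that distinguishes the present injective count from the ordinary homomorphism count underlying the functor $T^G$, where the analogous sum over $f$ collapses to a single term, and it is the content generalizing \cite[Lemma~3.2]{Maa18}.
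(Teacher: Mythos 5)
Your proof is correct and follows essentially the same route as the paper's: an entrywise bijection between pairs $(\phi,\psi)$ of injective homomorphisms and pairs $(f,\omega)$ of a vertex overlap together with an injective homomorphism $\omega\colon K\cup_f H\to G$, with the coincidence overlap $f=\{(v,w)\mid\phi(v)=\psi(w)\}$ as the pivot. Your write-up merely makes explicit the uniqueness-of-$f$ bookkeeping that the paper compresses into the phrase ``the assignment $(\phi,\psi)\leftrightarrow(\omega,f)$ is a bijection,'' which is a welcome clarification rather than a different argument.
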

\begin{proof}
To prove item (1), we need to show for all multiindices $\iw,\jw,\iw',\jw$ that
\begin{align*}
&\#\left\{(\phi,\psi)\mathrel\Big|\begin{matrix}\phi\colon K\to G\hbox{ inj.\ hom.; }\phi(\aw)=\iw,\phi(\bw)=\jw\\\psi\colon H\to G\hbox{ inj.\ hom.; }\psi(\aw')=\iw',\psi(\bw')=\jw'\end{matrix}\right\}\\&=\sum_f\#\{\omega\colon K\cup_f H\to G\hbox{ inj.\ hom.}\mid\omega(\aw\aw')=\iw\iw',\omega(\bw\bw')=\jw\jw'\}.
\end{align*}
For every $(\phi,\psi)$, we can define a~vertex overlap $f:=\{(v,w)\mid\phi(v)=\psi(w)\}\subset V(K)\times V(H)$ and an injective homomorphism $\omega\colon K\cup_f H\to G$ by gluing $\phi$ and~$\psi$ together. Conversely, taking a~vertex overlap~$f$ and an injective homomorphism $\omega\colon K\cup_f H\to G$, we can define $\phi\colon K\to G$ and $\psi\colon H\to G$ by $\phi(v)=\omega(v)$ and $\psi(w)=\omega(w)$. We see that the assignment $(\phi,\psi)\leftrightarrow (\omega,f)$ is a~bijection. From this the equality follows.

We use the same approach to prove (2).
\begin{align*}
[\hat T_\HG^G\hat T_\KG^G]_{\jw\iw}&=\sum_\mathbf{k}\#\left\{(\phi,\psi)\Bigm|\begin{matrix}\phi\colon K\to G\hbox{ inj.\ hom.; }\phi(\aw)=\iw,\phi(\bw)=\mathbf{k}\\\psi\colon H\to G\hbox{ inj.\ hom.; }\psi(\aw')=\mathbf{k},\psi(\bw')=\jw\end{matrix}\right\}\\
&=\sum_\mathbf{k}\sum_f\#\left\{\omega\colon K\cup_f H\to G\hbox{ inj.\ hom.}\Bigm|\begin{matrix}\omega(\aw)=\iw,\omega(\bw')=\jw,\\\omega(\bw=\aw')=\mathbf{k}\end{matrix}\right\}\\
&=\sum_f\#\{\omega\colon K\cup_f H\to G\hbox{ inj.\ hom.}\mid\omega(\aw)=\iw,\omega(\dw)=\jw\}=\sum_f[T_{\HG\cdot_f\KG}^G]_{\jw\iw}.
\end{align*}

Item (3) can be seen directly from the definition of~$\hat T_\KG^G$.
\end{proof}

Let us also formulate the connection between the intertwiners $T^G_\KG$ and~$\hat T^G_\KG$ by generalizing \cite[Lemma~4.21]{Maa18}.

\begin{prop}
\label{P.Moebius}
Let $G$ be a~graph, $\KG$~a~bilabelled graph. Then
$$T^G_\KG=\sum_{\pi\in\Part(V(K))}\hat T^G_{\KG/\pi}.$$
\end{prop}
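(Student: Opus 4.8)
The plan is to compare the two sides entrywise and interpret the right-hand side as a partition of the set of (not-necessarily-injective) homomorphisms $K \to G$ according to their fibers. Fix a graph $G$ with $n := |V(G)|$ and a bilabelled graph $\KG = (K, \aw, \bw)$. The matrix entry $[T^G_\KG]_{\jw\iw}$ counts \emph{all} graph homomorphisms $\phi\colon K \to G$ with $\phi(\aw) = \iw$ and $\phi(\bw) = \jw$, whereas $[\hat T^G_{\KG/\pi}]_{\jw\iw}$ counts only the \emph{injective} homomorphisms out of the quotient $K/\pi$. So the statement reduces to the claim that every homomorphism $\phi\colon K \to G$ factors uniquely as an injective homomorphism from exactly one quotient $K/\pi$.

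First I would fix multiindices $\iw, \jw$ and set up the bijection. Given any homomorphism $\phi\colon K \to G$, define $\pi := \ker\phi$, the partition of $V(K)$ whose blocks are the fibers $\phi^{-1}(x)$ for $x \in \phi(V(K))$; equivalently $v \sim_\pi w$ iff $\phi(v) = \phi(w)$. Because $\phi$ is a graph homomorphism, it respects adjacency, so $\phi$ descends to a well-defined map $\bar\phi\colon K/\pi \to G$ on the quotient graph, and $\bar\phi$ is a homomorphism by the definition of the quotient graph $K/\pi$ (there is an edge between two blocks precisely when some representatives are adjacent in $K$). By construction $\bar\phi$ is injective on vertices, since distinct blocks of $\pi = \ker\phi$ have distinct images. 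Moreover, writing $q_\pi\colon K \to K/\pi$ for the quotient homomorphism, we have $\phi = \bar\phi \circ q_\pi$, and the labelled tuples satisfy $\bar\phi(q_\pi(\aw)) = \iw$, $\bar\phi(q_\pi(\bw)) = \jw$, which matches the input/output labelling of $\KG/\pi = (K/\pi, q_\pi(\aw), q_\pi(\bw))$.

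Next I would verify that this assignment $\phi \mapsto (\pi, \bar\phi)$ is a bijection between the set of all homomorphisms $K \to G$ with the prescribed boundary values and the disjoint union over $\pi \in \Part(V(K))$ of injective homomorphisms $K/\pi \to G$ with the corresponding boundary values. Injectivity of the assignment is clear: $\phi$ is recovered as $\bar\phi \circ q_\pi$. For surjectivity, given any $\pi$ and any injective homomorphism $\psi\colon K/\pi \to G$, the composite $\phi := \psi \circ q_\pi$ is a homomorphism with $\ker\phi = \pi$ (here injectivity of $\psi$ is exactly what guarantees $\ker\phi$ is not coarser than $\pi$, while $\phi$ being constant on blocks of $\pi$ gives the other inclusion), and it recovers $(\pi, \psi)$. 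Summing over $\pi$ and counting then gives $[T^G_\KG]_{\jw\iw} = \sum_{\pi} [\hat T^G_{\KG/\pi}]_{\jw\iw}$ for every $\iw, \jw$, which is the claimed identity.

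The only delicate point, and the step I would write out most carefully, is the compatibility of quotients with loops and with the definition $[T^G_\KG]_{\jw\iw}$ as a \emph{count} (rather than a $0/1$ indicator). When $\phi$ collapses two adjacent vertices of $K$ to a single vertex of $G$, that vertex must carry a loop in $G$, and correspondingly $K/\pi$ acquires a loop; the convention that the quotient construction ignores resulting edge multiplicities must be matched against the hom-counting. I expect this bookkeeping—ensuring $\bar\phi$ is genuinely a homomorphism $K/\pi \to G$ in all edge/loop cases and that no double-counting occurs across different $\pi$—to be the main obstacle, though it is conceptually routine once the fiber partition $\ker\phi$ is identified as the organizing principle.
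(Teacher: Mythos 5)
Your proposal is correct and takes essentially the same route as the paper's own proof: both decompose an arbitrary homomorphism $\phi\colon K\to G$ as $\iota\circ q_\pi$, where $\pi=\ker\phi$ is the fiber partition and $\iota\colon K/\pi\to G$ is injective, and verify that $(\pi,\iota)\mapsto\iota\circ q_\pi$ matches homomorphisms bijectively with pairs, giving the entrywise identity. The only cosmetic difference is that the paper phrases the two directions as a pair of inequalities rather than as an explicit bijection, and it does not dwell on the loop bookkeeping you flag (which, as you suspect, is routine).
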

\begin{proof}
Recall that
$$[T^G_\KG]_{\jw\iw}=\#\{\text{homomorphisms }\phi\colon K\to G\text{ such that $\phi(\iw)=\aw$, $\phi(\jw)=\bw$}\}.$$
Now every such homomorphism~$\phi$ can be decomposed as $\phi=\iota\circ q_\pi$, where blocks of~$\pi$ are the sets of vertices that are mapped onto a~single vertex, so $\pi=\{\phi^{-1}(v)\mid v\in V(G)\}$, and $\iota\colon K/\pi\to G$ is an injective homomorphism. This proves the inequality $[T^G_\KG]_{\jw\iw}\le\sum_\pi[\hat T^G_{\KG/\pi}]_{\jw\iw}$.

For the opposite inequality, note that considering any partition $\pi\in\Part(V(K))$ and any injective homomorphism $\iota\colon K/\pi\to G$, we can construct a~homomorphism $\iota\circ q_\pi=:\phi\colon K\to G$. Moreover, distinct pairs $(\pi,\iota)$ define distinct homomorphisms~$\phi$.
\end{proof}

\subsection{Representation category associated to a skew graph category}
The maps~$\hat T$ allow us to associate a~representation category to any skew graph category.

\begin{defn}
Let $G$ be a~graph and $\Cat$~a~skew graph category. We denote
$$\Cat^G(k,l):=\spanlin\{\hat T_\KG^G\mid \KG\in\Cat(k,l)\}\subset\Lin((\C^n)^{\otimes k},(\C^n)^{\otimes l}),$$
where $n=|V(G)|$.
\end{defn}

\begin{prop}
$\Cat^G$ is a~representation category.
\end{prop}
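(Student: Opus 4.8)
The plan is to verify that $\Cat^G$ satisfies the five axioms of a representation category by translating each axiom into a statement about the maps $\hat T^G_\KG$, and then invoking the algebraic properties established in Proposition~\ref{P.That} together with the closure properties built into the definition of a skew graph category. The key observation is that $\Cat^G(k,l)$ is by construction a linear span, so closure under tensor product, composition, and involution of the spanning maps suffices (these operations are bilinear resp.\ linear, so they extend to spans).

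First I would check axioms (1)--(3). For the tensor product, take spanning elements $\hat T^G_\KG$ and $\hat T^G_\HG$ with $\KG\in\Cat(k,l)$, $\HG\in\Cat(k',l')$. Proposition~\ref{P.That}(1) gives $\hat T^G_\KG\otimes\hat T^G_\HG=\sum_f\hat T^G_{\KG\cup_f\HG}$, and since $\Cat$ is a skew graph category it is closed under all $f$-unions, so each $\KG\cup_f\HG\in\Cat(k+k',l+l')$; hence the right-hand side lies in $\Cat^G(k+k',l+l')$. For composition, take $\KG\in\Cat(k,l)$, $\HG\in\Cat(l,m)$; Proposition~\ref{P.That}(2) gives $\hat T^G_\HG\hat T^G_\KG=\sum_f\hat T^G_{\HG\cdot_f\KG}$, and by Remark~\ref{R.skew}(c) skew graph categories are closed under $f$-compositions, so each summand lies in $\Cat^G(k,m)$. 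For the involution, Proposition~\ref{P.That}(3) gives $(\hat T^G_\KG)^*=\hat T^G_{\KG^*}$, and $\Cat$ is closed under involution by definition. In each case bilinearity/linearity extends the conclusion from spanning vectors to arbitrary elements of the spans.

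It remains to verify axioms (4) and (5), which concern the presence of specific morphisms. For axiom (4) I would use the identity morphisms $\id_p$ guaranteed by Remark~\ref{R.skew}(a): for the partition $p$ with all $k$ points in distinct blocks one has $\hat T^G_{\id_p}=1_n^{\otimes k}$, since an injective homomorphism from $k$ isolated vertices with distinct labels simply records a tuple of distinct vertices, which is exactly the identity on $(\C^n)^{\otimes k}$; more carefully, one must confirm that summing the $\hat T^G_{\id_p}$ over the appropriate partitions $p$ recovers $1_n^{\otimes k}$, so a short bookkeeping argument via Proposition~\ref{P.Moebius} (applied to $T^G_{\MG^{1,1}\otimes\cdots}=1_n^{\otimes k}$) is the cleanest route. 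For axiom (5), the pair partition $\Gpair\in\Cat(0,2)$ gives $\hat T^G_\Gpair$, whose entries count injective homomorphisms from a single vertex hitting a prescribed pair $(j_1,j_2)$; this is nonzero only when $j_1=j_2$, yielding exactly $\sum_k e_k\otimes e_k\in\Cat^G(0,2)$.

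The main obstacle I expect is axiom (4): the naive spanning vector $\hat T^G_{\MG^{1,1}}$ is not $1_n$ (the identity matrix is $T^G$, not $\hat T^G$, of the one-vertex graph), so one cannot simply read off the unit from a single diagram. Instead the identity must be assembled either as the $\hat T$-image of the correctly chosen identity partition $\id_k$ on $k$ points with all blocks singletons—which by injectivity records precisely the identity—or reconstructed through the Möbius-type relation of Proposition~\ref{P.Moebius}. Making this identification precise, and ensuring the labelling conventions align so that $\hat T^G_{\id_k}=1_n^{\otimes k}$ exactly, is the delicate point; everything else follows mechanically from Proposition~\ref{P.That} and the closure axioms of skew graph categories.
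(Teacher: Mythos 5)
Your handling of axioms (1)--(3) is precisely the paper's proof: the paper's entire argument is the one-line citation of Proposition~\ref{P.That}, with linearity silently extending closure from the spanning maps to their spans; and your axiom (5) computation is also correct, since $\hat T^G_{\Gpair}=\sum_k e_k\otimes e_k$ because injectivity is vacuous for a one-vertex graph. Note, however, that your parenthetical diagnosis at $k=1$ is off: $\hat T^G_{\MG^{1,1}}$ \emph{does} equal $1_n$, again because any map from a single vertex is automatically injective. The unit axiom is genuinely problematic only for $k\ge 2$, and the culprit is not the one-vertex graph but the failure of $\hat T^G$ to be monoidal, so that $\hat T^G_{\Gid^{\otimes k}}\neq(\hat T^G_{\Gid})^{\otimes k}$.

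The genuine error is the claim, offered again in your closing paragraph as a viable alternative, that $\hat T^G_{\id_p}=1_n^{\otimes k}$ for the all-singletons partition~$p$. This is false for $k\ge2$: the entry of $\hat T^G_{\id_p}$ at $(\jw,\iw)$ counts \emph{injective} homomorphisms from the edgeless graph on $k$ vertices into~$G$, so it vanishes whenever $\iw$ has a repeated index; this map is the orthogonal projection onto the span of the vectors $e_{i_1}\otimes\cdots\otimes e_{i_k}$ with pairwise distinct indices, not the identity. Only your second route is valid, and it should be stated as the proof: by Proposition~\ref{P.Moebius} applied to $\Gid^{\otimes k}$,
$$1_n^{\otimes k}=T^G_{\Gid^{\otimes k}}=\sum_{\pi\in\Part(k)}\hat T^G_{(\Gid^{\otimes k})/\pi}=\sum_{\pi\in\Part(k)}\hat T^G_{\id_\pi},$$
and each $\id_\pi$ lies in $\Cat(k,k)$ by Remark~\ref{R.skew}(a), so the sum lies in $\Cat^G(k,k)$. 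With this correction (and with the false alternative deleted) your argument is complete, and it is in fact more thorough than the paper's own proof, which leaves axioms (4) and (5) entirely implicit.
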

\begin{proof}
Follows from Proposition~\ref{P.That}.
\end{proof}

An important point is that this generalizes the easy case, which uses the maps~$T$ instead of~$\hat T$.
\begin{prop}
\label{P.CatGeq}
Let $G$ be a~graph and $\Cat$ a~group-theoretical category of graphs. Then
$$\Cat^G=\spanlin\{\hat T^G_\KG\mid\KG\in\Cat(k,l)\}=\spanlin\{T^G_\KG\mid\KG\in\Cat(k,l)\}.$$
\end{prop}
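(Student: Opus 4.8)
The goal is to prove Proposition~\ref{P.CatGeq}: for a group-theoretical graph category $\Cat$, the span of the maps $\hat T^G_\KG$ coincides with the span of the maps $T^G_\KG$, where $\KG$ ranges over $\Cat(k,l)$. The plan is to show mutual containment of the two spanning sets at the level of spans, using the M\"obius-type relation of Proposition~\ref{P.Moebius} as the bridge between the two families of maps.

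\textbf{The $\supset$ inclusion.} First I would fix $\KG\in\Cat(k,l)$ and apply Proposition~\ref{P.Moebius} to write
$$T^G_\KG=\sum_{\pi\in\Part(V(K))}\hat T^G_{\KG/\pi}.$$
Since $\Cat$ is a group-theoretical graph category, it is by definition closed under taking graph quotients, so each $\KG/\pi$ lies in $\Cat(k,l)$ as well. Hence every $\hat T^G_{\KG/\pi}$ appearing on the right is one of the spanning maps for $\Cat^G(k,l)$, and therefore $T^G_\KG$ is a finite linear combination of them. This shows $\spanlin\{T^G_\KG\}\subset\spanlin\{\hat T^G_\KG\}=\Cat^G$. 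This direction is immediate and is really the whole point of having arranged the quotient-closure axiom.

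\textbf{The $\subset$ inclusion.} For the reverse containment I would argue by induction on the number of vertices $|V(K)|$ of the bilabelled graph $\KG=(K,\aw,\bw)\in\Cat(k,l)$, showing that each $\hat T^G_\KG$ lies in $\spanlin\{T^G_\KG\mid\KG\in\Cat(k,l)\}$. Rearranging the M\"obius relation gives
$$\hat T^G_\KG=T^G_\KG-\sum_{\pi\neq\hat 0}\hat T^G_{\KG/\pi},$$
where $\hat 0$ denotes the discrete partition (all singletons, so $K/\hat 0=K$) and the remaining sum runs over all nontrivial partitions~$\pi$ of $V(K)$. Every such $\KG/\pi$ is again an element of $\Cat(k,l)$ by quotient-closure, and it has strictly fewer vertices than $\KG$, since a nontrivial partition identifies at least two vertices. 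By the induction hypothesis each $\hat T^G_{\KG/\pi}$ on the right lies in $\spanlin\{T^G_\KG\mid\KG\in\Cat(k,l)\}$, and $T^G_\KG$ itself is of course in that span. The base case, when $K$ has no vertices at all (or, more robustly, when $|V(K)|$ is minimal so that the only partition is $\hat 0$), is trivial because then the correction sum is empty and $\hat T^G_\KG=T^G_\KG$. This completes the induction and yields $\Cat^G=\spanlin\{\hat T^G_\KG\}\subset\spanlin\{T^G_\KG\}$.

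Combining the two inclusions gives the claimed equality of spans, and the first equality in the statement is just the definition of $\Cat^G$. The only real subtlety — and the step I would be most careful about — is ensuring that quotient-closure of $\Cat$ genuinely guarantees $\KG/\pi\in\Cat(k,l)$ for \emph{arbitrary} partitions $\pi$ of $V(K)$, including those that identify input or output vertices; this is exactly what the definition of a group-theoretical graph category provides (closure under all graph quotients), so no extra work is needed, but it is worth stating explicitly since the whole argument rests on it. The induction on vertex count is the natural device to invert the unitriangular M\"obius relation without having to write down the M\"obius function of the partition lattice explicitly.
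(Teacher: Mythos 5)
Your proof is correct and takes essentially the same approach as the paper: the inclusion $\supset$ is obtained exactly as in the paper from Proposition~\ref{P.Moebius} plus quotient-closure, and your induction on $|V(K)|$ is precisely an explicit way of carrying out the paper's inversion of the triangular (unit-diagonal in the vertex-count order) change of coefficients between the families $(T^G_\KG)$ and $(\hat T^G_\KG)$. If anything, the induction is the more careful formulation, since it sidesteps any concern about inverting a possibly infinite coefficient matrix.
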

\begin{proof}
Recall that a~group-theoretical category of graphs is by definition closed under graph quotients. Therefore, the inclusion $\supset$ directly follows from Proposition~\ref{P.Moebius}. The matrix of coefficients of $(T^G_\KG)_{\KG\in\Cat(k,l)}$ expressed as linear combinations of $(\hat T^G_\KG)_{\KG\in\Cat(k,l)}$ is triangular with non-zero diagonal. So, the transformation can be inverted, which provides a proof for the opposite inclusion $\subset$.
\end{proof}

In the following, we are going to find a~linear basis for the morphism spaces $\Cat^G(k,l)$ (compare with \cite[Lemma~3.4]{Maa18}).

\begin{defn}
\label{D.WG}
Let $G$ be a~graph. We denote by $W_G(k):=V(G)^k/\Aut G$ the set of equivalence classes of words~$\aw$ over~$V(G)$ of length $k\in\N_0$ up to automorphisms of~$G$. We also denote $W_G(k,l):=V(G)^k\times V(G)^l/\Aut G$. For given $\aw,\bw$, we denote by $[\aw]\in W_G(k)$ and $[\aw,\bw]\in W_G(k,l)$ the corresponding equivalence classes. For a~skew category~$\Cat$, we denote $W_G^\Cat(k,l):=\{[\aw,\bw]\in W_G(k,l)\mid (G,\aw,\bw)\in\Cat(k,l)\}$.
\end{defn}

\begin{defn}
\label{D.xiG}
Let $G$ be a~graph. We denote $\xi_G^\aw:=\hat T_{(G,\emptyset,\aw)}^G$ for every $\aw\in V(G)^*$. That is,
$$[\xi_G^\aw]_\iw=\#\{\hbox{automorphisms }\phi\colon G\to G\mid\phi(\aw)=\iw\}.$$
Similarly, we define $\hat T_G^{\aw\bw}:=\hat T_{(G,\aw,\bw)}^G$ for every $\aw,\bw\in V(G)^*$.
\end{defn}

We obviously have $\xi_G^\aw=\xi_G^{\aw'}$ if $\aw'=\phi(\aw)$ for some automorphism $\phi\colon G\to G$. Similar equality holds for $\hat T_G^{\aw\bw}$. So, these tensors can be assigned to the whole equivalence classes $[\aw]\in W_G(k)$, resp.\ $[\aw,\bw]\in W_G(k,l)$.

\begin{lem}
\label{L.ThatLI}
Let $G$ be a~graph. The set $\{\hat T^{\aw\bw}_G\mid [\aw,\bw]\in W_G(k,l)\}$ is linearly independent for every $k,l\in\N_0$.
\end{lem}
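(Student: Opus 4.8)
The plan is to show linear independence of the tensors $\{\hat T^{\aw\bw}_G\mid[\aw,\bw]\in W_G(k,l)\}$ by exhibiting a family of matrix entries that diagonalizes the system. Recall that by Definition~\ref{D.xiG} we have
$$[\hat T^{\aw\bw}_G]_{\jw\iw}=\#\{\text{automorphisms }\phi\colon G\to G\mid\phi(\aw)=\iw,\;\phi(\bw)=\jw\}.$$
The key observation is that a representative $(\iw,\jw)$ of a class $[\aw,\bw]$ is itself a pair of multiindices over $V(G)$ and can be plugged into the entries of \emph{every} $\hat T^{\cw\dw}_G$. So I would evaluate each tensor at the index $(\iw,\jw)=(\aw,\bw)$.

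First I would suppose a vanishing linear combination $\sum_{[\cw,\dw]}\lambda_{[\cw,\dw]}\hat T^{\cw\dw}_G=0$, where the sum runs over $W_G^{\phantom{C}}(k,l)$, and evaluate the $(\jw\iw)$-entry at $(\iw,\jw)=(\aw,\bw)$ for a fixed class $[\aw,\bw]$. The crucial claim is that $[\hat T^{\cw\dw}_G]_{\bw\aw}\neq 0$ forces $[\cw,\dw]=[\aw,\bw]$. Indeed, if there is an automorphism $\phi$ with $\phi(\cw)=\aw$ and $\phi(\dw)=\bw$, then $\phi$ carries the pair $(\cw,\dw)$ to $(\aw,\bw)$, which is precisely the statement that $(\cw,\dw)$ and $(\aw,\bw)$ lie in the same $\Aut G$-orbit, i.e. $[\cw,\dw]=[\aw,\bw]$. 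Conversely, when $[\cw,\dw]=[\aw,\bw]$ the entry counts at least the identity-type witness and so is a positive integer.

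Therefore the matrix with rows and columns indexed by $W_G(k,l)$ and entries $M_{[\aw,\bw],[\cw,\dw]}:=[\hat T^{\cw\dw}_G]_{\bw\aw}$ is ``diagonal'' in the sense that its only nonzero entries occur on the diagonal $[\cw,\dw]=[\aw,\bw]$, where the value is a strictly positive integer. Plugging the vanishing combination into this evaluation gives $\lambda_{[\aw,\bw]}\cdot M_{[\aw,\bw],[\aw,\bw]}=0$, and since the diagonal entry is nonzero we conclude $\lambda_{[\aw,\bw]}=0$ for every class. This yields linear independence.

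The main subtlety to verify carefully is that the evaluation point $(\aw,\bw)$ is well defined at the level of equivalence classes and that counting automorphisms respects the orbit structure; concretely, one must check that $[\hat T^{\cw\dw}_G]_{\bw\aw}$ is unchanged if we replace the representative $(\aw,\bw)$ by $(\phi(\aw),\phi(\bw))$ and $(\cw,\dw)$ by an automorphic image, which follows because precomposing or postcomposing the counted automorphisms by a fixed automorphism is a bijection on the counted set. I expect no real obstacle here beyond this bookkeeping, since the diagonalization is forced directly by the definition of the $\Aut G$-orbits.
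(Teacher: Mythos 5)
Your proof is correct and takes essentially the same approach as the paper: the paper's one-line argument rests on exactly the fact you isolate, namely that $[\hat T^{\cw\dw}_G]_{\bw\aw}\neq 0$ forces $[\cw,\dw]=[\aw,\bw]$, i.e.\ distinct classes have pairwise disjoint supports. Evaluating a vanishing linear combination at the index $(\aw,\bw)$, where the diagonal entry counts at least the identity automorphism, is precisely how that disjointness yields linear independence.
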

\begin{proof}
Follows from the fact that $[\hat T^{\aw\bw}_G]_{\jw\iw}[\hat T^{\aw'\bw'}_G]_{\jw\iw}=0$ unless $[\aw,\bw]=[\aw',\bw']$ in $W_G(k,l)$.
\end{proof}

\begin{prop}
\label{P.ThatLB}
Let $G$ be a~graph, $\Cat$ a~skew graph category such that $(G,\emptyset,\emptyset)\in\Cat$. The vector space $\Cat^G(k,l)$ has a~linear basis $\{\hat T^{\aw\bw}_G\mid [\aw,\bw]\in W_G^\Cat(k,l)\}$.
\end{prop}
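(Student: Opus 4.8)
The plan is to verify the two defining properties of a basis separately: that the proposed family spans $\Cat^G(k,l)$ and that it is linearly independent. Linear independence comes essentially for free, since $\{\hat T_G^{\aw\bw}\mid [\aw,\bw]\in W_G^\Cat(k,l)\}$ is a subfamily of $\{\hat T_G^{\aw\bw}\mid[\aw,\bw]\in W_G(k,l)\}$, which is linearly independent by Lemma~\ref{L.ThatLI}. Moreover each such element genuinely lies in $\Cat^G(k,l)$: by definition $[\aw,\bw]\in W_G^\Cat(k,l)$ means precisely that $(G,\aw,\bw)\in\Cat(k,l)$, so $\hat T_G^{\aw\bw}=\hat T_{(G,\aw,\bw)}^G$ is one of the spanning generators of $\Cat^G(k,l)$.

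The real content is the spanning claim. By definition $\Cat^G(k,l)$ is spanned by the tensors $\hat T_\KG^G$ for $\KG=(K,\aw,\bw)\in\Cat(k,l)$, so it suffices to express each such $\hat T_\KG^G$ in terms of the proposed vectors. The key identity I would establish is
\begin{equation*}
\hat T_{(K,\aw,\bw)}^G=\frac{1}{|\Aut G|}\sum_{\iota}\hat T_{(G,\iota(\aw),\iota(\bw))}^G,
\end{equation*}
where the sum runs over all embeddings (injective homomorphisms) $\iota\colon K\to G$. I would prove this entrywise by an orbit-counting argument: the $(\jw,\iw)$ entry of the right-hand side counts pairs $(\iota,\psi)$ with $\iota\colon K\to G$ an embedding and $\psi\in\Aut G$ such that $(\psi\circ\iota)(\aw)=\iw$ and $(\psi\circ\iota)(\bw)=\jw$. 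Substituting $\phi=\psi\circ\iota$, which is again an embedding, and noting that for each embedding $\phi$ there are exactly $|\Aut G|$ such pairs $(\iota,\psi)$ (one for each $\psi\in\Aut G$, with $\iota=\psi^{-1}\circ\phi$), this count equals $|\Aut G|$ times the number of embeddings $\phi\colon K\to G$ with $\phi(\aw)=\iw$, $\phi(\bw)=\jw$, i.e.\ $|\Aut G|\cdot[\hat T_{(K,\aw,\bw)}^G]_{\jw\iw}$. When $K$ does not embed into $G$ the sum is empty and both sides vanish, in agreement with Remark~\ref{R.Thatzero}.

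It then remains to check that every term on the right-hand side is one of the proposed basis vectors, i.e.\ that $(G,\iota(\aw),\iota(\bw))\in\Cat(k,l)$ for each embedding $\iota$. Here I would invoke the hypothesis $(G,\emptyset,\emptyset)\in\Cat$ together with closure under $f$-unions. Given $\iota\colon K\to G$, take the vertex overlap $f_\iota=\{(v,\iota(v))\mid v\in V(K)\}$ between $K$ and $G$. Since $\iota$ is injective, every vertex of $K$ is identified with a vertex of $G$, and since $\iota$ is a homomorphism this identification creates no new edges; hence $K\cup_{f_\iota}G=G$, with the inclusion $f_K$ equal to $\iota$. Consequently $\KG\cup_{f_\iota}(G,\emptyset,\emptyset)=(G,\iota(\aw),\iota(\bw))$ lies in $\Cat$, so that $[\iota(\aw),\iota(\bw)]\in W_G^\Cat(k,l)$, exactly as needed.

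I expect the main obstacle to be the bookkeeping in the two counting steps: getting the $|\Aut G|$ normalisation right in the orbit-counting identity, and carefully justifying the identification $K\cup_{f_\iota}G=G$ together with the fact that $f_K$ corresponds to $\iota$, so that the labels $\aw,\bw$ really transform into $\iota(\aw),\iota(\bw)$. Once these are settled, combining the displayed identity with the membership $(G,\iota(\aw),\iota(\bw))\in\Cat$ exhibits each generator $\hat T_\KG^G$ as a linear combination of the proposed vectors, which completes the spanning argument and hence the proof.
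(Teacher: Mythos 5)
Your proof is correct and takes essentially the same route as the paper: linear independence via Lemma~\ref{L.ThatLI}, and spanning via the identity $\left|\Aut G\right|\,\hat T^G_{(K,\aw,\bw)}=\sum_{\iota}\hat T^{\iota(\aw)\iota(\bw)}_G$ over embeddings $\iota\colon K\to G$, with membership of each term in $\Cat$ secured by forming the $f$-union with $(G,\emptyset,\emptyset)$. The only (cosmetic) difference is that the paper obtains this identity by tensoring with the scalar $\hat T^G_{(G,\emptyset,\emptyset)}=\left|\Aut G\right|$ and citing Proposition~\ref{P.That}(1) together with Remark~\ref{R.Thatzero}, whereas you re-derive the same counting bijection directly entrywise.
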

\begin{proof}
The linear independence was proven in Lemma \ref{L.ThatLI}. Obviously, we have the inclusion. It remains to prove that the set generates indeed the whole space. To prove that, take arbitrary $\HG\in\Cat(k,l)$. Using Proposition \ref{P.That}, we have
$$\alpha\hat T_\HG=\hat T_\HG\otimes\hat T_{(G,\emptyset,\emptyset)}=\sum_f\hat T^G_{\HG\cup_f(G,\emptyset,\emptyset)}=\sum_{\substack{\phi\colon H\to G\\\text{ inj.\ hom.}}}\hat T^{\phi(\aw)\phi(\bw)}_G,$$
where $\alpha=\hat T_{(G,\emptyset,\emptyset)}=\left|\Aut G\right|\neq 0$. For the last equality, recall Remark \ref{R.Thatzero}. We have $\hat T_{\HG\cup_f(G,\emptyset,\emptyset)}=0$ unless $f$ identifies {\em all} the vertices of~$H$ with some vertices of~$G$. That is, $f=\{(v,\phi(v))\mid v\in H\}$, where $\phi$ is some embedding $H\to G$. Then, of course, $H\cup_f G=G$.
\end{proof}

Now, we make the result more general getting rid of the assumption $(G,\emptyset,\emptyset)\in\Cat$ (which is equivalent to $G\in F$ for the associated graph fibration~$F$).

\begin{lem}
\label{L.greatestK}
Let $F$ be a~set of graphs closed under arbitrary $f$-unions. Then, for every graph~$G$, there exists a~greatest subgraph~$K$ of~$G$ contained in~$F$. That is, there is $K\in F$ such that $K\subset G$ and, for every $G\supset H\in F$, we have $H\subset K$.

Moreover, the embedding $K\to G$ is determined uniquely up to automorphisms of~$K$.
\end{lem}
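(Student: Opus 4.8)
The plan is to build $K$ as the union, taken inside $G$, of all subgraphs of $G$ that belong to $F$. Fix a concrete vertex set $V(G)$ for $G$ and let $\mathcal S$ be the collection of all subgraphs $H\subseteq G$---that is, of all pairs consisting of a subset $V(H)\subseteq V(G)$ and a subset $E(H)$ of the edges of $G$ between those vertices---whose isomorphism class lies in $F$. Since $G$ is finite, $\mathcal S$ is finite, and it is non-empty because $N_0\in F$ is a subgraph of every graph. I would then define $K$ to be the subgraph of $G$ with vertex set $\bigcup_{H\in\mathcal S}V(H)$ and edge set $\bigcup_{H\in\mathcal S}E(H)$.

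The first key step is to check $K\in F$. Given two members $H_1,H_2\in\mathcal S$, let $f\subseteq V(H_1)\times V(H_2)$ be the vertex overlap $f=\{(v,v)\mid v\in V(H_1)\cap V(H_2)\}$ induced by the shared vertices of $G$. Then $H_1\cup_f H_2$ is exactly the subgraph of $G$ with vertices $V(H_1)\cup V(H_2)$ and edges $E(H_1)\cup E(H_2)$, so, $F$ being closed under $f$-unions, this union again lies in $F$. Iterating over the finitely many elements of $\mathcal S$ writes $K$ as a finite $f$-union of members of $F$; hence $K\in F$. By construction $H\subseteq K$ for every $H\in\mathcal S$, and any abstract $H\in F$ that embeds into $G$ has image a subgraph in $\mathcal S$, hence contained in $K$. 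Thus $K$ is the greatest subgraph of $G$ lying in $F$.

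For the uniqueness clause, I would show that every embedding $\iota\colon K\to G$ has image equal to the fixed subgraph $K\subseteq G$. Indeed $\iota(K)$---the subgraph with vertices $\iota(V(K))$ and edges $\{\iota(e)\mid e\in E(K)\}$---is isomorphic to $K$, so it lies in $\mathcal S$ and therefore $\iota(K)\subseteq K$; but $\iota$ is injective on vertices and edges, so $\iota(K)$ has the same number of vertices and edges as $K$, forcing $\iota(K)=K$. Consequently $\iota$ factors as $\iota=\iota_0\circ\alpha$, where $\iota_0\colon K\hookrightarrow G$ is the inclusion of the fixed subgraph and $\alpha\in\Aut K$; this is precisely uniqueness up to automorphisms of $K$.

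The point needing the most care is the translation between the abstract isomorphism classes making up $F$ and the literal subgraphs of $G$ on which unions and overlaps are performed: one has to verify that gluing two literal subgraphs along their common vertices of $G$ coincides with the categorical $f$-union, so that closure of $F$ under $f$-unions applies verbatim. Finiteness of $G$ is what legitimises forming the union of the whole family $\mathcal S$ at once, while the elementary counting argument---a subgraph sitting inside another of the same vertex and edge count must equal it---is exactly what upgrades maximality of $K$ to the uniqueness of its embedding into $G$.
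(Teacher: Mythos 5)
Your proof is correct and rests on the same key observation as the paper's: the union of subgraphs of $G$ lying in $F$, taken inside $G$, is an $f$-union along shared vertices and hence again lies in $F$. The paper phrases this as a contradiction argument (two distinct maximal subgraphs, or two embeddings with distinct images, would yield a strictly larger union), whereas you build $K$ directly as the union of \emph{all} such subgraphs and settle uniqueness by a vertex/edge counting argument --- a purely presentational difference.
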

\begin{proof}
Suppose there were two different maximal graphs $K_1,K_2\in F$ contained in~$G$ and denote by $\iota_1,\iota_2$ the corresponding embeddings. Then their union inside~$G$, that is, $K:=\iota_1(K_1)\cup\iota_2(K_2)$ would also satisfy the assumptions, but it would be larger than both $K_1$ and~$K_2$, which is a~contradiction.

The second part is proven the same way. If there were two embeddings $\iota_1,\iota_2\colon K\to G$ such that $\iota_1(K)\neq\iota_2(K)$, then the union of their images would define a~greater subgraph of~$G$ contained in~$F$, which is a~contradiction.
\end{proof}

\begin{rem}
\label{R.Kvert}
If $F$ contains the single vertex graph $N_1$, then the number of vertices in $K$ coincides with the number of vertices in $G$. Indeed, if $K$ had fewer vertices, then $K\sqcup N_1$ would still be a subgraph of $G$ contained in $F$.
\end{rem}

\begin{prop}
\label{P.ThatLB2}
Let $G$ be a~graph, $\Cat$ a~skew graph category corresponding to a~graph fibration~$F$. Denote by~$K$ the greatest subgraph of~$G$ contained in~$F$. Then the vector space $\Cat^G(k,l)$ has a~linear basis $\{\hat T^G_{(K,\aw,\bw)}\mid [\aw,\bw]\in W_K^\Cat(k,l)\}$.
\end{prop}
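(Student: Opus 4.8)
The plan is to run the proof of Proposition~\ref{P.ThatLB} essentially verbatim, with the maximal fibre~$K$ taking over the role that~$G$ played there; indeed, when $G\in F$ one has $K=G$ and the two statements coincide, so this is the natural generalization. Throughout I fix $K$ as a genuine subgraph of~$G$ via an inclusion $\iota\colon K\to G$. By Lemma~\ref{L.greatestK} this inclusion is unique up to precomposition with $\Aut K$; in particular every embedding $K\to G$ has image exactly~$K$, so there are precisely $|\Aut K|$ of them. There are three things to check: linear independence, containment in $\Cat^G(k,l)$, and spanning.

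For linear independence I would argue as in Lemma~\ref{L.ThatLI}. The vector $\hat T^G_{(K,\aw,\bw)}$ is supported exactly on the pairs $(\iota\alpha(\aw),\iota\alpha(\bw))$ with $\alpha\in\Aut K$, and its support is nonempty since the pair coming from $\alpha=\id$ occurs. Because $\iota$ is injective, an overlap $(\iota\alpha(\aw),\iota\alpha(\bw))=(\iota\beta(\aw'),\iota\beta(\bw'))$ forces $\alpha(\aw)=\beta(\aw')$ and $\alpha(\bw)=\beta(\bw')$, hence $[\aw',\bw']=[\aw,\bw]$ in $W_K(k,l)$ via $\beta^{-1}\alpha\in\Aut K$. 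Thus distinct classes have disjoint supports, which yields linear independence. Containment is immediate: for $[\aw,\bw]\in W_K^\Cat(k,l)$ we have $(K,\aw,\bw)\in\Cat$ by the very definition of $W_K^\Cat$, so $\hat T^G_{(K,\aw,\bw)}\in\Cat^G(k,l)$.

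The spanning step is the heart of the argument. Since $K\in F$ and $F(K)$ is a (nonempty) normal subgroup it contains~$e$, so $(K,\emptyset,\emptyset)\in\Cat$ and $\hat T^G_{(K,\emptyset,\emptyset)}=|\Aut K|=:\alpha\neq 0$ is a nonzero scalar. Take an arbitrary $\HG=(H,\aw,\bw)\in\Cat(k,l)$. By Remark~\ref{R.Thatzero} we may assume $H$ embeds into~$G$, since otherwise $\hat T^G_\HG=0$; and then $H\in F$ together with maximality of~$K$ gives $H\subset K$. Using Proposition~\ref{P.That}(1),
$$\alpha\,\hat T^G_\HG=\hat T^G_\HG\otimes\hat T^G_{(K,\emptyset,\emptyset)}=\sum_f\hat T^G_{\HG\cup_f(K,\emptyset,\emptyset)},$$
the sum running over vertex overlaps~$f$ of $H$ and~$K$. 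The crucial observation is that the $f$-th term vanishes unless $f$ is a total embedding $H\to K$: by axiom~(F3) we have $H\cup_f K\in F$, and if $H\cup_f K$ embeds into~$G$ then its image is a fibre of~$F$ contained in~$G$ and containing the copy of~$K$, hence equal to~$K$ by maximality; this can only happen when all vertices and edges of~$H$ are already absorbed into~$K$, i.e.\ when $f\colon H\to K$ is an embedding. For such~$f$ one has $\HG\cup_f(K,\emptyset,\emptyset)=(K,f(\aw),f(\bw))$, which again lies in~$\Cat$ as an $f$-union of two category elements, so $[f(\aw),f(\bw)]\in W_K^\Cat(k,l)$. Dividing by~$\alpha$ therefore expresses $\hat T^G_\HG$ as a linear combination of the proposed basis vectors, completing the spanning part.

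The main obstacle I expect is precisely this vanishing analysis of the $f$-union terms, which forces one to use three inputs at once: Remark~\ref{R.Thatzero} (the map vanishes unless the underlying graph embeds in~$G$), the closure of~$F$ under $f$-unions, and the maximality from Lemma~\ref{L.greatestK}. The subtle point is to argue cleanly about the \emph{image} of $H\cup_f K$ inside~$G$ rather than the abstract $f$-union, so that the maximality of~$K$ can be applied; conflating the two is the easiest place to slip.
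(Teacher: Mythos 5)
Your proof is correct and follows essentially the same route as the paper's: linear independence via the disjoint-support argument of Lemma~\ref{L.ThatLI}, and spanning by tensoring with the scalar $\hat T^G_{(K,\emptyset,\emptyset)}$ and showing via Remark~\ref{R.Thatzero}, axiom~(F3), and the maximality/uniqueness statements of Lemma~\ref{L.greatestK} that only the $f$-unions coming from embeddings $H\to K$ survive. Your write-up is in fact somewhat more careful than the paper's (e.g.\ in distinguishing the abstract $f$-union from its image in $G$, and in checking that the surviving terms lie in $W_K^\Cat(k,l)$), but the underlying argument is identical.
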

\begin{proof}
Again, we have that $T^G_{(K,\aw,\bw)}$ is well defined as it does not depend on the particular representative of $[\aw,\bw]$. The proof of linear independence is the same as in Lemma~\ref{L.ThatLI}. To prove that the set generates $\Cat^G(k,l)$, take arbitrary $\HG=(H,\aw,\bw)\in\Cat(k,l)$. Then
$$\alpha\hat T^G_\HG=\hat T^G_\HG\otimes\hat T_{(K,\emptyset,\emptyset)}=\sum_f\hat T^G_{(H\cup_f K,\aw,\bw)}=\sum_{\substack{\phi\colon H\to K\\\text{ inj.\ hom.}}}\hat T^G_{(K,\aw,\bw)},$$
where $\alpha\neq 0$ is the number of embeddings $K\to G$. Let us again explain the last equality. In the sum on the left hand side, we have either $H\cup_fK\subset G$, so, from maximality of~$K$, we must have $H\cup_fK=K$, or we have $H\cup_fK\not\subset G$ and hence $T^G_{(H\cup_f K,\aw,\bw)}=0$.
\end{proof}

\subsection{Quantum group associated to a skew graph category}
Now we are about to formulate the main result of this article interpreting the skew graph categories in terms of quantum groups. We start with a simplified version of the result.

\begin{prop}
\label{P.G}
Let $F$ be a~set of graphs containing $N_0$ and $N_1$ and closed under arbitrary $f$-unions. Setting $F(H)=\Z_2^{*V(H)}$ for all $H\in F$ turns it into a~graph fibration corresponding to a~skew category $\Cat(k,l)=\{(H,\aw,\bw)\mid H\in F,\ppen\aw\in V(K)^k,\ppen\bw\in V(K)^l\}$. Let $G$ be a graph and denote by $K$ the greatest subgraph of $G$ contained in $F$. Then the quantum group corresponding to the category~$\Cat^G$ is the group $\Aut K$.
\end{prop}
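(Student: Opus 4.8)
The plan is to show that $\Cat^G$ coincides, space by space, with the intertwiner category $\RCat_{\Aut K}$ of the permutation group $\Aut K$, and then to invoke the uniqueness in Woronowicz--Tannaka--Krein. Since here $F(K)=\Z_2^{*V(K)}$ is the whole group, the quotient $\Gamma=\Z_2^{*V(K)}/F(K)$ is trivial, so the answer $\Aut K$ is precisely the degenerate case $\hat\Gamma\rtimes\Aut K=\Aut K$. First I would fix an identification $V(G)=\{1,\dots,n\}$ with $n=|V(G)|$; by Remark~\ref{R.Kvert} we also have $|V(K)|=n$, and fixing the embedding $K\hookrightarrow G$ (unique up to $\Aut K$ by Lemma~\ref{L.greatestK}) realizes $\Aut K$ as a subgroup of $S_n$ acting on $\C^n$ by permutation matrices. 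Thus both $\Cat^G(k,l)$ and $\RCat_{\Aut K}(k,l)$ sit inside the same ambient space $\Lin((\C^n)^{\otimes k},(\C^n)^{\otimes l})$.

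Because $F(K)$ is the full group, every pair of words over $V(K)$ labels a bilabelled graph in $\Cat$, so $W_K^\Cat(k,l)=W_K(k,l)$ and Proposition~\ref{P.ThatLB2} provides the explicit basis $\{\hat T^G_{(K,\aw,\bw)}\mid[\aw,\bw]\in W_K(k,l)\}$ of $\Cat^G(k,l)$. The step I expect to be the main obstacle is the following consequence of the maximality of~$K$: under the fixed identification, every injective homomorphism $\phi\colon K\to G$ is an automorphism of~$K$. Indeed, such a $\phi$ is a bijection on vertices, its image $\phi(K)\subseteq G$ is isomorphic to $K$ and hence belongs to $F$, so maximality forces $\phi(K)\subseteq K$; as $\phi(K)$ and $K$ have equally many edges we get $\phi(K)=K$, and a vertex-bijective homomorphism with image $K$ is an automorphism of $K$.

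Granting this, the entries of the basis vectors become
$$[\hat T^G_{(K,\aw,\bw)}]_{\jw\iw}=\#\{\psi\in\Aut K\mid\psi(\aw)=\iw,\ \psi(\bw)=\jw\},$$
which equals the positive constant $|\mathrm{Stab}_{\Aut K}(\aw,\bw)|$ whenever $(\iw,\jw)$ lies in the $\Aut K$-orbit of $(\aw,\bw)$ and vanishes otherwise. Hence each $\hat T^G_{(K,\aw,\bw)}$ is a nonzero scalar multiple of the indicator matrix of a single $\Aut K$-orbit in $V(K)^k\times V(K)^l$. By the classical description of permutation-group intertwiners -- equivalently, via Frobenius reciprocity $\Mor(v^{\otimes k},v^{\otimes l})\cong\Mor(1,v^{\otimes(k+l)})$ for the permutation representation $v$, together with the fact that orbit indicators span the space of invariant vectors -- these orbit-indicator matrices form a basis of $\RCat_{\Aut K}(k,l)$. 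Therefore $\Cat^G(k,l)=\RCat_{\Aut K}(k,l)$ for every $k,l$.

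Finally, $\Aut K$ is an orthogonal compact matrix group, so it is an orthogonal compact matrix quantum group whose intertwiner category is exactly $\RCat_{\Aut K}=\Cat^G$. By the uniqueness in the Woronowicz--Tannaka--Krein theorem, $\Aut K$ is then the unique orthogonal compact matrix quantum group associated to $\Cat^G$, which is the assertion.
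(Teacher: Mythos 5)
Your proof is correct, but it takes a genuinely different route than the paper for the crucial (harder) half of the argument. Both proofs share the same setup (identifying $V(K)$ with $V(G)$ via Remark~\ref{R.Kvert} and Lemma~\ref{L.greatestK}) and both rest on the basis $\{\hat T^G_{(K,\aw,\bw)}\}$ from Proposition~\ref{P.ThatLB2}; also, your ``main obstacle'' step -- every injective homomorphism $K\to G$ is an automorphism of $K$ -- is exactly the maximality argument the paper leaves implicit here and invokes via Lemma~\ref{L.greatestK} in Theorem~\ref{T.G}, so spelling it out is a genuine improvement. The mechanisms diverge after that. The paper first notes $\Part\subset\Cat$, so the associated quantum group $\GGr$ is a subgroup of $S_n$, i.e.\ a genuine permutation group, and then determines its \emph{points}: the inclusion $\Aut K\subset\GGr$ by checking the relations on the basis vectors, and the inclusion $\GGr\subset\Aut K$ by evaluating a single cleverly chosen intertwiner $\xi^\vw$, $\vw=(\lv_1,\dots,\lv_n)$, at the entry $\vw$, which forces any admissible permutation to be an injective homomorphism $K\to G$, hence an automorphism of $K$. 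You instead compute both intertwiner categories in full: each $\hat T^G_{(K,\aw,\bw)}$ is a nonzero multiple of an $\Aut K$-orbit indicator, and you then invoke the classical centralizer-algebra fact that such orbit indicators span $\Mor(v^{\otimes k},v^{\otimes l})$ for a permutation group, concluding by Tannaka--Krein uniqueness. Two remarks on what each approach buys. The paper's route is self-contained within the article: indeed, the paper later \emph{derives} the classical orbit description (Proposition~\ref{P.intsp}) by recycling this very proof, so your argument inverts the paper's logical order -- harmless, since the centralizer fact is classical and independent of the paper, but worth noting that inside this paper's architecture you are presupposing what Proposition~\ref{P.intsp} is designed to establish. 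Conversely, your route yields slightly more information at once: it exhibits the complete intertwiner spaces of the resulting group, rather than only identifying which permutations satisfy the relations.
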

\begin{proof}
Let us denote the associated (quantum) group by~$\GGr$. In order to make sense of the statement, we need to fix the inclusion $\iota\colon K\to G$ and identify the vertex sets $V:=V(K)=V(G)$ (recall from Remark~\ref{R.Kvert} that $|V(K)|=|V(G)|$) so that $\Aut K\subset S_V=S_{V(G)}$. First of all, note that $\Cat$ contains all partitions $\Part\subset\Cat$. Consequently, the associated quantum group is a~subgroup of the symmetric group $\GGr\subset S_V$.

First, we prove that $\GGr\supset\Aut K$. So, take an element $\sigma\in\Aut K$ and prove that the associated permutation matrix $A_\sigma$ (with entries $[A_\sigma]_{ij}=\delta_{i\sigma(j)}$) satisfies all the intertwiner relations coming from $\Cat^G$. Thanks to the Frobenius reciprocity, it is enough to consider the intertwiners $\xi\in\Cat^G(0,k)$. According to Proposition~\ref{P.ThatLB2}, $\Cat^G(0,k)$ has a linear basis of elements $\xi^\aw:=\hat T^G_{(K,\emptyset,\aw)}$. So, we need to show that $A_\sigma^{\otimes k}\xi^\aw=\xi^\aw$ for every $\aw\in V(K)^k$ and every $k\in\N_0$. This relation means
$$[\xi^\aw]_\iw=[A_\sigma^{\otimes k}\xi^\aw]_\iw=[\xi^\aw]_{\sigma^{-1}(\iw)},$$
that is,
\begin{align*}
&\#\{\text{inj.\ hom.\ }\phi\colon K\to G\text{ such that }\phi(\aw)=\iw\}\\
&=\#\{\text{inj.\ hom.\ }\phi\colon K\to G\text{ such that }\sigma(\phi(\aw))=\iw\}.
\end{align*}
This is surely satisfied by every $\sigma\in\Aut K$.

For the converse inclusion $\GGr\subset\Aut K$, we need to take any $\sigma\in S_{V(K)}$ satisfying the intertwiner relations of $\Cat^G$ and show that actually $\sigma\in\Aut K$. We do this by a smart choice for the intetwiner: we take $\vw:=(\lv_1,\dots,\lv_n)$, where $\lv_1,\dots,\lv_n$ is the list of all the vertices in $V$.
\begin{align*}
	1=[\xi^\vw]_\vw&=[A_\sigma\xi^\vw]_\vw=[\xi^\vw]_{\sigma^{-1}(\vw)}\cr&=
\begin{cases}
1&\text{if $\sigma^{-1}$ is an injective homomorphism $K\to G$}\cr
0&\text{otherwise}
\end{cases}
\end{align*}
\end{proof}

\begin{thm}
\label{T.G}
Let $F$ be a~graph fibration and $\Cat$~the corresponding category. Let $G$ be a~graph, let $K$ be the greatest subgraph of~$G$ contained in~$F$. Then the quantum group corresponding to~$\Cat^G$ is
$$\GGr=\hat\Gamma\rtimes\Aut K,$$
where $\Gamma=\Z_2^{*V(K)}/F(K)$.
\end{thm}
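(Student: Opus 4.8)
The plan is to combine the strategies of Theorem~\ref{T.Geasy} (the easy case) and Proposition~\ref{P.G} (the full case), with the maximality of~$K$ serving as the bridge between them. First I would fix the embedding $\iota\colon K\to G$ and, using Remark~\ref{R.Kvert} so that $|V(K)|=|V(G)|=:n$, identify $V(K)=V(G)=\{1,\dots,n\}$, so that $\Aut K\subset S_n$. Since every skew graph category contains~$\Ggrpth$ (Remark~\ref{R.skew2}) and since the underlying graph of $\Ggrpth$ is a single vertex, injectivity is vacuous and $\hat T^G_{\Ggrpth}$ coincides with the usual group-theoretical intertwiner. The associated relation therefore forces $\GGr\subset\widehat{\Z_2^{*n}}\rtimes S_n$ and lets me write $u=\gamma v$ with $v_{ij}=u_{ij}^2$ central projections and $\gamma_i=\sum_j u_{ij}$, exactly as in the proof of Theorem~\ref{T.Geasy}.

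The step I expect to be the crux is the following observation: because $K$ is the greatest subgraph of~$G$ lying in~$F$ and $|V(K)|=|V(G)|$, every injective homomorphism $\phi\colon K\to G$ is in fact an automorphism of~$K$. Indeed, $\phi(K)\cong K$ is a subgraph of~$G$ isomorphic to~$K$, hence lies in~$F$; by maximality $\phi(K)\subset K$, and equality of vertex and edge counts forces $\phi(K)=K$. Consequently the basis vectors $\xi^\aw_K:=\hat T^G_{(K,\emptyset,\aw)}$ from Proposition~\ref{P.ThatLB2} count \emph{automorphisms} of~$K$ sending $\aw\mapsto\iw$, and a direct index computation (using that $\sigma\circ\phi\in\Aut K$ for $\sigma,\phi\in\Aut K$) shows each $\xi^\aw_K$ is $\Aut K$-invariant. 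This is precisely what repairs the potential failure $\Aut K\not\subset\Aut G$; without it the naive invariance argument breaks.

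For the inclusion $\GGr\subset\hat\Gamma\rtimes\Aut K$ I would verify that the defining relations of $\hat\Gamma\rtimes\Aut K$ hold for $u=\gamma v$. For the $\Aut K$-part, taking $\tilde\KG=(K,\emptyset,\tilde\vw)$ with $\tilde\vw=(1,1,\dots,n,n)$ (so $g_{\tilde\vw}=e\in F(K)$ and $\tilde\KG\in\Cat$) and applying the fork/merge trick $\tilde\KG=\Gfork^{\otimes n}\cdot(K,\emptyset,\vw)$, where $\vw=(1,\dots,n)$, exactly as in Theorem~\ref{T.Geasy}, yields $v^{\otimes n}\xi^\vw_K=\xi^\vw_K$; substituting a permutation matrix $A_\sigma$ and choosing $\iw=\vw$ forces $\sigma^{-1}$ to be an injective homomorphism $K\to G$, hence by the key observation $\sigma\in\Aut K$. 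For the $\Gamma$-part, for any $\aw$ with $g_\aw\in F(K)$ we have $(K,\emptyset,\aw)\in\Cat$, so $u^{\otimes k}\xi^\aw_K=\xi^\aw_K$; combining with $v^{\otimes k}\xi^\aw_K=\xi^\aw_K$ gives $\gamma^{\otimes k}\xi^\aw_K=\xi^\aw_K$, and evaluating at $\iw=\aw$ (where $\xi^\aw_K$ is nonzero, the identity automorphism contributing) gives $\gamma_{a_1}\cdots\gamma_{a_k}=1$, i.e.\ the relations of $\Gamma=\Z_2^{*n}/F(K)$. The semidirect (rather than free) product structure is inherited from the ambient $\widehat{\Z_2^{*n}}\rtimes S_n$, and it is consistent because $F(K)$ is $\Aut K$-invariant by axiom~(F2), so $\Aut K$ genuinely acts on~$\Gamma$.

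For the reverse inclusion I would show $\Cat^G\subset\RCat_{\hat\Gamma\rtimes\Aut K}$. By Proposition~\ref{P.ThatLB2} and Frobenius reciprocity it suffices to check that each $\xi^\aw_K$ with $g_\aw\in F(K)$ is fixed by $\tilde u^{\otimes k}$, where $\tilde u=\tilde\gamma\tilde v$ is the fundamental representation of $\hat\Gamma\rtimes\Aut K$. The $\Aut K$-invariance of $\xi^\aw_K$ gives $\tilde v^{\otimes k}\xi^\aw_K=\xi^\aw_K$, while for every nonzero entry one has $\iw=\phi(\aw)$ for some $\phi\in\Aut K$, so $g_\iw=\phi(g_\aw)\in F(K)$ again by~(F2), whence $\tilde\gamma_{i_1}\cdots\tilde\gamma_{i_k}=1$ in $\C\Gamma$ and $\tilde\gamma^{\otimes k}\xi^\aw_K=\xi^\aw_K$. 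Combining the two inclusions gives $\RCat_\GGr=\Cat^G=\RCat_{\hat\Gamma\rtimes\Aut K}$, and hence $\GGr=\hat\Gamma\rtimes\Aut K$ by Woronowicz--Tannaka--Krein.
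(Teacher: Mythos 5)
Your proof is correct and follows essentially the same route as the paper's: containment in $\widehat{\Z_2^{*n}}\rtimes S_n$ via $\Ggrpth\in\Cat$, the fork/merge trick to extract $v^{\otimes n}\xi^\vw_K=\xi^\vw_K$, evaluation at $\iw=\aw$ for the $\Gamma$-relations, and the reverse inclusion via the basis of Proposition~\ref{P.ThatLB2}. Your ``key observation'' (every injective homomorphism $K\to G$ is an automorphism of $K$) is precisely what the paper packages as Lemma~\ref{L.greatestK} together with Remark~\ref{R.Kvert} and invokes through Proposition~\ref{P.G}; your maximality-plus-edge-counting proof of it is a valid alternative to the paper's union-of-images argument. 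One slip worth flagging: $\Ggrpth$ is a \emph{two}-block partition (its underlying graph is $N_2$, not a single vertex), so injectivity is not vacuous and $\hat T^G_{\Ggrpth}\neq T^{(n)}_{\Ggrpth}$ in general. The conclusion $\GGr\subset\widehat{\Z_2^{*n}}\rtimes S_n$ nevertheless stands, e.g.\ because every skew graph category also contains the one-block quotient of $\Ggrpth$ (an $f$-union of $\Gpair$, $\Gid$, $\Guppair$ with all vertices identified), and by Proposition~\ref{P.Moebius} the genuine partition intertwiner $T^{(n)}_{\Ggrpth}$ is the sum of the $\hat T$'s of $\Ggrpth$ and of that quotient, hence lies in $\Cat^G$; this is the content behind the paper's bare assertion that $\Ggrpth\in\Cat$ forces $\GGr\subset\Z_2^{*n}\rtimes S_n$.
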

\begin{proof}
Again, we need to fix the inclusion $\iota\colon K\to G$, so that we can identify the vertices of $K$ with the vertices of $G$, i.e.\ $V:=\{\lv_1,\dots,\lv_n\}:=V(K)=V(G)$. The proof now closely follows the proof of Theorem~\ref{T.Geasy}. Denote by~$\GGr$ the quantum group corresponding to~$\Cat$. Again, we must have $\GGr\subset\Z_2^{*n}\rtimes S_n$ as $\Ggrpth\in\Cat$. So, denote $u=\gamma v$ the fundamental representation of~$\GGr$.

To prove the inclusion $\GGr\subset\hat\Gamma\rtimes\Aut K$, we have to show that the generators~$\gamma_i$ satisfy the relations of~$\Gamma$ and that $v_{ij}$ satisfy the relations of $\Aut K$. For the first part, the proof is completely the same as in case of Theorem~\ref{T.Geasy}.

For the second part, put $\tilde\vw=(\lv_1\lv_1\lv_2\lv_2\cdots\lv_m\lv_m)$. We surely have $e=g_{\tilde\vw}\in F(K)$, so $\tilde\KG:=(K,\emptyset,\tilde\vw)\in\Cat(0,2m)$. Denote also $\KG:=(K,\emptyset,\vw)$ with $\vw=(\lv_1\lv_2\cdots \lv_m)$. We can write
$$\tilde\KG=\Gfork\!\!^{\otimes m}\cdot\KG,$$
so the relation associated to $\tilde\KG\in\Cat(0,2m)$ can be written as
$$(u\otimes u)^{\otimes m}T_{\Gfork}^{\otimes m}\xi^\vw=T_{\Gfork}^{\otimes m}\xi^\vw,$$
where we denote $\xi^\vw=\hat T^G_{(K,\emptyset,\vw)}$ and $T_{\Gfork}=T^G_{\Gfork}=\hat T^G_{\Gfork}$.

Recall that $v_{ij}=v_{ij}^2=u_{ij}^2$ and hence $v=T_{\Gmerge}(u\otimes u)T_{\Gfork}$. Thus, multiplying the above relation by $T_{\Gmerge}^{\otimes m}$ from the right, we obtain
$$v^{\otimes m}\xi^\vw=\xi^\vw.$$
As we already showed in the proof of Proposition~\ref{P.G}, this is exactly the defining relation for the group $\Aut K$.

To show the opposite inclusion, denote by $\tilde u=\tilde\gamma\tilde v$ the fundamental representation of $\hat\Gamma\rtimes\Aut K$. We need to show that $\tilde u$ satisfies the relations corresponding to the intertwiners from~$\Cat^G$. Thanks to Proposition~\ref{P.ThatLB2}, we know the linear bases of the morphism spaces and we do not have to consider every possible $\HG\in\Cat$. So, we need to show that $\tilde u\xi^{\aw}=\xi^\aw$ for every $\aw\in V(K)^*$ such that $g_\aw\in F(K)$, where $\xi^\aw=\hat T^G_{(K,\emptyset,\aw)}$.

From Proposition~\ref{P.G}, we know that $\xi^\aw$ is an intertwiner of $\Aut K$, so we have
$$\tilde u^{\otimes k}\xi^\aw=\tilde\gamma^{\otimes k}\tilde v^{\otimes k}\xi^\aw=\tilde\gamma^{\otimes k}\xi^\aw,$$
where $k$ denotes the length of~$\aw$. It remains to show that $\tilde\gamma^{\otimes k}\xi^\aw=\xi^\aw$. This relation, rewritten in the tensor entries, reads
$$\tilde\gamma_{i_1}\cdots\tilde\gamma_{i_k}=1\quad\text{whenever $\exists\phi\colon K\to G$ inj.\ hom.\ such that $\phi(\aw)=\iw$.}$$
According to Lemma~\ref{L.greatestK}, the embedding of $K$ to $G$ is determined uniquely up to automorphisms of $K$. So, we must have $\phi\in\Aut K$ for the injective homomorphism above. So, we need to show that $\tilde\gamma_{\phi(a_1)}\cdots\tilde\gamma_{\phi(a_k)}=1$. Since $F(K)$ is supposed to be $\Aut K$-invariant, we have $\phi(g_\aw)\in F(K)$ and hence the mentioned relation is indeed satisfied in $\Z_2^{*V}/F(K)$.
\end{proof}

\section{Associating categories to semidirect product CMQGs}
\label{sec.conv}

In the previous text, we managed to determine the quantum groups associated to group-theoretical graph categories and skew categories. Namely it turned out that the quantum group is always of the form $\GGr=\hat\Gamma\rtimes\Aut K$. In this section, we would like to comment on the opposite direction: Given a~quantum group of such a~structure, what categories can we associate to it? What are the corresponding intertwiner spaces?

\subsection{Graph category associated to quantum group}
\begin{prop}
\label{P.conv}
Let $G$ be a graph and let $\Gamma$ be some $\Aut G$-invariant quotient of $\Z_2^{*V(G)}$. Then there exists a~skew graph category~$\Cat$ such that $\Cat^G$ forms the representation category associated to the quantum group $\GGr=\hat\Gamma\rtimes\Aut G$.
\end{prop}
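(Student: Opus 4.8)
The plan is to reduce the statement to Theorem~\ref{T.G} by exhibiting a graph fibration~$F$ whose greatest subgraph inside~$G$ is $G$ itself and whose fibre over~$G$ is exactly the kernel of the quotient map. Concretely, set $A:=\ker(\Z_2^{*V(G)}\to\Gamma)\trianglelefteq\Z_2^{*V(G)}$; since $\Gamma$ is by hypothesis $\Aut G$-invariant, $A$ is a normal, $\Aut G$-invariant subgroup. If I can produce a skew graph category~$\Cat$ whose associated fibration~$F$ satisfies $G\in F$ and $F(G)=A$, then Lemma~\ref{L.greatestK} forces the greatest subgraph of~$G$ lying in~$F$ to be $G$ itself (with $V(K)=V(G)$ by Remark~\ref{R.Kvert}), and Theorem~\ref{T.G} immediately yields that the quantum group corresponding to~$\Cat^G$ is $\hat\Gamma\rtimes\Aut G$ with $\Z_2^{*V(G)}/F(G)=\Z_2^{*V(G)}/A=\Gamma$. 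By uniqueness in Woronowicz--Tannaka--Krein this says precisely that $\Cat^G$ is the representation category of $\GGr=\hat\Gamma\rtimes\Aut G$.

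For the construction I would take $\Cat:=\langle S\rangle^{\text{skew}}$ generated by $S:=\{(G,\emptyset,\aw)\mid g_\aw\in A\}$ (together with the single-vertex graph~$N_1$, which is present in any skew graph category anyway). By Theorem~\ref{T.comb} this $\Cat$ corresponds to a graph fibration~$F$, and Proposition~\ref{P.skewgen} computes it explicitly: its fibres are the $f$-union closure of $\{N_0,N_1,G\}$, and for each such fibre~$K$ we have $F(K)=\llangle \iota(g_\aw)\mid g_\aw\in A,\ \iota\colon G\to K\text{ an embedding}\rrangle$. In particular $G\in F$.

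It then remains to check $F(G)=A$, which is the crux of the argument. Specializing the formula to $K=G$, the relevant embeddings $\iota\colon G\to G$ are injective homomorphisms of a finite graph into itself; such a map is a bijection on vertices and induces a bijection on the finite edge set, so its inverse is again a homomorphism, and hence every such~$\iota$ is in fact an automorphism of~$G$. Therefore $F(G)=\llangle \phi(g_\aw)\mid g_\aw\in A,\ \phi\in\Aut G\rrangle$. Since $A$ is $\Aut G$-invariant, $\phi(g_\aw)\in A$ for all $\phi\in\Aut G$, giving $F(G)\subseteq A$; conversely the identity embedding already contributes every element of~$A$, and as $A$ is normal it equals its own normal closure, so $A\subseteq F(G)$. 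Thus $F(G)=A$, and the reduction above completes the proof.

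The main obstacle is exactly this last equality: one must rule out that $F(G)$ is inadvertently enlarged, either by contributions of other graphs feeding into the fibre over~$G$ through $f$-unions, or by forming the normal closure. Proposition~\ref{P.skewgen} controls the first issue, since only embeddings of the underlying generator graph~$G$ contribute to $F(G)$, while the two structural facts — that injective endomorphisms of a finite graph are automorphisms, and that $A$ is normal and $\Aut G$-invariant — control the second. Everything else is a direct invocation of Theorem~\ref{T.G}.
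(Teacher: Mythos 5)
Your proposal is correct and follows essentially the same route as the paper: the paper also takes $\Cat$ to be the skew graph category generated by $\{(G,\emptyset,\aw)\mid g_\aw\in A\}$, computes the associated fibration via Proposition~\ref{P.skewgen}, observes $F(G)=A$, and concludes via Theorem~\ref{T.G}. Your write-up is in fact slightly more careful on the one point the paper glosses over, namely that injective endomorphisms of the finite graph~$G$ are automorphisms, so that $\Aut G$-invariance and normality of~$A$ really give $F(G)=A$.
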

\begin{proof}
Let $A$ be the $\Aut G$-invariant normal subgroup of $\Z_2^{*V(G)}$ such that $\Gamma=\Z_2^{*V(G)}/A$. We construct~$\Cat$ to be the smallest skew category containing all the bilabelled graphs $(G,\emptyset,\aw)$ with $g_\aw\in A$.

Recall from Proposition \ref{P.skewgen} that we are able to explicitly construct the associated graph fibration~$F$: The fibres of~$F$ are formed by the closure of the set~$\{N_0,N_1,G\}$ under arbitrary $f$-unions. Each fibre is then defined by $F(H):=\{\iota_H(a)\mid a\in A;\;\iota_H\colon G\to H\ppen\;\text{inj.\ hom.}\}$. In particular, we have $F(G)=A$, so $\Gamma=\Z_2^{*V(G)}/F(G)$. Therefore, the quantum group corresponding to the above defined category is indeed $\GGr=\hat\Gamma\rtimes\Aut G$.
\end{proof}

\begin{rem}
\label{R.converse}
By choosing $G:=N_n$ the edgeless graph on $n$ vertices, we get the following statement:

\textit{Let $A_n$ be an $S_n$-invariant normal subgroup of $\Z_2^{*n}$. Denote $\Gamma:=\Z_2^{*n}/A_n$. Then there exists a skew graph category $\Cat$ containing edgeless graphs only such that $\Cat^G$ forms the representation category associated to the quantum group $\hat\Gamma\rtimes S_n$.}

This statement does not hold for skew categories of partitions as they were defined in \cite{Maa18}. That is, although the category $\Cat$ from the statement above contains edgeless graphs only, it might not correspond to any skew category of partitions, because it might not be closed under removing isolated vertices (see Example~\ref{E.new}). In the group language, there need not exist an $S_\infty$-invariant normal subgroup $A\trianglelefteq\Z_2^{*\infty}$ such that $A_n$ is a subgroup of $A$ generated by $n$ of the canonical generators. Compare with \cite[Theorem~4.20]{Maa18}.
\end{rem}

\begin{rem}
The above mentioned proposition can be easily generalized replacing $\Aut G$ by $\Aut K$, but $K$ is not allowed arbitrary. It must hold that if $F$ is the closure of $\{N_0,N_1,K\}$ under arbitrary $f$-unions, then $K$ must be the greatest subgraph of $G$ contained in $F$. This condition can be equivalently formulated as follows: $K$~is a subgraph of~$G$ with $|V(K)|=|V(G)|$ and, for every $\phi\in\Aut G$, if $\phi(K)$ is a subgraph of $G$ (taking the same embedding), then $\phi\in\Aut K$.
\end{rem}

\begin{rem}
The graph category corresponding to a~given quantum group $\GGr=\hat\Gamma\rtimes\Aut G$ is not determined uniquely.  For example, the group $\GGr=\Aut G$ is known to be associated with the category of all graphs (which is also a~special case of Theorems \ref{T.Geasy},~\ref{T.G}). However, the construction described in the proof above would yield a~different category: The fibres of the graph fibration would be determined as the completion of~$\{N_0,N_1,G\}$ under $f$-unions. Consequently, it would not be a~full graph fibration and it would not correspond to the category of all graphs.

We could also modify the proof of Proposition~\ref{P.conv} and construct a full graph fibration $F$ corresponding to $\GGr=\hat\Gamma\rtimes\Aut G$. Every graph $H$ would form a fibre of $F$ determined by exactly the same formula $F(H)=\{\iota_H(a)\mid a\in A;\ppen\;\iota_H\colon G\to H\ppen\;\text{inj.\ hom.}\}$ and $F(H)=E$ trivial if $H$ is not a subgraph of $G$. Such a collection would be obviously closed under injective graph homomorphisms, so it would indeed form a full graph fibration. Returning back to the example $\GGr=\Aut G$, this construction would lead to yet another graph category. 
\end{rem}

We can formulate an analogous statement also in the easy case. Let us denote by $\End G$ the monoid of graph homomorphisms $G\to G$.
\begin{prop}
\label{P.easyconv}
Consider a graph $G$ and let $\Gamma$ be some $\End G$-invariant quotient of~$\Z_2^{*V(G)}$. Then there exists a~group-theoretical graph category~$\Cat$ such that $\Cat^G$ forms the representation category associated to the quantum group $\GGr=\hat\Gamma\rtimes\Aut G$.
\end{prop}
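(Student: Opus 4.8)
The plan is to mirror the proof of Proposition~\ref{P.conv}, but to work with an \emph{easy} graph fibration rather than an arbitrary one; the crucial difference is that easiness forces closure under \emph{all} homomorphisms (Proposition~\ref{P.alteasy}), and this is precisely where the $\End G$-invariance of $\Gamma$ enters. Let $A\trianglelefteq\Z_2^{*V(G)}$ be the $\End G$-invariant normal subgroup with $\Gamma=\Z_2^{*V(G)}/A$. It suffices to produce an easy graph fibration~$F$ with $G\in F$ and $F(G)=A$: letting $\Cat$ be the group-theoretical graph category corresponding to~$F$ via Theorem~\ref{T.comb}, Theorem~\ref{T.Geasy} then identifies the quantum group associated to~$\Cat$ and~$G$ as $\hat\Gamma\rtimes\Aut G$ with $\Gamma=\Z_2^{*V(G)}/F(G)$.

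To build~$F$ I would define, for every graph~$K$,
$$F(K):=\llangle \phi(a)\mid a\in A,\ \phi\colon G\to K\ \text{a~homomorphism}\rrangle\trianglelefteq\Z_2^{*V(K)}.$$
Each $F(K)$ is a normal subgroup by construction, so by Proposition~\ref{P.alteasy} it is enough to check that this collection is closed under arbitrary homomorphisms. If $\psi\colon K\to K'$ is a homomorphism, then $\psi(\phi(a))=(\psi\circ\phi)(a)$ and $\psi\circ\phi\colon G\to K'$ is again a homomorphism, so $\psi(F(K))\subset F(K')$. Hence $F$ is an easy (indeed full) graph fibration, and in particular $G\in F$.

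Next I would verify $F(G)=A$, the only place the hypothesis on~$\Gamma$ is used. Taking $\phi=\id_G$ shows $A\subset F(G)$. Conversely, every generator $\phi(a)$ of $F(G)$ has $\phi\in\End G$ and $a\in A$, so by $\End G$-invariance $\phi(a)\in A$; since $A$ is normal, the entire normal subgroup they generate lies in~$A$, giving $F(G)\subset A$. Thus $F(G)=A$ and $\Gamma=\Z_2^{*V(G)}/F(G)$. It is exactly here that $\End G$-invariance, rather than mere $\Aut G$-invariance, is required: for a non-easy fibration only embeddings arise and $\Aut G$-invariance suffices (as in Proposition~\ref{P.conv}), whereas easiness brings all of $\End G$ into play.

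Finally I would assemble the pieces. Let $\Cat$ be the group-theoretical graph category corresponding to~$F$. Since $\Cat$ is group-theoretical, Proposition~\ref{P.CatGeq} gives $\Cat^G=\spanlin\{T^G_\KG\mid\KG\in\Cat\}$, so $\Cat^G$ really is the representation category whose quantum group Theorem~\ref{T.Geasy} computes. Applying Theorem~\ref{T.Geasy} with $G\in F$ then yields $\GGr=\hat\Gamma\rtimes\Aut G$ with $\Gamma=\Z_2^{*V(G)}/F(G)=\Z_2^{*V(G)}/A$, as claimed. I do not anticipate a serious obstacle; the only genuinely substantive points are recognizing that easiness must be encoded as closure under all homomorphisms (so that $\End G$-invariance is the correct hypothesis making $F(G)=A$) and invoking Proposition~\ref{P.CatGeq} to reconcile the $\hat T$-definition of $\Cat^G$ with the $T$-based representation category underlying Theorem~\ref{T.Geasy}.
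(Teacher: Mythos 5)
Your proof is correct and follows essentially the same route as the paper: construct an easy full graph fibration by pushing $A$ forward along all homomorphisms $G\to K$, verify it via Proposition~\ref{P.alteasy}, use $\End G$-invariance to obtain $F(G)=A$, and conclude with Theorem~\ref{T.Geasy}. Your use of the normal closure $\llangle\cdot\rrangle$ (and the explicit appeal to Proposition~\ref{P.CatGeq} to reconcile the $\hat T$-based definition of $\Cat^G$ with the $T$-based category in Theorem~\ref{T.Geasy}) is if anything slightly more careful than the paper's set-theoretic formula $F(H)=\{\phi(a)\}$, but the argument is the same.
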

\begin{proof}
Let $A$ be the $\End G$-invariant normal subgroup of $\Z_2^{*V(G)}$ such that $\Gamma=\Z_2^{*V(G)}/A$. We construct an easy full graph fibration by putting $F(H):=\{\phi(a)\mid a\in A;\ppen\;\phi\colon G\to H\ppen\;\text{homomorphism}\}$ (cf.\ Prop.~\ref{P.grpthgen}) and $F(H):=E$ (the trivial group) if there is no homomorphism $G\to H$. Such a collection is certainly closed under graph homomorphisms, so according to Proposition~\ref{P.alteasy} it must be a full easy graph fibration.

Since $A$ is $\End G$-invariant, we have that $F(G)=A$, so $\Gamma=\Z_2^{*V(G)}/F(G)$ and hence the above defined graph fibration corresponds to the quantum group $\GGr=\hat\Gamma\rtimes\Aut G$.
\end{proof}

However, even in the easy case, the category is not determined uniquely. Take, for example, the graph~$G=\Gedge$ of two connected vertices, so $\Aut G=S_2$ and take $\Gamma=\Z_2\times\Z_2$. So we are looking for a~group-theoretical graph category~$\Cat$ such that $\Cat^G$ corresponds to $\GGr:=\hat\Gamma\rtimes\Aut G=\widehat{(\Z_2\times\Z_2)}\rtimes S_2=H_2$ -- the hyperoctahedral group. The following three categories are mutually distinct, but all of them correspond to~$\GGr$:
$$
\langle\Ggrpth,\Gabab\rangle,\qquad
\langle\Ggrpth,
\Graph{
\GS 1/0:2/1,2/3,3/2,3/4;
\GV 1:2,3;
}\rangle,\qquad
\langle\Ggrpth,
\Graph{
\GE 1/1:1/2;
\GS 1/0:1/0.7,1/1.3,2/1.7,2/2.3;
\GV 1:1,2;
},
\Graph{
\GS 1/0:2/1,2/3,3/2,3/4;
\GV 1:2,3;
}\rangle.$$
The first one is the category constructed in the proof of Proposition~\ref{P.easyconv} -- the smallest full category. We have encountered it already in Examples~\ref{E.abab}, \ref{E.abab2},~\ref{E.abab3}. The second one is the group-theoretical partition category corresponding to $\hat\Gamma\rtimes S_2$. The last one is generated by the previous two.

\subsection{Computing intertwiners}

In this subsection, we look on the problem of associating the intertwiner spaces
$$\RCat_\GGr(k,l)=\Mor(u^{\otimes k},u^{\otimes l})=\{T\colon(\C^n)^{\otimes k}\to(\C^n)^{\otimes l}\mid Tu^{\otimes k}=u^{\otimes l}T\}$$
to a~given quantum group $\GGr=(O(\GGr),u)$ of the form $\GGr=\hat\Gamma\rtimes\Aut G$.

First of all, let us mention what is, in our opinion, a~serious disadvantage of graph categories in comparison with categories of partitions: The morphism spaces $\Cat(k,l)$ of a~given graph category are typically infinite. This also means that the functor $\KG\mapsto T^G_\KG$ is highly non-injective.

As a~consequence, if we want to construct a~representation category $\Cat^G(k,l):=\spanlin\{T^G_{\KG}\mid \KG\in\Cat(k,l)\}$ (interpreting it afterwards as a~representation category~$\RCat_\GGr$ of some quantum group~$\GGr$), then it might be highly non-trivial to explicitly compute what those spaces actually are (for some given $k,l$) since we are making a~linear span over an infinite number of elements. In addition, in many applications, we need linear independence, that is, we really need to know a~basis of the intertwiner spaces (see e.g.\ \cite{Fre19survey,Jun19}).

Fortunately, this issue was solved in case of group-theoretical and skew graph categories in Section~\ref{secc.That}: Considering the quantum group $\GGr=\hat\Gamma\rtimes\Aut G$, we know that the basis of $\RCat_\GGr(k,l)$ is exactly the set $\{\hat T^{\aw\bw}_G\}_{[\aw,\bw]\in W_G(k,l)}$ (recall Prop.~\ref{P.ThatLB}). In the following section, we formulate a generalization of such statement replacing the group $\Aut G$ with any permutation group.

\begin{rem}
This disadvantage of graph categories is in general unavoidable. In particular, it is not possible to give such a formula for the intertwiner spaces for the so-called quantum automorphism group of graphs (determined by the category of all planar bilabelled graphs) as this would solve the quantum isomorphism problem, which was proven to be undecidable~\cite{AMR19}.
\end{rem}

\subsection{Intertwiners corresponding to any permutation group}
In this section, we generalize the results of Section~\ref{sec.QG} replacing the automorphism group $\Aut G$ or $\Aut K$ by any permutation group $H\subset S_n$. However, in this case we do not have such a diagrammatic calculus (at least not such a simple one given by bilabelled graphs) since the representation category of $H$ may not be generated by a single matrix $A\colon\C^n\to\C^n$.

\begin{defn}
Consider $H\subset S_n$. Considering $\aw\in\{1,\dots,n\}^k$, $\bw\in\{1,\dots,n\}^l$, we denote by $\hat T_H^{\aw\bw}\colon (\C^n)^{\otimes k}\to(\C^n)^{\otimes l}$ the linear mapping with entries
$$[\hat T_H^{\aw\bw}]_{\jw\iw}=\#\{\phi\in H\mid\phi(\aw)=\iw,\phi(\bw)=\jw\}.$$
\end{defn}

Note that given a graph $G$, we have $\hat T^{\aw\bw}_{\Aut G}=\hat T^{\aw\bw}_G=\hat T^G_{(G,\aw,\bw)}$. Moreover, we have the following.

\begin{lem}
\label{L.THpart}
Fix $n\in\N$ and $H\subset S_n$. For any partition $\PG\in\Part(k,l)$, we have
$$\hat T_\PG^{(n)}=\frac{1}{|H|}\sum_{\substack{\aw,\bw\in\{1,\dots,n\}^*\\\PG=\ker(\aw,\bw)}}\hat T_H^{\aw\bw}$$
\end{lem}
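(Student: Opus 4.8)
The plan is to verify the claimed identity entrywise. Fixing multiindices $\iw\in\{1,\dots,n\}^k$ and $\jw\in\{1,\dots,n\}^l$, I would compute the $(\jw,\iw)$ entry of the right-hand side, which by the definition of $\hat T_H^{\aw\bw}$ unwinds to
$$\frac{1}{|H|}\sum_{\substack{\aw,\bw\\\PG=\ker(\aw,\bw)}}\#\{\phi\in H\mid\phi(\aw)=\iw,\;\phi(\bw)=\jw\}.$$
I would read this as $1/|H|$ times the number of triples $(\aw,\bw,\phi)$ with $\phi\in H$, $\ker(\aw,\bw)=\PG$, $\phi(\aw)=\iw$, and $\phi(\bw)=\jw$, where implicitly $\aw$ has length $k$ and $\bw$ has length $l$.

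The key observation is that for each fixed $\phi\in H$ the pair of words is determined uniquely: since $\phi$ is a bijection of $\{1,\dots,n\}$, the equations $\phi(\aw)=\iw$ and $\phi(\bw)=\jw$ force $\aw=\phi^{-1}(\iw)$ and $\bw=\phi^{-1}(\jw)$ entry by entry. Thus, for each $\phi$, there is exactly one candidate pair $(\aw,\bw)$, and the sum over words collapses to a single term, which contributes $1$ precisely when the constraint $\ker(\aw,\bw)=\PG$ holds for that forced pair.

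The step I expect to matter most is handling this kernel constraint, and it is settled by a short bijectivity argument. Because $\phi^{-1}$ is a permutation, applying it to the entries of $(\iw,\jw)$ preserves the pattern of coincidences among positions, so two positions carry equal values in $(\phi^{-1}(\iw),\phi^{-1}(\jw))$ if and only if they do in $(\iw,\jw)$; that is, $\ker(\phi^{-1}(\iw),\phi^{-1}(\jw))=\ker(\iw,\jw)$. Hence the requirement $\ker(\aw,\bw)=\PG$ is equivalent to $\ker(\iw,\jw)=\PG$ and does not depend on $\phi$ at all. Consequently the number of valid triples is $|H|$ when $\ker(\iw,\jw)=\PG$ and $0$ otherwise.

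Dividing by $|H|$, the $(\jw,\iw)$ entry of the right-hand side therefore equals $1$ exactly when $\PG=\ker(\iw,\jw)$ and $0$ otherwise, which is precisely $\hat\delta_\PG^{(n)}(\iw,\jw)=[\hat T_\PG^{(n)}]_{\jw\iw}$. Since $\iw$ and $\jw$ were arbitrary, the two matrices agree, proving the identity. No serious obstacle arises; the only point that requires care is the bijectivity argument transferring the kernel condition from the summation words to the fixed indices, which is exactly where the normalization by $|H|$ comes from.
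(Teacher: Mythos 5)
Your proof is correct and follows essentially the same route as the paper: an entrywise computation using the fact that permutations preserve kernels, so that the valid terms in the sum are in bijection with the elements of $H$ (the paper fixes the pair $(\aw,\bw)$ and counts the permutations, while you fix $\phi$ and recover the unique pair, but this is the same double-counting argument). No gaps; the identification of $[\hat T_\PG^{(n)}]_{\jw\iw}$ with the indicator of $\PG=\ker(\iw,\jw)$ matches the paper's definition exactly.
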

\begin{proof}
Let us look on the entries of the right-hand side
$$\sum_{\substack{\aw,\bw\in\{1,\dots,n\}^*\\\PG=\ker(\aw,\bw)}}[\hat T_H^{\aw\bw}]_{\jw\iw}=\sum_{\substack{\aw,\bw\in\{1,\dots,n\}^*\\\PG=\ker(\aw,\bw)}}\#\{\phi\in H\mid \phi(\aw,\bw)=(\iw,\jw)\}.$$
Assuming that $\phi(\aw,\bw)=(\iw,\jw)$, then obviously $\PG=\ker(\aw,\bw)$ if and only if $\PG=\ker(\iw,\jw)$. So, the above equals to zero whenever $\PG\neq\ker(\iw,\jw)$. On the other hand, assume that $\PG=\ker(\iw,\jw)$. Choose two pairs $(\aw_1,\bw_1)$ and $(\aw_2,\bw_2)$ such that $\PG=\ker(\aw_k,\bw_k)$, $k=1,2$. Choose also two permutations $\phi_1,\phi_2\in H$ such that $\phi_k(\aw_k,\bw_k)=(\iw,\jw)$. Then, if the pairs $(\aw_k,\bw_k)$ are distinct, the permutations $\phi_k$ must also be distinct. Consequently, the sum above counts every permutation in~$H$ exactly once, so the result must be equal to $|H|$.
\end{proof}

\begin{prop}
Consider $H\in S_n$ and $\aw,\bw,\cw,\dw\in\{1,\dots,n\}^*$. It holds that
\begin{enumerate}
\item $\displaystyle\hat T_H^{\aw\bw}\otimes\hat T_H^{\cw\dw}=\sum_{\eta\in H}T_H^{\aw\eta(\cw)\,\bw\eta(\dw)}$,
\item $\displaystyle\hat T_H^{\aw\bw}\hat T_H^{\cw\dw}=\sum_{\substack{\eta\in H\\\bw=\eta(\cw)}}T_H^{\aw\eta(\dw)}$ (assuming $\bw$ and $\cw$ have the same length),
\item $\displaystyle (\hat T_H^{\aw\bw})^*=T_H^{\bw\aw}$.
\end{enumerate}
In particular, the collection $\Cat^H(k,l):=\spanlin\{T_H^{\aw,\bw}\mid \aw\in\{1,\dots,n\},\bw\in\{1,\dots,n\}\}$ is a representation category
\end{prop}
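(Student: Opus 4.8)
The plan is to prove all three identities by writing out matrix entries and then exploiting the group structure of $H$ through the substitution $\eta:=\phi^{-1}\psi$, exactly in the spirit of the bijective argument behind Proposition~\ref{P.That}. Item~(3) needs no work: all entries of $\hat T_H^{\aw\bw}$ are non-negative integers, so taking the adjoint merely transposes the matrix, giving $[(\hat T_H^{\aw\bw})^*]_{\iw\jw}=[\hat T_H^{\aw\bw}]_{\jw\iw}=\#\{\phi\in H\mid \phi(\aw)=\iw,\ \phi(\bw)=\jw\}$, which is visibly $[\hat T_H^{\bw\aw}]_{\iw\jw}$.

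For item~(1) I would compute the $\big((\jw\jw'),(\iw\iw')\big)$-entry of the tensor product as a product of entries,
$$[\hat T_H^{\aw\bw}]_{\jw\iw}\,[\hat T_H^{\cw\dw}]_{\jw'\iw'}=\#\{(\phi,\psi)\in H\times H\mid \phi(\aw)=\iw,\ \phi(\bw)=\jw,\ \psi(\cw)=\iw',\ \psi(\dw)=\jw'\},$$
while the corresponding entry of $\hat T_H^{\aw\eta(\cw)\,\bw\eta(\dw)}$ counts the $\chi\in H$ with $\chi(\aw)=\iw$, $\chi(\eta(\cw))=\iw'$, $\chi(\bw)=\jw$, $\chi(\eta(\dw))=\jw'$. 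The key step is the bijection $(\phi,\psi)\mapsto(\chi,\eta):=(\phi,\phi^{-1}\psi)$, with inverse $(\chi,\eta)\mapsto(\phi,\psi):=(\chi,\chi\eta)$; one checks the two systems of conditions match term by term (e.g.\ $\chi(\eta(\cw))=\phi(\phi^{-1}\psi(\cw))=\psi(\cw)=\iw'$). Summing over $\eta\in H$ reproduces the left-hand count, giving~(1).

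For item~(2) the same device applies once the internal index is eliminated. Gluing the $\bw$-row of the first factor to the $\cw$-column of the second (which forces $|\bw|=|\cw|$), I would write the relevant entry of the composite as
$$\sum_{\mathbf k}[\hat T_H^{\cw\dw}]_{\jw\mathbf k}\,[\hat T_H^{\aw\bw}]_{\mathbf k\iw}=\#\{(\phi,\psi)\in H\times H\mid \phi(\aw)=\iw,\ \psi(\dw)=\jw,\ \phi(\bw)=\psi(\cw)\}.$$
Setting $\eta:=\phi^{-1}\psi$ turns the coupling $\phi(\bw)=\psi(\cw)$ into precisely $\bw=\eta(\cw)$, and for each such $\eta$ the remaining freedom in $\phi$ is counted by $[\hat T_H^{\aw\,\eta(\dw)}]_{\jw\iw}$; this yields the stated sum over $\{\eta\in H\mid \bw=\eta(\cw)\}$. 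The vanishing when $\ker\bw\neq\ker\cw$ is then automatic, since a permutation preserves the equality pattern of a word, so no $\eta$ can satisfy $\bw=\eta(\cw)$ unless the two words have the same kernel.

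Finally, to conclude that $\Cat^H$ is a representation category I would check the five axioms of the definition. Closure under $\otimes$, composition, and $*$ is exactly items~(1)–(3). For the unit data it remains to exhibit $1_n^{\otimes k}$ and $\sum_k e_k\otimes e_k$ inside the span: a one-line count using that every $\phi\in H$ is a bijection gives $\sum_{a=1}^n\hat T_H^{a\,a}=|H|\cdot 1_n$ and $\sum_{a=1}^n\hat T_H^{\emptyset\,aa}=|H|\sum_k e_k\otimes e_k$ (the latter also follows from Lemma~\ref{L.THpart} applied to the pair partition), so that $1_n\in\Cat^H(1,1)$ and the duality morphism lies in $\Cat^H(0,2)$; tensoring with~(1) then gives $1_n^{\otimes k}\in\Cat^H(k,k)$, and $\tfrac1{|H|}\hat T_H^{\emptyset\emptyset}=1\in\Cat^H(0,0)$. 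I expect the only genuine care-point to be the bookkeeping in~(2): keeping the roles of the input and output words straight and verifying that the substitution $\eta=\phi^{-1}\psi$ carries each defining condition to the intended one. All the mathematical content is simply the left-regular action of $H$ on itself.
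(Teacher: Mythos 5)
Your proof is correct and follows essentially the same route as the paper: entrywise counting combined with the substitution $\eta:=\phi^{-1}\circ\psi$ (the left-regular action of $H$ on itself) for items (1) and (2), transposition for (3), and then closure under the three operations together with exhibiting the identity and duality morphisms. The only cosmetic difference is that the paper gets $1_n$ and $\sum_k e_k\otimes e_k$ into $\Cat^H$ by citing Lemma~\ref{L.THpart} for $\Gid$ and $\Gpair$, while you verify the identities $\sum_a\hat T_H^{a\,a}=|H|\,1_n$ and $\sum_a\hat T_H^{\emptyset\,aa}=|H|\sum_k e_k\otimes e_k$ by a direct count, which is the same computation.
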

\begin{proof}
For the tensor product, we have by definition
$$[\hat T^{\aw\bw}_H\otimes\hat T^{\cw\dw}_H]_{\mathbf{ik\,jl}}=\#\{(\phi,\psi)\in H^2\mid \phi(\aw,\bw)=(\iw,\jw),\;\psi(\cw,\dw)=(\mathbf{k},\mathbf{l})\}.$$
Now, given such a pair $(\phi,\psi)$, we can denote by $\eta\in H$ their ``difference'', that is, $\eta:=\phi^{-1}\circ\psi$. Consequently, the above equals to
$$=\sum_{\eta\in H}\#\{\phi\in H\mid \phi(\aw,\bw)=(\iw,\jw),\;(\phi\circ\eta)(\cw,\dw)=(\mathbf{k},\mathbf{l})\}=\sum_{\eta\in H}[T_H^{\aw\eta(\cw)\,\bw\eta(\dw)}]_{\iw\mathbf{k}\,\jw\mathbf{l}}.$$

The proof of (2) goes similar way:
\begin{align*}
[\hat T^{\aw\bw}_H\hat T^{\cw\dw}_H]_{\iw\jw}
&=\sum_{\mathbf{k}}\#\{(\phi,\psi)\in H^2\mid \phi(\aw)=\iw,\;\phi(\bw)=\mathbf{k}=\psi(\cw),\;\psi(\dw)=\jw\}\\
&=\sum_{\eta\in H}\#\{\phi\in H\mid\phi(\aw)=\iw,\;\phi(\bw)=(\phi\circ\eta)(\cw),\;(\phi\circ\eta)(\dw)=\jw\}\\
&=\sum_{\substack{\eta\in H\\\bw=\eta(\cw)}}[T^{\aw\eta(\dw)}]_{\iw\jw}.
\end{align*}

Item (3) is clear directly from the definition.

From Lemma~\ref{L.THpart}, it follows that $\Cat^H$ contains the identity morphism $\hat T^{(n)}_{\Gid}$ as well as the duality morphism $\hat T^{(n)}_{\Gpair}$. Since it is also closed under tensor products, compositions and the involution, it must form a representation category.
\end{proof}

We again denote by $W_H(k):=\{1,\dots,n\}^k/H$ the set of equivalence classes of the tuples $\aw\in\{1,\dots,n\}^k$ with respect to the action of $H$. We denote the equivalence classes by $[\aw]$. Similarly, we denote by $W_H(k,l)$ the equivalence classes $[\aw,\bw]$ of pairs of tuples. Similarly to the operators associated to bilabelled graphs, the operators $\hat T^{\aw\bw}$ do not depend on the particular representative of the class $[\aw,\bw]$.

\begin{prop}
\label{P.intsp}
Let $H\subset S_n$ be a permutation group. Then $\RCat_H=\Cat^H$. That is, denoting by~$u$ the fundamental representation of $H$, we have
$$\Mor(u^{\otimes k},u^{\otimes l})=\spanlin\{\hat T^{\aw\bw}_H\mid [\aw,\bw]\in W_H(k,l)\}.$$
The sets of linear maps $\{\hat T^{\aw\bw}_H\}_{[\aw,\bw]\in W_H(k,l)}$ are linearly independent.
\end{prop}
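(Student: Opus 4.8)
The plan is to prove both assertions at once by identifying $\Mor(u^{\otimes k},u^{\otimes l})$ with the space of $H$-invariant tensors and recognising each $\hat T^{\aw\bw}_H$ as a nonzero scalar multiple of the indicator matrix of an orbit. Recall that $u$ evaluated at $\sigma\in H$ is the permutation matrix $A_\sigma$ with $[A_\sigma]_{ij}=\delta_{i\sigma(j)}$, so that $\Mor(u^{\otimes k},u^{\otimes l})=\{T\mid TA_\sigma^{\otimes k}=A_\sigma^{\otimes l}T\ \forall\sigma\in H\}$, exactly as for $\Aut G$. For the easy inclusion $\Cat^H(k,l)\subset\Mor(u^{\otimes k},u^{\otimes l})$ I would check directly that each $\hat T^{\aw\bw}_H$ is $H$-equivariant: substituting $\phi=\sigma\psi$ in the count defining $[\hat T^{\aw\bw}_H]_{\jw,\sigma(\iw)}$ turns it into $[\hat T^{\aw\bw}_H]_{\sigma^{-1}(\jw),\iw}$, which is precisely the entrywise form of the relation $\hat T^{\aw\bw}_H A_\sigma^{\otimes k}=A_\sigma^{\otimes l}\hat T^{\aw\bw}_H$.

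For the reverse inclusion I would translate the intertwiner relation into a condition on matrix entries. Using $[A_\sigma]_{ij}=\delta_{i\sigma(j)}$ one computes $[TA_\sigma^{\otimes k}]_{\jw\iw}=T_{\jw,\sigma(\iw)}$ and $[A_\sigma^{\otimes l}T]_{\jw\iw}=T_{\sigma^{-1}(\jw),\iw}$, so that $TA_\sigma^{\otimes k}=A_\sigma^{\otimes l}T$ is equivalent to $T_{\sigma(\jw),\sigma(\iw)}=T_{\jw,\iw}$ for all $\sigma\in H$ and all index pairs. In other words, $\Mor(u^{\otimes k},u^{\otimes l})$ is exactly the space of matrices whose entries are constant along the orbits of the diagonal action of $H$ on the index pairs $(\iw,\jw)\in\{1,\dots,n\}^k\times\{1,\dots,n\}^l$, and these orbits are precisely the classes in $W_H(k,l)$. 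Hence this space carries the orbit-indicator matrices as an obvious basis.

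It then remains to match the $\hat T^{\aw\bw}_H$ with these indicators. By definition $[\hat T^{\aw\bw}_H]_{\jw\iw}$ counts the $\phi\in H$ with $\phi(\aw,\bw)=(\iw,\jw)$; this vanishes unless $(\iw,\jw)$ lies in the $H$-orbit of $(\aw,\bw)$, and where it does not vanish it equals the order of the stabiliser of $(\aw,\bw)$, a constant on the whole orbit. Thus $\hat T^{\aw\bw}_H$ is a nonzero scalar multiple of the indicator matrix of the orbit $[\aw,\bw]$, and as $[\aw,\bw]$ runs over $W_H(k,l)$ we obtain exactly a basis of the orbit-constant matrices. This simultaneously gives the spanning equality $\Mor(u^{\otimes k},u^{\otimes l})=\Cat^H(k,l)$ and the linear independence, the latter also being immediate from the fact that distinct classes give indicators with disjoint supports (compare Lemma~\ref{L.ThatLI}).

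The conceptual heart, and the only place needing care, is the identification in the second paragraph: the commutant of a permutation action of a finite group on tensor powers of $\C^n$ has a basis of orbit indicators. I expect no genuine difficulty here, only the bookkeeping of keeping the conventions for $\sigma$ versus $\sigma^{-1}$ and the two index tuples $\iw$, $\jw$ straight.
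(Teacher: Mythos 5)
Your proof is correct, and it takes a genuinely different route from the paper's. You compute the commutant directly: the relation $TA_\sigma^{\otimes k}=A_\sigma^{\otimes l}T$ for all $\sigma\in H$ is equivalent to the entries of $T$ being constant on the $H$-orbits of index pairs, so $\Mor(u^{\otimes k},u^{\otimes l})$ has the orbit-indicator matrices as a basis; since $\hat T^{\aw\bw}_H$ is the indicator of the orbit of $(\aw,\bw)$ scaled by the (nonzero) order of its stabiliser in $H$, both the spanning statement and the linear independence drop out at once. The paper instead runs the argument through its quantum-group machinery: it relies on the preceding proposition that $\Cat^H$ is a representation category, lets $\GGr$ be the Woronowicz--Tannaka--Krein quantum group with $\RCat_\GGr=\Cat^H$, uses Lemma~\ref{L.THpart} to see that $\Cat^H$ contains the partition intertwiners so that $\GGr$ is an honest permutation group inside $S_n$, and then pins down $\GGr=H$ by checking which permutation matrices satisfy the relations (the inclusion $\GGr\subset H$ coming from the test intertwiner $\xi^\aw$ with $\aw=(1,\dots,n)$, exactly as in Proposition~\ref{P.G}); linear independence is a separate disjoint-support argument as in Lemma~\ref{L.ThatLI}. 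Your route is more elementary and self-contained---it needs neither Tannaka--Krein, nor the previous proposition, nor Lemma~\ref{L.THpart}---and it makes explicit the classical fact that the commutant of a permutation representation on tensor powers is spanned by orbit indicators. What the paper's route buys is the identification of the group $\GGr=H$ itself and uniformity with the rest of the section: the same template is reused in Proposition~\ref{P.G} and, crucially, in Theorem~\ref{T.intsp}, where the object to be identified is a genuine quantum group and a bare commutant computation of this kind would no longer suffice.
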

\begin{proof}
The linear independence is proven the same way as in Lemma~\ref{L.ThatLI}. It follows from the fact that $[\hat T^{\aw\bw}_H]_{\jw\iw}[\hat T^{\aw'\bw'}_H]_{\jw\iw}=0$ unless $[\aw,\bw]=[\aw',\bw']$ in $W_H(k,l)$.

To show that $\RCat_H=\Cat^H$, we can essentially copy the proof of Proposition~\ref{P.G} replacing $\Aut K$ by $H$. To prove the inclusion $\supset$, take any $\sigma\in H$ and show that the associated permutation matrix $A_\sigma$ satisfy the intertwiner relations $A_\sigma\xi^\aw=\xi^\aw$ with $\xi^\aw=\hat T_H^{\emptyset\aw}$. This is equivalent with saying that
$$\#\{\phi\in H\mid \phi(\aw)=\iw\}=\#\{\phi\in H\mid (\sigma\circ\phi)(\aw)=\iw\},$$
which is obviously true.

To prove the opposite inclusion, it is enough to check that taking any $\sigma\in S_n$ satisfying the intertwiner relations of $\Cat^H$, we necessarily have $\sigma\in H$. We get this by choosing $\aw=\iw=(1,\dots,n)$ since then
\begin{align*}
[\xi^\aw]_\aw&=\{\phi\in H\mid\phi=\id\}=1\\
[A_\sigma\xi^\aw]_\aw&=\{\phi\in H\mid \sigma\circ\phi=\id\}=
\begin{cases}
1&\text{if $\sigma^{-1}\in H$}\\
0&\text{otherwise}
\end{cases}\qedhere
\end{align*}
\end{proof}

\begin{thm}
\label{T.intsp}
Let $H\subset S_n$ be a permutation group and consider $\Gamma=\Z_2^{*n}/A$ with $A\trianglelefteq\Z_2^{*n}$ being $H$-invariant. Denote by~$u$ the fundamental representation of the quantum group $\GGr:=\hat\Gamma\rtimes H$. Then
$$\Mor(u^{\otimes k},u^{\otimes l})=\spanlin\{\hat T^{\aw\bw}_H\mid [\aw,\bw]\in W_H(k,l)\text{ such that }g_{\aw\bw^*}\in A\}.$$
The sets of linear maps $\{\hat T^{\aw\bw}_H\}_{[\aw,\bw]\in W_H(k,l)}$ are linearly independent.
\end{thm}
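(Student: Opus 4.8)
The plan is to follow the scheme of the proof of Theorem~\ref{T.G}, replacing $\Aut K$ by the abstract permutation group~$H$ and the bilabelled-graph intertwiners by the operators $\hat T^{\aw\bw}_H$, with the role of Proposition~\ref{P.G} now played by Proposition~\ref{P.intsp}. The linear independence claim is already contained in Proposition~\ref{P.intsp}, since it follows from $[\hat T^{\aw\bw}_H]_{\jw\iw}[\hat T^{\aw'\bw'}_H]_{\jw\iw}=0$ unless $[\aw,\bw]=[\aw',\bw']$, which is insensitive to~$A$; so the work is to identify the span. For the set-up I would record the semidirect-product structure: since $H\subset S_n$ and $\Gamma$ is a quotient of $\Z_2^{*n}$, the quantum group $\GGr$ is a quantum subgroup of $\Z_2^{*n}\rtimes S_n$, so $u=\gamma v$ with $\gamma=\diag(\gamma_1,\dots,\gamma_n)$, the $\gamma_i\in\C\Gamma\subset O(\GGr)$ commuting with all $v_{jk}$, and $v$ the fundamental representation of~$H$. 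Two structural facts will be used throughout: $O(\GGr)=\C\Gamma\otimes O(H)$ is $\Gamma$-graded by the $\C\Gamma$-factor, and $u^{\otimes k}=\gamma^{\otimes k}v^{\otimes k}$ with $\gamma^{\otimes k}$ the diagonal matrix carrying $\gamma_\iw:=\gamma_{i_1}\cdots\gamma_{i_k}$ on its diagonal; moreover $v$ is orthogonal, $vv^t=v^tv=1$, so $v^{\otimes k}$ is invertible over $O(H)$. The whole argument then turns on one observation: if $[\hat T^{\aw\bw}_H]_{\jw\mathbf{p}}\neq 0$ then $(\mathbf{p},\jw)=(\psi(\aw),\psi(\bw))$ for some $\psi\in H$, whence $\gamma_\mathbf{p}\gamma_\jw^{-1}$ is the image of $\psi(g_{\aw\bw^*})$ in $\Gamma$, which equals $1$ exactly when $g_{\aw\bw^*}\in A$ (using that $A$ is normal and $H$-invariant, so $\psi(g_{\aw\bw^*})\in A\Leftrightarrow g_{\aw\bw^*}\in A$).

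For the inclusion $\supseteq$ I would fix $[\aw,\bw]$ with $g_{\aw\bw^*}\in A$ and verify $\hat T^{\aw\bw}_H\in\Mor(u^{\otimes k},u^{\otimes l})$ by a direct entry computation. Using that $\hat T^{\aw\bw}_H$ is already an intertwiner of~$v$ (Proposition~\ref{P.intsp}), one gets $[u^{\otimes l}\hat T^{\aw\bw}_H]_{\jw\iw}=\gamma_\jw\sum_\mathbf{p}[\hat T^{\aw\bw}_H]_{\jw\mathbf{p}}[v^{\otimes k}]_{\mathbf{p}\iw}$, whereas $[\hat T^{\aw\bw}_H u^{\otimes k}]_{\jw\iw}=\sum_\mathbf{p}[\hat T^{\aw\bw}_H]_{\jw\mathbf{p}}\gamma_\mathbf{p}[v^{\otimes k}]_{\mathbf{p}\iw}$; on the support of these sums the observation gives $\gamma_\mathbf{p}=\gamma_\jw$, so the two expressions coincide.

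For the reverse inclusion $\subseteq$ I would take $T\in\Mor(u^{\otimes k},u^{\otimes l})$, apply the $*$-homomorphism $O(\GGr)\to O(H)$, $\gamma_i\mapsto 1$, to the relation $Tu^{\otimes k}=u^{\otimes l}T$ to obtain $Tv^{\otimes k}=v^{\otimes l}T$, i.e.\ $T\in\Mor(v^{\otimes k},v^{\otimes l})$, and then expand $T=\sum_{[\aw,\bw]}c_{[\aw,\bw]}\hat T^{\aw\bw}_H$ by Proposition~\ref{P.intsp}. Splitting $T=T_0+T_1$ into the admissible part $T_0$ (those $[\aw,\bw]$ with $g_{\aw\bw^*}\in A$) and the bad part $T_1$, the inclusion $\supseteq$ already gives $T_0\in\Mor(u)$, hence $T_1=T-T_0\in\Mor(u)$. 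Now I project the relation $T_1 u^{\otimes k}=u^{\otimes l}T_1$ onto the $\Gamma$-grades of $O(\GGr)$: the right-hand side $\gamma_\jw\otimes[T_1 v^{\otimes k}]_{\jw\iw}$ lives in grade $\gamma_\jw$, while every nonzero contribution to the left-hand side comes from a bad orbit and hence sits in some grade $\gamma_\mathbf{p}\neq\gamma_\jw$. Reading off the grades other than $\gamma_\jw$ yields $[T_1 v^{\otimes k}]_{\jw\iw}=0$ for all $\jw,\iw$, that is $T_1 v^{\otimes k}=0$; multiplying by $(v^t)^{\otimes k}$ and using orthogonality gives $T_1=0$, so $T=T_0$ lies in the claimed span.

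The main obstacle is the bookkeeping in this last step: one must set up the $\Gamma$-grading of $O(\GGr)=\C\Gamma\otimes O(H)$ precisely and check that the admissible and non-admissible basis operators genuinely decouple across distinct grades (this is exactly where the dichotomy $\gamma_\mathbf{p}=\gamma_\jw$ versus $\gamma_\mathbf{p}\neq\gamma_\jw$ from the observation is essential), and only then may one cancel the invertible factor $v^{\otimes k}$. Everything else is a routine transcription of the arguments already used for Proposition~\ref{P.G} and Theorem~\ref{T.G}.
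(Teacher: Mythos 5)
Your proof is correct, but it takes a genuinely different route from the paper's. The paper treats $\spanlin\{\hat T^{\aw\bw}_H\mid g_{\aw\bw^*}\in A\}$ as a representation category, invokes Woronowicz--Tannaka--Krein to obtain a quantum group $\GGr$, and then identifies $\GGr=\hat\Gamma\rtimes H$ by repeating the two quantum-subgroup inclusions from the proof of Theorem~\ref{T.G}: Lemma~\ref{L.THpart} supplies the ``partition tricks'' giving $\GGr\subset\hat\Z_2^{*n}\rtimes S_n$, hence $u=\gamma v$, after which one derives the relations of $\Gamma$ on the $\gamma_i$ and of $H$ on the $v_{ij}$, and conversely checks that the fundamental representation of $\hat\Gamma\rtimes H$ satisfies all the intertwiner relations. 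You never reconstruct a quantum group from the span: you prove the equality of $\Mor$-spaces directly. Your inclusion $\supseteq$ is essentially the paper's converse inclusion in disguise (each admissible $\hat T^{\aw\bw}_H$ intertwines $u=\gamma v$ because $\gamma_{\mathbf{p}}=\gamma_{\jw}$ on its support), but your inclusion $\subseteq$ replaces the paper's generators-and-relations step: you project onto $O(H)$ via $\gamma_i\mapsto 1$, expand an arbitrary intertwiner in the basis of Proposition~\ref{P.intsp}, and annihilate the non-admissible part by comparing homogeneous components in the $\Gamma$-grading of $O(\GGr)=\C\Gamma\otimes O(H)$. This buys two things: Tannaka--Krein reconstruction is avoided entirely, and you never need the (implicitly required, and not entirely trivial) verification that the admissible span is closed under composition and tensor product --- closure comes for free once the span is identified with $\Mor(u^{\otimes k},u^{\otimes l})$. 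The paper's route, in exchange, reuses the machinery of Theorem~\ref{T.G} nearly verbatim and stays within the general scheme of the article, while your grading trick is special to the semidirect-product situation where the $\gamma_i$ commute with the $v_{jk}$ and $\C\Gamma$ has its group basis. One phrasing slip, which is harmless: in the grading step you should read off the grade $\gamma_\jw$ itself (or simply note that the two sides of $T_1u^{\otimes k}=u^{\otimes l}T_1$ lie in complementary sets of grades, so both vanish); reading off ``the grades other than $\gamma_\jw$'' literally yields $[T_1u^{\otimes k}]_{\jw\iw}=0$, from which the desired $T_1v^{\otimes k}=0$ still follows, so the argument stands.
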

\begin{proof}
We denote by $\GGr$ the quantum group corresponding to the representation category $\Cat^H$ and show that $\GGr=\hat\Gamma\rtimes H$. The proof is basically the same as in case of Theorem \ref{T.G}. In particular, thanks to Lemma~\ref{L.THpart}, we can do the ``partition tricks'' also here. For instance, we immediatelly know that $\GGr\subset\hat\Z_2^{*n}\rtimes S_n$ since $\hat T_{\Ggrpth}^{(n)}\in\Cat^H$. The linear independence was proven in Proposition~\ref{P.intsp}.
\end{proof}

\bibliographystyle{halpha}
\bibliography{mybase}

\end{document}